\numberwithin{equation}{section}
\numberwithin{figure}{section}
\theoremstyle{plain}
\newtheorem{thm}{\protect\theoremname}[section]
\DeclareMathOperator*{\Ex}{\mathbb{E}}
\DeclareMathOperator*{\Pro}{\mathbb{P}}
\newtheorem{cor}[thm]{Corollary}%{\protect\Corollaryname}
\newtheorem*{thm*}{\protect\theoremname}
\theoremstyle{definition}
\newtheorem{problem}[thm]{\protect\problemname}
\newtheorem*{problem*}{Problem}
\theoremstyle{remark}
\newtheorem*{rem*}{\protect\remarkname}
\theoremstyle{remark}
\newtheorem{rem}[thm]{\protect\remarkname}
\theoremstyle{definition}
\newtheorem{defn}[thm]{\protect\definitionname}
\theoremstyle{plain}
\newtheorem{prop}[thm]{\protect\propositionname}
\theoremstyle{plain}
\newtheorem{fact}[thm]{\protect\factname}
\theoremstyle{definition}
\theoremstyle{plain}
\newtheorem{lem}[thm]{\protect\lemmaname}
\theoremstyle{plain}
\newtheorem{claim}[thm]{Claim}%{\protect\claimname}
\theoremstyle{plain}
\let\originalleft\left
\let\originalright\right
\renewcommand{\left}{\mathopen{}\mathclose\bgroup\originalleft}
\renewcommand{\right}{\aftergroup\egroup\originalright}
\providecommand{\definitionname}{Definition}
\providecommand{\factname}{Fact}
\providecommand{\lemmaname}{Lemma}
\providecommand{\problemname}{Problem}
\providecommand{\propositionname}{Proposition}
\providecommand{\remarkname}{Remark}
\providecommand{\theoremname}{Theorem}
\providecommand{\examplename}{Example}
\newcommand{\eps}{\varepsilon}
\newcommand{\Sym}{{\rm Sym}}
\newcommand{\defect}{{\rm def}}
\newcommand{\Defect}{{\rm Def}}
\newcommand{\Id}{{\rm Id}}
\newcommand{\cF}{\mathcal{F}}
\newcommand{\FF}{\mathbb{F}}
\newcommand{\cA}{\mathcal{A}}
\newcommand{\cT}{\mathcal{T}}
\newcommand{\cG}{\mathcal{G}}
\newcommand{\cX}{\mathcal{X}}
\newcommand{\cY}{\mathcal{Y}}
\newcommand{\cZ}{\mathcal{Z}}
\newcommand{\Ker}{\textrm{Ker}}
\newcommand{\tr}{\textrm{tr}}
\newcommand{\Hom}{\textrm{Hom}}
\newcommand{\raE}{\overrightarrow{E}}
\newcommand{\raP}{\overrightarrow{P}}
\newcommand{\frError}{{\frak {error}}}
\newcommand{\frF}{{\frak F}}
\newcommand{\frD}{{\frak D}}
\newcommand{\frR}{{\frak R}}
\newcommand{\frP}{ {\frak P}}
\title[Stability of Homomorphisms, Coverings and Cocycles]{Stability of Homomorphisms, Coverings and Cocycles {I}: \\ Equivalence}
\author[M.\ Chapman]{Michael Chapman}
\address{Michael Chapman\hfill\break
	Courant Institute of Mathematical Sciences\hfill\break
	New York University,\hfill\break 251 Mercer St, New York, NY 10012, USA.}
\email{mc9578@nyu.edu}
\author[A.\ Lubotzky]{Alexander Lubotzky}
\address{Alexander Lubotzky\hfill\break
	Weizmann institute of Science\hfill\break
	Rehovot, Israel.}
\email{alex.lubotzky@mail.huji.ac.il}
\begin{document}

\maketitle

\begin{abstract}
    This paper is motivated by recent developments in group stability, high dimensional expansion, local testability of error correcting codes and topological property testing. In Part I, we formulate and motivate three stability problems:
    \begin{itemize}
        \item \emph{Homomorphism stability}: Are almost homomorphisms close to homomorphisms?
        \item \emph{Covering stability}: Are almost coverings of a cell complex close to genuine coverings of it?
         \item \emph{Cocycle stability}: Are $1$-cochains whose coboundary is small close to  $1$-cocycles?
    \end{itemize}
    We then prove that  these three problems are \textbf{equivalent}.
    \\
    
    In Part II of this paper \cite{CL_part2}, we present examples of stable (and unstable) complexes, discuss various applications of our new perspective, and provide a plethora of open problems and further research directions. In another companion paper \cite{CL_stability_Random_complexes}, we study the stability rates of random simplicial complexes in the Linial--Meshulam model.
\end{abstract}

\section{\textbf{Introduction}}
Property testing, a deep and popular area of theoretical computer science, is the study of  algorithmic local to global phenomena. A property tester is a randomized algorithm which aims to infer global properties of an object by applying a few random local checks on it (see Section \ref{sec:Prop_testing}). Property testing plays a major role in some of the most celebrated results in theoretical computer science. These include the PCP theorem \cites{PCP_thm,dinur2007pcp,radhakrishnan2007dinur}, good locally testable codes \cites{LTC_DELLM,dinur2022good,LTC_Panteleev_Kalachev}, and the recent complexity theoretic refutation of Connes' embedding problem \cite{MIPRE}.

Classically, functional properties were studied, such as  linearity \cite{BLR} and being a low-degree polynomial \cite{BFL91}, as well as  properties of graphs such as $k$-colorability \cite{Alon_Kriv_k-colorability}. In recent years, group theorists  began studying whether being a permutation solution to a system of group equations is a testable property (cf. \cites{GlebskyRivera,ArzhantsevaPaunescu,becker2022testability,becker_Mosheiff_Luobtzky2022testability}). For example, given a pair of almost commuting permutations, are they close to a commuting pair? By associating between systems of group equations and group presentations, this problem can be framed as an instance of homomorphism testing. Namely, in the spirit of Ulam's stability problem \cite{Ulam}, this is the study of whether an approximate group homomorphism is close to a genuine one. This framework generalizes the linearity test of \cite{BLR}, as well as the local testability of error correcting codes (cf. \cite{LTC_DELLM}), as we discuss in Section \ref{sec:LTC}. 

Even more recently, Dinur--Meshulam \cite{Dinur-Meshulam} initiated the study of topological property testing. In their paper, they study whether being a topological covering is a testable property. Namely, are near coverings of a given complex close to actual coverings of it? Furthermore, they relate this problem to the first cohomology of the underlying complex, and a new high dimensional expansion parameter of it.

In this paper, we give an alternative setup for testing coverings, and consequently a different cohomological viewpoint from the one used in \cite{Dinur-Meshulam}.\footnote{We describe the difference in Section \ref{sec:Dinur-Meshulam_framework}.} In our setup, the parameter acquired by the cohomological viewpoint is  equivalent to the covering testing parameter (as oppose to \cite{Dinur-Meshulam} in which they only bound one another). Moreover,  both the cohomological and covering parameters are related to the homomorphism testing parameter of the fundamental group of the underlying complex. We intentionally chose to view  these stability problems through the  property testing lens. One can present these problems in a purely algebraic manner, but we believe property testing perspective is important  for future applications, as well as to understand better the mechanisms involved. At the end of this paper, we discuss a slight generalization of our framework, in which our main Theorems \ref{thm:main1},\ref{thm:main2} and \ref{thm:main3} still hold.

The second part of this paper \cite{CL_part2} provides several elementary examples of stable complexes, namely complexes where the local perspective implies a global structure -- almost coverings of it are close to genuine coverings, and thus almost cocycles are close to cocycles and almost homomorphisms of its fundamental group are close to actual homomorphisms of the group. In addition,  large cosystols of a complex are related to property (T) of its fundamental group. Furthermore,  a family of bounded $2$-dimensional coboundary expanders with $\FF_2$ coefficients is provided, resolving a special case of a  problem due to Gromov. Lastly, we discuss several potential avenues of enquiry, including novel ways to tackle (but, not yet solve) the \emph{existance of non-sofic groups} problem (See Problem \ref{prob:sofic_groups}).

    Let us now define our objects of study. As oppose to the body of the paper, here we give a concise presentation of the notions of interest  ---  homomorphism stability, covering stability and cocycle stability ---  in a purely algebraic manner,  with no additional motivation. For a more thorough and motivational presentation of these notions, see Sections \ref{sec:Homomorphism_stability_of_group_presentations}, \ref{sec:Covering_stability_of_polygonal_complexes} and \ref{sec:Cocycle_stability}. In the rest of this section, $\rho\colon \mathbb{R}_{\geq 0}\to \mathbb{R}_{\geq 0}$ is a \emph{rate function} (Definition \ref{defn:Rate_function}), namely it is non-decreasing and satisfies $\rho(\eps)\xrightarrow{\eps\to 0}0$.

\subsection*{Homomrphism stability} Let $\Gamma\cong \langle S|R\rangle$ be a \emph{finite} group presentation, i.e., $S$ and $R$ are finite sets. Let $\Sym(n)$ be the symmetric group acting on $[n]=\{1,...,n\}$. Every map  $f\colon S\to \Sym(n)$ uniquely extends to a homomorphism from the free group $f\colon \cF(S)\to \Sym(n)$. This homomorphism factors through $\Gamma$ if and only if for every $r\in R$ we have $f(r)=\Id$. Hence, our local measurement for being a homomorphism would be how much is this condition violated. Namely, the \emph{homomorphism local defect} of $f$ is 
$$\defect_{hom}(f)=\Pro_{\substack{i\in [n] \\ r\in R}}[f(r).i\neq i]\leq \eps,$$
where the probability is the uniform one.\footnote{Other probability distributions over $R$ can and should be discussed, though in this paper we focus on the uniform one. For more on that, see Section \ref{sec:LTC}, Section \ref{sec:alt_dist} and the second part of this paper \cite{CL_part2}.}  Given  permutations $\sigma \in \Sym(n)$ and $\tau\in \Sym(N)$ where $N\geq n$,  the \emph{normalized Hamming distance with errors} between them   is
\[
d_h(\sigma,\tau)=1-\frac{|\{i\in [n]\mid \sigma(i)=\tau(i)\}|}{N}.
\]
This allows us to measure the distance between maps $f\colon S\to \Sym(n)$ and $g\colon S\to \Sym(N)$ as the expected value of the distances between their evaluation points,\footnote{Again, there are reasons to take other distributions over $S$ that are not the uniform one. See Section \ref{sec:alt_dist} and the second part of this paper \cite{CL_part2} for more on that.} i.e., 
\begin{equation}\label{eq:intro_dist_functions}
    d_h(f,g)=\Ex_{s\in S}[d_h(f(s),g(s))].
\end{equation}
The \emph{homomorphism global defect} of $f$ is its distance to the closest genuine homomorphism, namely
\[
\Defect_{hom}(f)=\inf\{d_h(f,\varphi) \mid N\geq n,\ \varphi\colon S\to \Sym(N),\ \defect_{hom}(\varphi)=0\}.
\]
The presentation $\langle S|R\rangle$ is said to be \emph{$\rho$-homomorphism stable} if $\Defect_{hom}(f)\leq \rho(\defect_{hom}(f))$ for every map $f\colon S\to \Sym(n)$.

\subsection*{Covering stability}
Recall that a \emph{polygonal complex} is a $2$-dimensional cell complex whose $2$-cells are polygons. The most popular example is that of $2$-dimensional simplicial complexes, in which the pasted polygons are all triangles. Another popular example is that of $2$-dimensional cube complexes (cf. \cite{sageev1995ends}), in which all pasted polygons are quadrilaterals. In our setup, polygons are not of any unique or bounded length. 

A combinatorial map $f\colon \cY\to \cX$ between two complexes is a function that maps $i$-cells of $\cY$ to $i$-cells of $\cX$ in an incident preserving manner. Hence, by defining the map on $G(\cY)$, the underlying graph of $\cY$, there is at most one way of extending it to all  paths in $G(\cY)$, and thus to extend it to the polygons of $\cY$. A combinatorial map $f$ from a graph $\cG$ to a complex $\cX$ is a \emph{genuine covering}, if one can add polygons to $\cG$ and extend $f$ accordingly so that $f$ becomes a topological covering. 

Assume $f$ was already a covering of the underlying graph of $\cX$. By the path lifting property of topological coverings (See Proposition 1.30, page 60, in \cite{Hatcher_Alg_Top}), for every closed path $\pi$ in $G(\cX)$ which originates at a vertex $x\in V(\cX)$, and for every vertex $x'$ in the fiber $f^{-1}(x)\subseteq V(\cG)$, one can lift $\pi$ to a path $\pi'$ in $\cG$ which originates at $x'$ and satisfies $f(\pi')=\pi$. Even if the original path $\pi$ was closed, the lifted path $\pi'$ may be either open or closed. Now, $f$ is a genuine covering if and only if for every polygon $\pi$ in $\cX$ and every $x'\in f^{-1}(x)$ the lifted path $\pi'$ is closed. Hence, we can define the \emph{covering local defect} of $f$  to be 
\[
\defect_{cover}(f)=\Pro_{\substack{\pi\in \overrightarrow{P}(\cX) \\ x'\in f^{-1}(x)}}\left[\textrm{The\ lift\ of}\ \pi\  \textrm{to\ } x'\textrm{\ is\ open}\right],
\]
where $\overrightarrow{P}(\cX)$ is the set of oriented polygons in $\cX$. 

There is a natural measure of distance between graphs, which is called the (normalized) \emph{edit distance}. In Section \ref{sec:graphs_a_la_BS} we provide a rigorous definition \eqref{eq:Edit_distance}, but in essence it measures how many edges need to be changed/deleted/added to move from the smaller graph to the larger one. This metric can also be defined on the category of  $\cX$-labeled graphs, namely graphs $\cY$ with a fixed combinatorial map into $\cX$. Thus, the \emph{covering global defect} $\Defect_{cover}(f)$ of $f\colon \cY\to \cX$ would be its normalized edit distance to the closest genuine covering $\varphi\colon \cZ\to \cX$ in the category of $\cX$-labeled graphs. The complex $\cX$ is \emph{$\rho$-covering stable} if $\Defect_{cover}(f)\leq \rho(\defect_{cover}(f))$ for every such $f$.

\subsection*{Cocycle stability}
Let $\cX$ be a polygonal complex. An anti-symmetric map from $\raE(\cX)$, the oriented edges of $\cX$, to $\Sym(n)$ is called a \emph{$1$-cochain with permutation coefficients}. In a similar manner to combinatorial maps, these $1$-cochains extend in a unique way to paths in the underlying graph $G(\cX)$. A $1$-cochain $\alpha$ is said to be a \emph{$1$-cocycle}, if for every (oriented) polygon $\pi$ we have $\alpha(\pi)=\Id$. Hence, we can define the \emph{cocycle local defect} of $\alpha$ to be 
\[
\defect_{cocyc}(\alpha)=\Pro_{\substack{i\in [n] \\ \pi \in \overrightarrow{P}(\cX)}}[\alpha(\pi).i\neq i].
\]
We already defined in \eqref{eq:intro_dist_functions} a measure of distance between maps from a fixed set to (potentially varying) permutation groups. Therefore, we have a measure of distance between $1$-cochains. Then, the \emph{cocycle global defect}  $\Defect_{cocyc}(\alpha)$ is the  distance of $\alpha$ to the closest $1$-cocycle. As before, $\cX$ is \emph{$\rho$-cocycle stable} if $\Defect_{cover}(\alpha)\leq\rho(\defect_{cover}(\alpha))$ for every $1$-cochain $\alpha$.
\\

\subsection*{Our results}
Given a polygonal complex $\cX$ together with a base point $*\in \cX$,  the \emph{fundamental group} $\pi_1(\cX,*)$ is the collection of loops based at $*$ up to homotopy equivalence.  Furthermore, if $*$ is chosen to be a vertex of the underlying graph $G(\cX)$, then by retracting any spanning tree of  $G(\cX)$ we acquire  a presentation of $\pi_1(\cX,*)$ (See Definition \ref{defn:pres_of_fundamental_group}). 
On the other hand, for any group presentation $\Gamma=\langle S|R\rangle$, one can construct its \emph{presentation complex} $\cX_{\langle S|R\rangle}$, whose fundamental group is isomorphic to $\Gamma$ (See Definition \ref{defn:presentation_complex}).
The main goal (of this part) of the paper is to prove the following equivalences.
\begin{thm}  \label{thm:main1}
    Let $\cX$ be a connected polygonal complex. Then, the following are equivalent:
    \begin{itemize}
        \item $\cX$ is $\rho$-cocycle stable.
        \item $\cX$ is $\rho$-covering stable.
    \end{itemize}
    Note that the rate $\rho$ is  \textbf{the same} in both.
\end{thm}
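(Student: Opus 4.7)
The plan is to establish an explicit dictionary between $1$-cochains $\alpha\colon \raE(\cX)\to\Sym(n)$ with permutation coefficients and $n$-sheeted $\cX$-labeled coverings of the underlying graph $G(\cX)$, and to verify that, under this dictionary, cocycles correspond to genuine complex coverings and both the local and global defects agree with the \emph{same} normalization. Uniqueness of the rate then comes for free from the identity of defects.

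First I would construct, given a $1$-cochain $\alpha$, the labeled graph $\cY_\alpha$ whose vertex set is $V(\cX)\times[n]$ and whose oriented edges are $\{((u,i),(v,\alpha(u,v)(i))):(u,v)\in\raE(\cX),\ i\in[n]\}$; antisymmetry of $\alpha$ makes this consistent as an unoriented graph, and the natural projection $f_\alpha\colon \cY_\alpha\to G(\cX)$ is a combinatorial covering. Conversely, any $\cX$-labeled covering $f\colon \cY\to G(\cX)$ together with a choice of bijections from each fiber to $[n]$ recovers a $1$-cochain $\alpha_f$; the freedom in this choice is precisely a ``gauge'' symmetry which does not affect the stability measurements after taking the appropriate infima.

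Next I would match the local defects. By the path-lifting property of graph coverings, the lift of an oriented polygon $\pi$ starting at $(x,i)$ terminates at $(x,\alpha(\pi)(i))$, so closed lifts are precisely fixed points of the permutation $\alpha(\pi)$. Averaging over $\pi\in\overrightarrow{P}(\cX)$ and $i\in[n]$ gives the on-the-nose identity $\defect_{cover}(f_\alpha)=\defect_{cocyc}(\alpha)$. The same identity shows that $\alpha$ is a $1$-cocycle if and only if $f_\alpha$ extends (uniquely) to a genuine topological covering of $\cX$; hence under the dictionary cocycles correspond bijectively to genuine complex coverings.

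The core of the argument is the matching of global defects. In one direction, from a close $1$-cocycle $\varphi\colon \raE(\cX)\to\Sym(N)$ the associated graph $\cY_\varphi$ is a genuine complex covering of $\cX$, and the labeled edit distance between $\cY_\alpha$ and $\cY_\varphi$ equals $d_h(\alpha,\varphi)$, because above each $e\in\raE(\cX)$ the number of coinciding edges is exactly $|\{i\in[n]:\alpha(e)(i)=\varphi(e)(i)\}|$, which is the numerator appearing in $d_h(\alpha(e),\varphi(e))$. Thus $\Defect_{cover}(f_\alpha)\leq \Defect_{cocyc}(\alpha)$. For the reverse direction, given a genuine complex covering $\cZ$ close to $\cY_\alpha$ in labeled edit distance, I choose fiber identifications on $\cZ$ that achieve the edit-distance optimum; reading off the associated $1$-cocycle $\varphi$ produces $d_h(\alpha,\varphi)\leq \mathrm{edit}(\cY_\alpha,\cZ)$, hence $\Defect_{cocyc}(\alpha)\leq\Defect_{cover}(f_\alpha)$. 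The combined identity $\Defect_{cocyc}(\alpha)=\Defect_{cover}(f_\alpha)$ transports a $\rho$-bound in either direction to the other, with the \textbf{same} $\rho$.

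The main obstacle I foresee is the bookkeeping in the edit-distance matching when the two labeled graphs have different numbers of sheets ($n\neq N$): one must verify that the optimal matching underlying the edit distance can always be realized by a global choice of fiber identifications consistent across all edges simultaneously, so that the extracted cochain $\varphi$ is genuinely defined and the edgewise agreement counts sum correctly to match the averaged definition of $d_h$. Handling this cleanly is mostly a question of aligning the normalization conventions already set up in Section \ref{sec:graphs_a_la_BS} with the averaging convention in \eqref{eq:intro_dist_functions}; once this is done, the equivalence asserted in the theorem, with identical rate function, is immediate from the dictionary.
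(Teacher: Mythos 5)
Your proposal follows essentially the same route as the paper's own proof: it builds the explicit dictionary between $1$-cochains and $\cX$-labeled coverings (the paper's Claim~\ref{claim:encoding_of_covers_by_1-cocyc}), observes that path-lifting identifies the two local defects on the nose (Corollary~\ref{cor:local_defect_preserved}), and matches the global defects by aligning the labeled edit distance with the normalized Hamming distance via a careful choice of fiber identifications (Claim~\ref{claim:global_def_preserved}). The ``gauge'' ambiguity you flag is exactly what the paper handles by working with orbits of $1$-cochains under the $0$-cochain action.
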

\begin{thm}\label{thm:main2}
    Let $\Gamma \cong \langle S|R\rangle$ be a group presentation with $|S|,|R|<\infty$. Let $\cX_{\langle S|R\rangle}$ be the presentation complex associated with $\langle S|R\rangle$. Then, the following are equivalent:
    \begin{itemize}
        \item $\cX_{\langle S|R\rangle}$ is $\rho$-cocycle stable.
        \item $\langle S|R\rangle$ is $\rho$-homomorphism stable.
    \end{itemize}
    Again, $\rho$ is the exact same rate.
\end{thm}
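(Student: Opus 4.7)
The plan is to exhibit a defect-preserving dictionary between the data of homomorphism testing for $\langle S|R\rangle$ and the data of cocycle testing for its presentation complex $\cX := \cX_{\langle S|R\rangle}$. By construction, $\cX$ has one vertex $*$, one loop $e_s$ at $*$ for each $s\in S$, and one polygon $\pi_r$ attached along the cyclic word $r$ for each $r\in R$. Consequently an anti-symmetric $1$-cochain $\alpha\colon \raE(\cX)\to \Sym(n)$ is completely encoded by the set map $f_\alpha\colon S\to \Sym(n)$ defined by $\alpha(e_s)=f_\alpha(s)$, and conversely every $f\colon S\to \Sym(n)$ extends uniquely to an anti-symmetric cochain $\alpha_f$. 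Crucially, tracing $\alpha_f$ around the boundary of $\pi_r$ (in either orientation) yields exactly $f(r)^{\pm 1}$, where $f$ is also read as the canonical homomorphism $\cF(S)\to \Sym(n)$.

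Under this bijection I would verify three identities, after which the equivalence is tautological. First, the local defects match: oriented polygons of $\cX$ come in pairs $(\pi_r,\pm)$, $r\in R$, on which $\alpha_f$ evaluates to $f(r)$ and $f(r)^{-1}$; since inverse permutations have the same fixed-point set, sampling uniformly over $\raP(\cX)\times[n]$ gives $\defect_{cocyc}(\alpha_f)=\defect_{hom}(f)$. Second, the distances match: because $d_h(\sigma,\tau)=d_h(\sigma^{-1},\tau^{-1})$ and $\raE(\cX)$ consists of the pairs $(e_s,\bar e_s)$, averaging $d_h(\alpha_f(e),\alpha_g(e))$ over oriented edges coincides with averaging $d_h(f(s),g(s))$ over $S$, so $d_h(\alpha_f,\alpha_g)=d_h(f,g)$. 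Third, the reference classes match: an anti-symmetric cochain $\beta$ is a $1$-cocycle iff $\beta(\pi_r)=\Id$ for every $r\in R$, iff $f_\beta(r)=\Id$ for every $r$, iff $f_\beta$ factors through $\Gamma=\langle S|R\rangle$; hence anti-symmetric $1$-cocycles on $\cX$ are precisely the homomorphisms $\Gamma\to\Sym(N)$. Combined with the distance identity, this yields $\Defect_{cocyc}(\alpha_f)=\Defect_{hom}(f)$.

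Putting the three equalities together, the inequality $\Defect_{cocyc}(\alpha)\leq\rho(\defect_{cocyc}(\alpha))$ holds for every anti-symmetric cochain $\alpha$ if and only if $\Defect_{hom}(f)\leq \rho(\defect_{hom}(f))$ holds for every $f$, which is the theorem with identical rate $\rho$ on both sides. I do not anticipate any serious obstacle: the presentation complex was designed precisely so that this argument reduces to a definition-chase. The only mild technical care is in the handling of orientations --- the factor of $2$ coming from each polygon and each edge admitting two orientations must cancel on both sides --- and this is ensured by the invariance of fixed-point sets and of the normalized Hamming distance under inversion. The same scheme would accommodate non-uniform distributions on $S$ and $R$, provided one weights $\raE(\cX)$ and $\raP(\cX)$ accordingly, as hinted at in the paper's footnotes.
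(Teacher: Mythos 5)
Your proposal is correct and follows essentially the same route as the paper: set up the bijection $f\leftrightarrow\alpha_f$ via $\alpha_f(e(s))=f(s)$, check that $\delta\alpha_f(\pi(r))=f(r)$, and verify that both the local defects and the $d_h$-distances are preserved, so that stability transfers with the identical rate $\rho$. The only small imprecision is your phrase that oriented polygons ``come in pairs $(\pi_r,\pm)$''; in fact each un-oriented polygon of length $\ell$ has $2\ell$ orientations (cyclic shifts as well as reversals), and the defect computation additionally uses that $d_h(\cdot,\Id)$ is invariant under conjugation, not just inversion --- but this does not affect the validity of the argument, and the paper handles it the same way.
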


\begin{thm}\label{thm:main3}
     Let $\cX$ be a connected polygonal complex, and $*$ a vertex of $\cX$. Let $T$ be a spanning tree of $G(\cX)$, and  let $\pi_1(\cX,*)\cong \langle S|R\rangle $ be the  presentation of the fundamental group of $\cX$ associated with retracting $T$ to the basepoint $*$. Then
     \begin{enumerate}
         \item If $\langle S|R\rangle$ is $\rho$-homomorphism stable, then $\cX$ is $\rho$-cocycle stable.
         \item If $\cX$ is $\rho$-cocycle stable, then $\langle S|R\rangle$ is $\left(|\raE(\cX)|\cdot\rho\right)$-homomorphism stable, where $|\raE(\cX)|$ is the number of edges in $\cX$.  
     \end{enumerate}
\end{thm}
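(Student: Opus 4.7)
My plan is to reduce Theorem \ref{thm:main3} to Theorem \ref{thm:main2}, using the geometric fact that the presentation complex $\cX_{\langle S|R\rangle}$ is obtained from $\cX$ by contracting the spanning tree $T$: non-tree edges of $\cX$ correspond bijectively to the generators $S$, and polygons of $\cX$ correspond bijectively to polygons of $\cX_{\langle S|R\rangle}$ and thus to relators $R$. The principal tools are gauge transformations (conjugation by a permutation $h_v$ at each vertex $v$), which preserve the cocycle local defect and are edge-wise Hamming isometries when applied simultaneously to two cochains (since $d_h(h_v^{-1}\sigma h_u,h_v^{-1}\tau h_u)=d_h(\sigma,\tau)$), together with the triangle inequality for Hamming distance along a path.

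Two bookkeeping facts drive both directions. \textbf{(a)} Any cochain $\alpha$ on $\cX$ admits a gauge-equivalent cochain $\alpha'$ that is identity on every edge of $T$, obtained by defining the gauge iteratively along $T$ starting from $h_*=\Id$; this is possible since $T$ has no cycles. \textbf{(b)} A cochain $\alpha$ on $\cX$ that is trivial on $T$ descends to a cochain $\tilde\alpha$ on $\cX_{\langle S|R\rangle}$ via $\tilde\alpha(s)=\alpha(e_s)$, and reading polygon boundaries (where tree edges contribute identity) yields $\defect_{cocyc}(\alpha)=\defect_{cocyc}(\tilde\alpha)$, with $\alpha$ a cocycle iff $\tilde\alpha$ is.

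For Part (1), assume $\langle S|R\rangle$ is $\rho$-homomorphism stable, so by Theorem \ref{thm:main2} $\cX_{\langle S|R\rangle}$ is $\rho$-cocycle stable. Given $\alpha$ on $\cX$, gauge to $\alpha'$ trivial on $T$, descend to $\tilde\alpha$, apply $\rho$-cocycle stability of $\cX_{\langle S|R\rangle}$ to obtain a cocycle $\tilde\beta$ with $d_h(\tilde\alpha,\tilde\beta)\leq\rho(\defect_{cocyc}(\tilde\alpha))=\rho(\defect_{cocyc}(\alpha))$. Lift to a cocycle $\beta'$ on $\cX$ (trivial on $T$, equal to $\tilde\beta$ on non-tree edges — a cocycle because polygon boundaries match after tree contraction), and un-gauge to a cocycle $\beta$. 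Since gauges preserve edge-wise distances and $\alpha',\beta'$ agree on every edge of $T$, averaging only the non-tree contributions gives $d_h(\alpha,\beta)\leq \frac{2|S|}{|\raE(\cX)|}\rho(\defect_{cocyc}(\alpha))\leq \rho(\defect_{cocyc}(\alpha))$.

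For Part (2), let $\cX$ be $\rho$-cocycle stable and $f\colon S\to\Sym(n)$. Define $\alpha$ on $\cX$ to be trivial on $T$ and $f(s)$ on $e_s$; by (b), $\defect_{cocyc}(\alpha)=\defect_{hom}(f)$. By hypothesis there is a cocycle $\beta$ on $\cX$ with $d_h(\alpha,\beta)\leq\rho(\defect_{hom}(f))$. Set $\varphi(s)=\beta(\tau_{u(s)}\cdot e_s\cdot\tau_{v(s)}^{-1})$; since $\beta$ is a cocycle, this is well-defined as a homomorphism $\langle S|R\rangle\to\Sym(N)$. Because $\alpha$ is trivial on $T$, also $f(s)=\alpha(\tau_{u(s)}\cdot e_s\cdot\tau_{v(s)}^{-1})$, and iterating the Hamming-isometry of pre/post-multiplication yields
\[
d_h(f(s),\varphi(s))\leq \sum_{e\in s}d_h(\alpha(e),\beta(e)).
\]
Averaging over $s\in S$ and using $\#\{s:e\in s\}\leq |S|$ gives $d_h(f,\varphi)\leq |\raE(\cX)|\cdot d_h(\alpha,\beta)\leq |\raE(\cX)|\cdot\rho(\defect_{hom}(f))$. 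The main obstacle is precisely this linear factor in Part (2), which is caused by the possibly large diameter of $T$ and cannot be avoided by gauging: any gauge that trivializes $\beta$ on $T$ destroys the tree-triviality of $\alpha$ on which the identity $f(s)=\alpha(\tau_{u(s)}\cdot e_s\cdot\tau_{v(s)}^{-1})$ rests.
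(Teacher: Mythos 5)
Your proposal is correct and takes essentially the same approach as the paper: gauge $\alpha$ to be trivial on $T$, identify tree-trivial cochains on $\cX$ with maps $S\to\Sym$, and in Part (2) bound the distance along the tree loops via the triangle inequality for the bi-invariant Hamming metric. The only cosmetic differences are that you route Part (1) through Theorem \ref{thm:main2} whereas the paper argues directly that the edge-average distance can only decrease when extending the closest homomorphism by the identity on $T$, and in Part (2) you double-count edge appearances across loops rather than bounding each loop's contribution uniformly by $\sum_{e\in\raE(\cX)}d_h(\alpha(e),\beta(e))$ — both yield the same $|\raE(\cX)|$ factor.
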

Though not as tight as Theorems \ref{thm:main1} and \ref{thm:main2}, Theorem \ref{thm:main3} still implies a qualitative equivalence, and in many contexts  -- see for example the discussion in Section \ref{sec:group_soficity} -- it is enough.

Our notion of homomorphism stability is usually called \emph{flexible pointwise stability in permutations} in the literature (cf. \cite{BeckerLubotzky}), and it is a thoroughly studied topic. Hence, using   clause $(1)$ of Theorem \ref{thm:main3}, there are \emph{off the shelf} examples of complexes with known stability rates. For example, by \cite{levit_lazarovich2019surface}, triangulations of compact surfaces of genus  $g\geq 2$ are $\rho$-stable with rate $\rho(\eps)=\Theta(-\eps \log\eps)$. While, by \cite{BeckerMosheiff}, when $g=1$ the surface is $\rho$-stable with $\rho$ being $\Omega(\sqrt{\eps})$ and $O(\eps^{\frac{1}{4}})$. In the second part of this paper \cite{CL_part2}, we provide elementary examples of complexes with \textbf{linear} stability rate. 

\begin{rem}
    As oppose to Theorem \ref{thm:main1}, Theorems \ref{thm:main2} and \ref{thm:main3} can be framed in much greater generality than presented here. Our proof method applies to any metric coefficient group --- not only permutations equipped with the Hamming metric. We  thus relate flexible Hilbert--Schmidt stability \cites{HadwinShulman,BeckerLubotzky}, Frobenius norm stability \cites{DGLT,LubotzkyOppenheim} and so on to a stability criterion for cocycles. See Corollary \ref{cor:other_Gammas}.
\end{rem}

Part I of the paper is organized as follows: In Section \ref{sec:Prop_testing} we define all property testing related notions. In Section \ref{sec:Homomorphism_stability_of_group_presentations} we define and motivate homomorphism stability of group presentations.  In Section \ref{sec:Covering_stability_of_polygonal_complexes} we define and motivate covering stability of polygonal complexes. In Section \ref{sec:Cocycle_stability} we define and motivate cocycle stability of polygonal complexes. The motivational parts of Sections \ref{sec:Homomorphism_stability_of_group_presentations}, \ref{sec:Covering_stability_of_polygonal_complexes} and \ref{sec:Cocycle_stability} are more involved, and can be skipped by the non-expert or first time reader. Section \ref{sec:equivalences}  is devoted to the proofs of Theorems \ref{thm:main1},\ref{thm:main2} and \ref{thm:main3}.  In Section \ref{sec:alt_dist} we provide a generalization of our framework, in which Theorems \ref{thm:main1},\ref{thm:main2} and \ref{thm:main3} still hold. 

\subsection*{Acknowledgements}

Michael Chapman acknowledges with gratitude the Simons Society of Fellows and is supported by a grant from the Simons Foundation (N. 965535).
Alex Lubotzky is supported by the European Research Council (ERC)
under the European Union's Horizon 2020 (N. 882751), and by a research grant from the Center for New Scientists at the Weizmann Institute of Science.

\section{\textbf{Property testing}}\label{sec:Prop_testing}

The goal of a property tester is to decide whether a given function satisfies a specific property by querying it at a few random evaluation points. Such a tester cannot decide with certainty that the function satisfies the property, since it may require reading all the values of the function. Hence, a property tester is good given that it satisfies the following: As the rejection probability of the test gets smaller, the closer the function in hand is to another function which has the property. We now make this paragraph precise. For a thorough introduction to the theory, see \cite{Goldreich}.

Let $\frD$ and $\frR$ be collections of \textbf{finite} sets. Let $\frF$ be the collection of functions whose domain is from $\frD$ and range is from $\frR$, namely 
\[
\frF=\{f\colon D\to R\mid D\in \frD,R\in \frR\}.
\]
Let $f_1\colon D_1\to R_1$ and $f_2\colon D_2\to R_2$ be two functions from $\frF$. The \emph{Hamming distance with errors} between them is defined as follows:
\begin{equation}\label{eq:Hamming_dist}
    d_H(f_1,f_2)=|D_2\setminus D_1|+|D_1\setminus D_2|+|\{x\in D_1\cap D_2\mid f_1(x)\neq f_2(x)\}|.
\end{equation}
When $D_1=D_2$, this notion agrees with the usual Hamming distance between functions. If for every $f\colon D\to R$ and $x\notin D$ we define $f(x)=\frError$, and we assume that $\frError\notin R'$ for any $R'\in \frR$, then 
we can equivalently define
\[
d_H(f_1,f_2)=|\{x\in D_1\cup D_2 \mid f_1(x)\neq f_2(x)\}|,
\]
which is the usual way the Hamming distance is defined.
Though the Hamming distance with errors is a good metric, it outputs natural numbers and cannot tend to zero without being zero. Hence, we usually normalize it in some way. The most common normalization is the following, which we call the \emph{normalized Hamming distance with errors}:
\begin{equation}\label{eq:normalized_Hamming_distance}
    d_h(f_1,f_2)=\frac{d_H(f_1,f_2)}{|D_1\cup D_2|}=\Pro_{x\in D_1\cup D_2}[f_1(x)\neq f_2(x)],
\end{equation}
where $\Pro_{x\in A}$ is the uniform probability on $A$. As we further discuss in Section \ref{sec:alt_dist}, and specifically in Claim \ref{claim:weighting_system_perturbation}, in certain cases other probability distributions are more natural. But, we stick to the uniform distribution for most of Part I of the paper. 

Let $\frP$ be a subset of $\frF$. We think of  $\frP$ as the collection of functions  in $\frF$ that satisfy a certain property. The $\frP$-\emph{global defect} of a function $f\in \frF$ is 
\[
\Defect_\frP(f)=\inf \{d_h(f,\varphi)\mid \varphi\in \frP\}.
\]

A \emph{tester} for $\frP$ is a randomized algorithm  $\cT$ that receives as input a black box version\footnote{The way $\cT$ interacts with $f$ is by sending to it an evaluation point $x\in D$ and receiving back the value $f(x)\in R.$} of a function $f\colon D\to R$ in $\frF$ and needs to decide whether $f\in \frP$. The way it operates is as follows: 
\begin{itemize}
    \item \textit{Sampling phase}\label{Sampling_Phase}: $\cT$ randomly chooses an evaluation point $x_1\in D$ and sends it to $f$. The function $f$ answers with $f(x_1)$. According to this value, $\cT$ either asks for another evaluation point $x_2\in D$ (which may depend on $f(x_1)$), or moves to the next phase. $\cT$ is allowed to repeat this process for at most $q$ rounds, where $q$ is independent of $f$.\footnote{In certain contexts, and for some applications, $q$ may be allowed to grow with $f$. Mainly, to make the stability rate  $\rho$  (See Definition \ref{defn:completeness_stability_of_tester}) better using parallel repetition (cf. \cite{Raz_Parallel_Rep_paper,Raz_parallel_rep_survey}), or when $f$ is encoding an exponentially larger function in a succinct way (cf. \cite{BFL91}). Actually, this flexibility is crucial for the applications of efficient stability mentioned in Section 6.4 of \cite{CL_part2}. } The maximum number of sampled points $q$ is called \emph{the query complexity of $\cT$}.\footnote{The term \emph{locality} is also commonly used for $q$ (cf. \cite{LTC_DELLM}).}
    \item \textit{Decision phase}: According to the outputs $f(x_1),...,f(x_q)$, the tester $\cT$ decides whether to accept (namely, it decides that $f\in \frP$) or reject (namely, it decides that $f\notin \frP$).
\end{itemize}
The \emph{$\frP$-local defect} of a function $f\in \frF$ is its rejection probability in $\cT$, namely
\begin{equation}\label{eq:P_local_defect}
    \defect_\frP(f)=\Pro[\cT\ \textrm{rejects}\ f], 
\end{equation}
where the probability runs over all possible sampling phases.
\begin{defn} \label{defn:Rate_function}
    A non-decreasing function $\rho\colon \mathbb{R}_{\geq 0}\to  \mathbb{R}_{\geq 0}$ satisfying  $\rho(\eps)\xrightarrow{\eps \to 0}0$ will be called a \emph{rate function}. 
\end{defn}
\begin{defn} \label{defn:completeness_stability_of_tester}
    The tester $\cT$ is \emph{complete} if
\[
\forall f\in \frF\ \colon\ \ \defect_\frP(f)=0 \iff f\in \frP.
\]
Namely, functions that satisfy the property are always accepted, and all other functions are rejected with some positive probability. 

Let $\rho$ be a rate function as in Definition \ref{defn:Rate_function}. The tester $\cT$ is said to be $\rho$-\emph{stable}\footnote{This notion is also called \emph{$\rho$-robustness} and \emph{$\rho$-soundness} in the literature.}, if
\[
\forall f\in \frF\ \colon\ \ \Defect_\frP(f)\leq \rho(\defect_\frP(f)).
\]
\end{defn}

\section{\textbf{Homomorphism stability of group presentations}}\label{sec:Homomorphism_stability_of_group_presentations}
In 1940, Ulam raised the following problem:
\begin{problem}[Ulam's homomorphism stability problem \cite{Ulam}]
    Given an almost homomorphism between two groups, is it close to a genuine homomorphism between them?
\end{problem}
There are many (non-equivalent) ways to formulate Ulam's problem (cf. \cite{Kazhdan,BOT,glebsky2023asymptotic,GowersHatami,BLT,DGLT, BC22}). In this section we study a finite group presentations variant of it.\footnote{
    Our notion of homomorphism stability is usually referred to in the literature as \emph{pointwise flexible stability in permutations}  (see \cite{BeckerLubotzky}).} 

For a positive integer $n$, let $\Sym(n)$ be the symmetric group acting on $[n]=\{1,...,n\}$. Given  permutations $\sigma \in \Sym(n)$ and $\tau\in \Sym(N)$ where $N\geq n$,  the normalized Hamming distance with errors between them (as defined in \eqref{eq:normalized_Hamming_distance}) is
\begin{equation}\label{eq:permutation_normalized_Hamming_distance}
d_h(\sigma,\tau)=1-\frac{|\{i\in [n]\mid \sigma(i)=\tau(i)\}|}{N}=\Pro_{i\in [N]}[\sigma(i)\neq \tau(i)].
\end{equation}

Let $\Gamma \cong\langle S | R\rangle$ be a  finite group  presentation, and $\cF(S)$ the free group with basis $S$. Let  $f\colon S\to \Sym(n)$ be a function. By the universal property of free groups, $f$ has a unique extension to a homomorphism from $\cF(S)$ to $\Sym(n)$, which we denote by $f$ as well. This $f$ factors through $\Gamma$ if and only if for every $r\in R$ we have  $ f(r)=\Id.$
Hence, the collection of functions $f\colon S\to \Sym(n)$ that induce a homomorphism from $\Gamma$ is
\[
\Hom(\Gamma,\Sym)=\{f\colon S\to \Sym(n)\mid n\in \mathbb{N},\ \forall r\in R\colon f(r)=\Id\}.
\]
This suggests the following tester for deciding whether $f\colon S\to \Sym(n)$ induces a homomorphism from $\Gamma$:
\begin{algorithm}[H]
\caption{\quad\texttt{Homomorphism tester}}\label{alg:hom_test}
\textbf{Input:} $f\colon S\to \Sym(n)$. \\
\textbf{Output:} Accept or Reject.

\begin{algorithmic}[1]

\STATE Pick $r\in R$ unifromly at random.
\STATE Pick $i\in [n]$ uniformly at random. 
\STATE{If $f(r).i=i$, then return Accept.}

\STATE {Otherwise, return Reject.}

\end{algorithmic}
\end{algorithm}

\begin{rem}\label{rem:general_distributions_hom_test}
    Instead of sampling $r\in R$ uniformly at random in Algorithm \ref{alg:hom_test}, one can use any fully supported probability measure on $R$. This slight generalization  is important to the discussion in Section \ref{sec:LTC}, and is described in Section \ref{sec:alt_dist}.  Other than these sections, we would not need this generalized version. Furthermore, it is natural in some instances to measure the Hamming distance in a weighted way by choosing a non-unifrom distribution on $S$ (See Section \ref{sec:alt_dist} and \cite{CVY_efficient,evra2016bounded}). But again, we will use only the uniform measure for the Hamming distances in this part of the paper. 
\end{rem}
To fit Algorithm \ref{alg:hom_test} to the setup of Section \ref{sec:Prop_testing}, we need the input function $f$ to be in the form $f\colon S\times [n]\to [n]$ with the extra condition that $f(s,\cdot)\colon [n]\to [n]$ is a permutation for every $s\in S$. Namely, \begin{equation*}
\begin{split}
     &\frF=\{f\colon S\times[n]\to[n]\mid n\in \mathbb{N}, \forall s\in S \colon f(s,\cdot)\in \Sym(n)\},\\
    &\frP=\Hom(\Gamma,\Sym),\\
    &\forall f_1,f_2\in \frF\  \colon \ \ d_h(f_1,f_2)=\Ex_{s\in S}[d_h(f_1(s,\cdot),f_2(s,\cdot))].
\end{split}
\end{equation*}
Hence, the \emph{homomorphism local defect} of $f\colon S\to \Sym(n)$, which is the rejection probability in the above test,  is
\[
\defect_{hom}(f)=\Ex_{r\in R}[d_h(f(r),\Id)].
\]
Moreover, the \emph{homomorphism global defect} of $f\colon S\to \Sym(n)$, which is its distance to the collection $\Hom(\Gamma,\Sym)$, is
\[
\Defect_{hom}(f)=d_h(f,\Hom(\Gamma,\Sym))=\inf\{d_h(f,\varphi)\mid \varphi\in \Hom(\Gamma,\Sym)\}.
\]

\begin{defn}\
Let $\rho$ be a rate function, as in Definition \ref{defn:Rate_function}.  A finite presentation $\Gamma\cong \langle S|R\rangle$ is said to be\emph{ $\rho$-homomorphism stable} if for every  $f\colon S\to \Sym(n)$, we have
	\[
	\Defect_{hom}(f)\leq \rho(\defect_{hom}(f)).
	\]
\end{defn}

The goal of the following Sections, \ref{sec:LTC} and \ref{sec:group_soficity}, is to motivate the notion of $\rho$-homomorphism stability.

\subsection{Locally testable codes}\label{sec:LTC}
In this section, we relate homomorphism stability to the well studied topic of \emph{locally testable codes}. 

The classical example of a homomorphism tester is the Blum--Luby--Rubinfeld (BLR) linearity test \cite{BLR}. In it, the input is a function $f\colon \FF_2^n\to \FF_2$, where $\FF_2=\{0,1\}$ is the field with two elements, and the goal is to decide whether $f$ is linear. To that end, two points $x,y\in \FF_2^n$ are sampled uniformly at random, and $f$ is evaluated  at three  points $x,y$ and $x+y$. The tester accepts if and only if $f(x)+f(y)=f(x+y)$.
This test is $\rho$-stable for the identity rate function $\rho(\eps)=\eps$ (See Section 1.6 in \cite{odonnell2021analysis} for a proof).

The BLR linearity test is a special case of Algorithm \ref{alg:hom_test}. To see that, note that $\FF_2\cong \Sym(2)$. Moreover, $\FF_2^n\cong\langle \{s_x\}_{x\in \FF_2^n}\mid \{s_x\cdot s_y\cdot s_{x+y}^{-1}\}_{x,y\in\FF_2^n}\rangle $, which is the multiplication table presentation of $\FF_2^n$. So, the linearity test \textbf{is} Algorithm \ref{alg:hom_test} under the restriction that the range of $f$ is $\Sym(2)$ and not any $\Sym(N)$.
In \cite{BC22}, it is proved that the multiplication table presentation of \textbf{any} finite group is $\rho$-homomorphism stable with linear rate $\rho(\eps)=3000\eps$.  This generalizes \cite{BLR}, though with worse parameters.

A \emph{linear binary error correcting code} -- or just \emph{code} from now on -- is a linear subspace of a finite vector space over $\FF_2$. E.g., the Hadamard code is the subspace  of linear functions from $\FF_2^n$ to $\FF_2$, out of all functions from $\FF_2^n$ to $\FF_2$. The linearity test is  a complete and $\rho$-stable tester, in the sense of Definition \ref{defn:completeness_stability_of_tester}, for checking whether a given function is a Hadamard code word. Moreover, its query complexity  (which was defined in the \textit{Sampling phase} of the tester in Section \ref{Sampling_Phase}) is $3$.  Codes that have a tester which is complete, $\rho$-stable and with constant query complexity, are called \emph{locally testable}. 

Every matrix $A\in M_{m\times n}(\FF_2)$ defines the code ${\rm Ker}(A)\subseteq \FF_2^n$.\footnote{Such  matrices are usually called  \emph{parity-check matrices}.} Together with a probability distribution $\mu$ over $[m]$, it also defines a tester:

\begin{algorithm}[H]
\caption{\texttt{\quad Matrix tester}}\label{alg:matrix_test}
\textbf{Input:} A column vector $v\in \FF_2^n$. \\
\textbf{Output:} Accept or Reject.

\begin{algorithmic}[1]
\STATE {Sample $i\in [m]$  according to $\mu$. }
\STATE {Let $R_i(A)$ be the $i^{\rm th}$ row of $A$. Then, if  $R_i(A)\cdot v=0$, return Accept.}
\STATE {Otherwise, return Reject.}

\end{algorithmic}
\end{algorithm}
Algorithm \ref{alg:matrix_test} is complete given that $\mu$ is fully supported. If each row of $A$ contains at most $q$ non-zero entries, for some universal constant $q$, then this tester has bounded query complexity.\footnote{Codes with a parity matrix whose rows are of bounded weight are known in the literature as LDPC codes (cf. \cite{Gallager_LDPC,Sipser_Spielman_exp_codes}).} So, the main obstacle is to choose $A$ such that the tester is $\rho$-stable for some desired rate function $\rho$.\footnote{Every code would be $\rho$-stable for \textbf{some} $\rho$. The goal is to find codes with a sufficiently good rate of local testabillity. See, e.g.,  our choice of parameters in the efficient stability section in Part II of this paper \cite{CL_part2}. }
Algorithm \ref{alg:matrix_test} is clearly a generalization of the BLR linearity test. Furthermore, every tester for a linear locally testable code can be assumed to be in this form \cite{ben2003some}. But, the matrix tester is a special case of Algorithm \ref{alg:hom_test}:\footnote{As mentioned in Remark \ref{rem:general_distributions_hom_test}, in Algorithm \ref{alg:hom_test} we chose a relation uniformly at random. To  mimic  Algorithm \ref{alg:matrix_test}, we need to generalize the homomorphism tester so it may sample a relation according to some  distribution other than the uniform one.}
The code $\Ker(A)$ can be viewed as the collection of homomorphism inducing maps from $S=\{x_j\}_{j\in[n]}$ to $\Sym(2)\cong \FF_2$. The relations in this case are  $R=\left\{\prod x_j^{A_{ij}}\right\}_{i\in [m]}$, where the product is always ordered by the index. This demonstrates how general Algorithm \ref{alg:hom_test} is: It encapsulates local testability of error correcting codes as a \emph{special case}. This viewpoint on Algorithm \ref{alg:hom_test} raises fascinating new problems which we further discuss in the open problem section of the second part of this paper \cite{CL_part2} (see also \cite{CVY_efficient}).

\subsection{Group soficity} \label{sec:group_soficity}
In this section, we relate homomorphism stability to the well studied notion of \emph{sofic groups}. As oppose to the general philosophy of this paper, in this section the exact rate $\rho$ is not important. Namely, homomorphism stability is seen as a qualitative and not quantitative property. This viewpoint is of interest when the finitely presented group $\Gamma \cong \langle S|R\rangle$ is \textbf{infinite}.
\begin{fact}[\cite{ArzhantsevaPaunescu}]\label{fact:moving_between_pres_of_same_gp}
Let  $\langle S|R\rangle, \langle S'|R'\rangle$ be two finite presentations of the same group $\Gamma$. Assume  $\langle S|R\rangle$  is $\rho$-homomorphism stable for some rate function $\rho$. Then $\langle S'|R'\rangle$ is  $(C\cdot \rho)$-homomorphism stable for some constant $C\in \mathbb{R}_{>0}$ that depends only on the two presentations.
\end{fact}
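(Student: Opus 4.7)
The plan is to build a back-and-forth translation between the two presentations and use it to transport almost-homomorphisms. Since both $\langle S|R\rangle$ and $\langle S'|R'\rangle$ present $\Gamma$, I would fix once and for all words $v_s \in \cF(S')$ representing each $s \in S$ in $\Gamma$, and words $w_{s'} \in \cF(S)$ representing each $s' \in S'$. For every $r \in R$ the substituted word $r(v) \in \cF(S')$ is trivial in $\Gamma$, hence lies in the normal closure of $R'$, so I can fix an expansion $r(v) = \prod_{j=1}^{M_r} u_{r,j}\,(r'_{r,j})^{\pm 1}\,u_{r,j}^{-1}$, and symmetrically for $r'(w)$ with $r' \in R'$ and for the ``identity checks'' $v_s(w)\,s^{-1}$ and $w_{s'}(v)\,(s')^{-1}$. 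All these expansions can be chosen with length bounded by a constant depending only on the two presentations.

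Given $f' \colon S' \to \Sym(n)$, the first step is to pull it back to $f \colon S \to \Sym(n)$ defined by $f(s) := f'(v_s)$, where $f'$ is extended to $\cF(S')$ via the universal property of free groups. Using bi-invariance and conjugation-invariance of $d_h$ on $\Sym(n)$, together with its subadditivity under products, the fixed expansion of $r(v)$ gives $d_h(f(r),\Id) \le \sum_j d_h(f'(r'_{r,j}),\Id)$. Averaging over $r \in R$ and bounding how often each $r' \in R'$ appears on the right will yield $\defect_{hom}(f) \le C_1 \cdot \defect_{hom}(f')$, where $C_1$ depends only on the two presentations. The $\rho$-stability of $\langle S|R\rangle$ then produces a genuine homomorphism $\varphi \colon S \to \Sym(N)$ with $N \ge n$ satisfying $d_h(f,\varphi) \le \rho(C_1 \defect_{hom}(f'))$.

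The second step is the push-forward. Define $\varphi' \colon S' \to \Sym(N)$ by $\varphi'(s') := \varphi(w_{s'})$. Since $\varphi$ factors through $\Gamma$ and for every $r' \in R'$ the word $r'(w)$ lies in the normal closure of $R$, one gets $\varphi'(r') = \varphi(r'(w)) = \Id$, so $\varphi' \in \Hom(\Gamma,\Sym)$. To estimate $d_h(f',\varphi')$, for each $s' \in S'$ I split
\[
d_h(f'(s'), \varphi'(s')) \le d_h(f'(s'), f'(w_{s'}(v))) + d_h(f'(w_{s'}(v)), \varphi(w_{s'})).
\]
The first summand is controlled by the fixed expansion of $(s')^{-1} w_{s'}(v)$ in the normal closure of $R'$, yielding at most $C_3 \cdot \defect_{hom}(f')$; the second equals $d_h(f(w_{s'}), \varphi(w_{s'}))$, which by the standard Lipschitz estimate for evaluating a bounded-length word under two near-homomorphisms is at most $C_2 \cdot d_h(f,\varphi)$. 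Averaging over $s' \in S'$ gives
\[
\Defect_{hom}(f') \le C_3 \defect_{hom}(f') + C_2\,\rho(C_1 \defect_{hom}(f')).
\]

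The main obstacle I anticipate is the bookkeeping of constants together with a mild abuse in the notation ``$C\cdot \rho$''. What I obtain is a rate of the form $\eps \mapsto C_3 \eps + C_2\,\rho(C_1 \eps)$ rather than a literal constant multiple of $\rho(\eps)$; the linear term is absorbed using that a rate function coming from a complete tester dominates the identity up to constants, and the inner rescaling $\rho(C_1\eps)$ is treated as a constant multiple of $\rho(\eps)$ for the rate classes of interest. A secondary technical point is to verify carefully that $d_h$ with errors is bi-invariant and subadditive across the possibly different domain sizes arising when $\varphi$ acts on $[N] \supseteq [n]$, so that the word-evaluation Lipschitz inequality used in both steps is legitimate.
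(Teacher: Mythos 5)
The paper does not prove this statement; it is quoted as a known fact from Arzhantseva--Paunescu, so there is no paper proof to compare against. That said, your proposal is the standard ``change of presentation'' argument (fix word translations $v_s$, $w_{s'}$ and normal-closure expansions, pull back $f'$ to $f$, apply stability, push the obtained homomorphism forward), and structurally it is right. The secondary technical point you flag --- bi-invariance and subadditivity of $d_h$ across different domain sizes --- does check out: for $\sigma_1,\sigma_2\in\Sym(n)$ and $\tau_1,\tau_2\in\Sym(N)$ with $n\le N$, counting agreement indices gives $|\{i\in[n]:\sigma_1\sigma_2(i)=\tau_1\tau_2(i)\}|\ge a_1+a_2-n$ where $a_j=|\{i\in[n]:\sigma_j(i)=\tau_j(i)\}|$, whence $d_h(\sigma_1\sigma_2,\tau_1\tau_2)\le d_h(\sigma_1,\tau_1)+d_h(\sigma_2,\tau_2)-(1-\tfrac{n}{N})\le d_h(\sigma_1,\tau_1)+d_h(\sigma_2,\tau_2)$, and $d_h(\sigma^{-1},\tau^{-1})=d_h(\sigma,\tau)$ follows from the bijection $i\mapsto\sigma(i)$ on agreement sets. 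So the Lipschitz estimate you invoke is legitimate.

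The main thing to be careful about is exactly the point you raise about the form of the rate. Your argument genuinely produces $\eps\mapsto C_3\eps+C_2\rho(C_1\eps)$, and for an \emph{arbitrary} rate function $\rho$ in the sense of Definition~\ref{defn:Rate_function} this is \emph{not} bounded by any constant multiple of $\rho(\eps)$: with $\rho(\eps)=2^{-1/\eps}$ and $C_1>1$ the ratio $\rho(C_1\eps)/\rho(\eps)$ blows up as $\eps\to 0$. Your two proposed patches are also not justified as stated. The claim that ``a rate function coming from a complete tester dominates the identity up to constants'' is not automatic: the inequality $\defect_{hom}(f)\le L|S|\cdot\Defect_{hom}(f)$ (with $L$ the maximal relator length) only constrains $\rho$ on defect values that are actually achieved, and a non-decreasing extension need not satisfy $\rho(\eps)\ge c\eps$ everywhere. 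Similarly, treating $\rho(C_1\eps)$ as $O(\rho(\eps))$ requires a regularity hypothesis (subadditivity or concavity, which gives $\rho(C_1\eps)\le\lceil C_1\rceil\rho(\eps)$ and $\rho(\eps)\ge\rho(1)\eps$ on $[0,1]$). The honest reading is that the statement as quoted in the paper is informal about the rate; what one actually proves is precisely the composite rate you obtained, and the ``$C\cdot\rho$'' form holds under the harmless normalization that $\rho$ is taken subadditive/concave. Since the paper only uses this fact qualitatively --- to make homomorphism stability presentation-independent --- this looseness is inconsequential, but you should state it as a hypothesis on $\rho$ (or redefine ``$C\cdot\rho$-stable'' to mean $\Defect\le C\rho(C\eps)+C\eps$) rather than claim the two fixes are automatic.
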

Hence, we say that a group $\Gamma$ is \emph{homomorphism stable} if it has \textbf{some} finite presentation $\Gamma \cong \langle S|R\rangle $ which  is $\rho$-homomorphism stable for \textbf{some} rate function $\rho$. Namely, by disregarding the exact rate $\rho$, homomorphism stability may be viewed as a group property and not  a presentation specific property. 

\begin{defn}
    A finitely presented group $\Gamma\cong \langle S|R\rangle$ is said to be \emph{sofic} if there is a sequence of functions $f_n\colon S\to \Sym(n)$ such that 
    \[
        \defect_{hom}(f_n)\xrightarrow{n\to\infty}0, 
    \]
    and for every $w\notin \langle\langle R\rangle \rangle$,\footnote{$\langle\langle R\rangle \rangle$ is the normal subgroup generated by $R$.}
    \[
        \liminf(d_h(f_n(w),\Id))\geq \frac{1}{2}.
    \]
\end{defn}
\begin{problem}[Gromov \cite{Gromov_sofic}, Weiss \cite{Weiss_Sofic}] \label{prob:sofic_groups}
    Are there non-sofic groups?
\end{problem}
The following is an easy to prove, yet quite insightful, observation. 
\begin{prop}[Glebsky-Rivera \cite{GlebskyRivera}]\label{prop:sof+stable=res_fin}
	If $\Gamma$ is homomorphism stable and sofic, then it is residually finite. 
\end{prop}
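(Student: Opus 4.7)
The plan is to combine soficity with stability in a direct way: soficity supplies, for each non-identity element of $\Gamma$, an approximate permutation representation that witnesses its non-triviality, and stability then rounds that approximation into a \emph{genuine} finite permutation representation which still detects the element. This is exactly residual finiteness.

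Fix a presentation $\Gamma\cong\langle S|R\rangle$ on which the $\rho$-stability hypothesis applies, and let $w\notin\langle\langle R\rangle\rangle$. By soficity of $\Gamma$, there is a sequence $f_n\colon S\to \Sym(n)$ with $\defect_{hom}(f_n)\to 0$ and $\liminf_n d_h(f_n(w),\Id)\geq \tfrac{1}{2}$. Applying $\rho$-homomorphism stability, for each $n$ I obtain a map $\varphi_n\colon S\to \Sym(N_n)$ (with $N_n\geq n$) inducing a genuine homomorphism $\widetilde{\varphi}_n\colon \Gamma\to \Sym(N_n)$, and satisfying
\[
d_h(f_n,\varphi_n)\;\leq\;\rho\bigl(\defect_{hom}(f_n)\bigr)\;\xrightarrow[n\to\infty]{}\;0.
\]
The ``with errors'' formula for $d_h$ forces $1-\tfrac{n}{N_n}\leq d_h(f_n(s),\varphi_n(s))$ for every $s\in S$, so averaging over $S$ gives $\tfrac{n}{N_n}\to 1$ as a by-product.

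The next step is the standard Lipschitz property of word-evaluation in $d_h$: by the triangle inequality, the isometry of the usual Hamming distance under left and right multiplication by a fixed permutation, and induction on word length, one obtains
\[
d_h(f_n(w),\varphi_n(w))\;\leq\;|w|\cdot d_h(f_n,\varphi_n)\;+\;O\!\left(|w|\bigl(1-\tfrac{n}{N_n}\bigr)\right)\;\xrightarrow[n\to\infty]{}\;0,
\]
where the second summand absorbs the domain-size mismatch between $\Sym(n)$ and $\Sym(N_n)$. On the other hand, $d_h(f_n(w),\Id_{N_n})=1-\tfrac{n}{N_n}\bigl(1-d_h(f_n(w),\Id_n)\bigr)$ converges to $\liminf_n d_h(f_n(w),\Id_n)\geq \tfrac{1}{2}$. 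Combining these via the triangle inequality yields
\[
d_h(\widetilde{\varphi}_n(w),\Id_{N_n})\;\geq\; d_h(f_n(w),\Id_{N_n}) - d_h(f_n(w),\varphi_n(w))\;\geq\; \tfrac{1}{2}-o(1),
\]
so $\widetilde{\varphi}_n(w)\neq \Id$ for all sufficiently large $n$. This exhibits, for every non-identity element of $\Gamma$, a homomorphism to a finite symmetric group that does not kill it, which is precisely residual finiteness.

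The only real technicality is the Lipschitz estimate in the presence of the ``with errors'' Hamming distance, since $f_n(w)$ and $\varphi_n(w)$ live in different symmetric groups. The saving grace -- and the step I would check first -- is that stability combined with soficity already forces $n/N_n\to 1$, so the boundary contribution from the size mismatch is automatically $o(1)$ and requires no separate argument.
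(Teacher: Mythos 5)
Your argument is correct and is essentially the standard one from Glebsky--Rivera, which the paper cites without reproving: soficity supplies an asymptotic separating family of almost-homomorphisms, stability rounds them, and the bi-invariance of the normalized Hamming metric propagates the separation of $w$ from the approximate map to the rounded genuine homomorphism. One small imprecision worth fixing: the Lipschitz bound $d_h(f_n(w),\varphi_n(w))\le |w|\cdot d_h(f_n,\varphi_n)+O(|w|(1-n/N_n))$ is not literally true, because $d_h(f_n,\varphi_n)$ is an \emph{average} over $S$ while word evaluation decomposes into a sum over the letters of $w$; one should either replace the factor $|w|$ by $|w|\cdot |S|$, or simply note that since $S$ is finite and $\Ex_{s\in S}d_h(f_n(s),\varphi_n(s))\to 0$, each individual $d_h(f_n(s),\varphi_n(s))\to 0$ and hence the sum over the letters of $w$ tends to $0$. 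Either fix leaves the remainder of the proof intact (including the observation that $n/N_n\to 1$), so the conclusion $\widetilde\varphi_n(w)\neq\Id$ for large $n$ stands.
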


Soficity is a \emph{group approximation} property. There are other well studied group approximation properties, such as hyperlinearity, property MF and $L^p$-approximations (cf. \cite{Cap_Lup_Sofic_Hyperlinear_book,DGLT}). The only known examples of non-approximable groups were constructed using the (analogous) observation of Proposition \ref{prop:sof+stable=res_fin} (See \cite{DGLT, LubotzkyOppenheim}).
Though more sophisticated  methods for constructing non-sofic groups were recently suggested (cf. \cite{BowenBurton,dogon2023flexible}), 
 they still require the proof of homomorphism stability of certain groups.\footnote{There are also suggested methods for constructing non-sofic groups that \textbf{do not} use homomorphism stability, at least not in such a direct manner. E.g., transforming the answer reduction part of the compression theorem in \cite{MIPRE} into a linear constraint system game (cf. \cite{slofstra_2019}) would imply the existence of a non-sofic group. Problem \ref{prob:sofic_groups} is still a major open problem, with many suggested tackling angles. We chose to emphasize its relation to homomorphism stability.} We further discuss group soficity in Part II of this paper \cite{CL_part2}, and even suggest a new paradigm for potentially resolving Problem \ref{prob:sofic_groups} in the positive.

\section{\textbf{Covering stability of polygonal complexes}}\label{sec:Covering_stability_of_polygonal_complexes}
Let us move now to another stability problem.
\subsection{Graphs a la Bass--Serre}\label{sec:graphs_a_la_BS}
\begin{defn}
    A graph (a la Bass--Serre, see \cite{Trees_Serre,KOLODNER2021595})  $\cX$ consists of the following data:
    \begin{enumerate}
      
    \item A set $V=V(\cX)$ of vertices. 
    \item A set $\overrightarrow{E}=\raE(\cX)$ of directed edges, together with functions $\tau,\iota\colon \raE\to V$ and an involution $\overline{\ \ ^{\ ^{\ }}}\colon \raE\to \raE$ satisfying 
    \[
\forall e\in \raE\  \colon \ \ \tau(\overline{e})=\iota(e).
    \]
    The function $\tau$ is the terminal point (or end point) function, the function $\iota$ is the initial point (or origin point) function and $\overline{\ \ ^{\ ^{\ }}}$ is the reverse edge function. Namely, an edge $e$ can be visualized as $x\xrightarrow{e} y$, where $x=\iota(e)$ and $y=\tau(e)$. Moreover, we have $y\xrightarrow {\bar{e}} x$. 
    \end{enumerate}
\end{defn}
\begin{rem}
Note the following:
\begin{itemize}
    \item A graph (a la Bass-Serre) is essentially an undirected multi-graph. To make this statement precise, define $[e]=\{e,\bar e\}$ and  $E(\cX)=\{[e]\mid e\in \raE(\cX)\}$. Then $(V(\cX),E(\cX))$ is a multi-graph where the endopints of $[e]\in E(\cX)$ are $\iota(e)$ and $\tau(e)$. 

    Furthermore, multi-graphs can be viewed as $1$-dimensional CW complexes (cf. \cite{Hatcher_Alg_Top}) by defining $V(\cX)$ to be the $0$-dimensional cells, $E(\cX)$ to be the $1$-dimensional cells, and with gluing maps induced by $\tau$ and $\iota$. Hence, we can view graphs a la Bass-Serre as topological spaces.
    \item We may abuse notation and write $xy$ for the edge $x\xrightarrow{e} y$, though there may be several edges with the exact same origin and endpoint. 
   
\end{itemize}
\end{rem}
\begin{defn}\label{defn:paths}
    A \emph{path} $\pi$ in a graph $\cX$ is a sequence of edges $e_1...e_\ell$ such that each $e_i\in  \raE$ and for every $1\leq i\leq \ell-1$, we have $\tau(e_i)=\iota(e_{i+1})$. By denoting $x_i:=\tau(e_i)\in V$ and $x_0:=\iota(e_1)\in V$, we can graphically represent the path $\pi$ by 
    \[
x_0\xrightarrow{e_1}x_1\xrightarrow{e_2}...\xrightarrow{e_\ell}x_\ell.
    \]
    The integer $\ell(\pi)=\ell$ is the \emph{length} of the path.
    The path $\pi$ is \emph{non-backtracking} (or reduced) if $e_{i+1}\neq\bar{e}_i$ for every $1\leq i\leq \ell-1$.  For a path $\pi$, its \emph{inverse} or \emph{reverse orientation} is 
    \[
        \bar \pi =x_\ell \xrightarrow{\bar e_\ell}...\xrightarrow{\bar{e}_1}x_0.
    \] The path $\pi$ is \emph{closed} if $x_0=x_\ell$, and \emph{open} otherwise. When $\pi$ is closed, it has $\ell$  shifts $e_k...e_\ell e_1...e_{k-1}$ and $\ell$ inverses $\bar e_{k-1}...\bar e_1\bar e_\ell... \bar e_k$ which are also closed paths in $\cX$. We call all these shifts and inverses the \emph{orientations} of $\pi$, and denote the collection of all of them by $[\pi]$. The path $\pi$ is \emph{cyclically reduced} if all its orientations are non-backtracking.
    
     %   For every $i\in \mathbb{N}$, let $\frE_i$ be the map that outputs the $i^{\rm th}$ edge of a path. Namely, for $\pi=e_1...e_\ell$, we have
 %    \[
%\frE_i(\pi)=\begin{cases}
%e_i & i\leq \ell,\\
%{\frak {error}} & i>\ell.
 %    \end{cases}
  %   \]

\end{defn}

\begin{defn} [Combinatorial maps between graphs]
    A \emph{combinatorial map} $f\colon \cY\to \cX$ between two graphs is a function that maps vertices of $\cY$ to vertices of $\cX$ and edges of $\cY$ to edges of $\cX$, and is compatible with the structural data of the graph. Namely, it preserves initial points, terminal points and edge flips:
        \[
\forall e\in \raE(\cY)\ \colon \ \ \tau_\cX(f(e))=f(\tau_\cY(e)),\ \iota_\cX(f(e))=f(\iota_\cY(e)),\ f(\overline{e})=\overline{f(e)}.
        \] 
\end{defn}

\begin{rem}\label{rem:combi_maps_on_paths}
    Given a combinatorial map $f\colon \cY\to\cX$, we can extend it to paths in $\cY$. Namely, if $\pi=e_1...e_\ell$ is a path in $\cY$, then $f(\pi)=f(e_1)...f(e_\ell)$ is a path in $\cX$.  Moreover, combinatorial maps induce a continuous map between the graphs as topological spaces.
\end{rem}

Recall that a continuous function $f\colon \cY\to \cX$ between two topological spaces is a (topological) \emph{covering} if for every point $x\in \cX$ there is an open neighborhood $U$ of $x$ such that $f^{-1}(U)$ is a disjoint union of open sets, each of which homeomorphic to $U$ via $f$. If $\cX$ is (path) connected, then the \emph{degree} of the covering, which is the cardinality of   $|f^{-1}(x)|$ for any of the points $x\in \cX$,  is well defined (cf. Chapter 1.3 in \cite{Hatcher_Alg_Top}).  When $|f^{-1}(x)|=n<\infty$, we say that $f$ is a degree $n$ covering, or just $n$-covering.

\begin{defn}
    A combinatorial map $f\colon \cY\to \cX$ between graphs is a (combinatorial) \emph{covering} if it is a topological covering.
\end{defn}

\begin{rem}
    Graph coverings\footnote{Graph coverings are sometimes called \emph{graph liftings} in the literature. This notion is a bit confusing because of the path lifting and homotopy lifting lemmas (which are used for example  in the proof sketch of Fact \ref{fact:completeness_of_covering_tester}). Hence, we stick with coverings for these objects.} is an intensely studied object. They were used both as a random model for graphs \cite{Linial_Amit2002random, amit2001random,MSS_Ramanujan,Puder_expander_coverings} and as a building block in a recent construction of good locally testable codes and good quantum codes \cite{LTC_Panteleev_Kalachev}. 
\end{rem}

\begin{fact} \label{fact:char_of_graph_coverings}
    Let $\cX$ and $\cY$ be finite graphs. Let $f\colon \cY\to \cX$ be a combinatorial map between them. Then, $f$ is a covering if and only if for every vertex $y\in V(\cY)$, the star of $y$ is mapped bijectively to the star of $f(y)$. Namely, if $E_y(\cY)=\{e\in \raE(\cY)\mid \tau(e)=y \}$, then $f|_{E_y(\cY)}$ is a bijection onto $E_{f(y)}(\cX)$. 
\end{fact}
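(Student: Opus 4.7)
The plan is to verify both directions by translating between the topological definition of covering (which involves evenly covered neighborhoods) and the purely combinatorial star condition. Recall that $\cX$ and $\cY$ are viewed as $1$-dimensional CW complexes, so every point lies either in the interior of an edge or at a vertex, and these two cases will be handled separately.

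For the forward direction, suppose $f\colon \cY\to\cX$ is a topological covering, and fix $y\in V(\cY)$ with $x=f(y)$. First I would choose an evenly covered open neighborhood $U\subseteq \cX$ of $x$ that is ``small enough'' to be a star neighborhood, namely the union of $\{x\}$ with half-open initial segments of each edge in $E_x(\cX)$, not reaching any other vertex. Then $f^{-1}(U)$ is a disjoint union of sheets, one of which contains $y$ and is mapped homeomorphically onto $U$. Since vertices can only map to vertices (as $f$ is combinatorial) and each of the half-open edge segments must lift to a corresponding half-open edge segment emanating from $y$, this shows that $f$ restricted to $E_y(\cY)$ is a bijection onto $E_x(\cX)$.

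For the backward direction, assume the star condition holds at every vertex of $\cY$. I would construct evenly covered neighborhoods by cases. For a point in the interior of an edge $[e]\in E(\cX)$, take a small open sub-interval $U\subseteq [e]$; since $f$ maps each edge of $\cY$ either to $[e]$ (by a homeomorphism determined by the preservation of $\iota$, $\tau$, and edge flip) or to another edge, $f^{-1}(U)$ is a disjoint union of open intervals, each mapped homeomorphically onto $U$. For a vertex $x\in V(\cX)$, take the star neighborhood $U$ of $x$ as above; the star bijection hypothesis at each $y\in f^{-1}(x)$ guarantees that the star neighborhood $U_y$ of $y$ maps homeomorphically onto $U$. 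The key subtlety is to check that $f^{-1}(U)$ equals the disjoint union $\bigsqcup_{y\in f^{-1}(x)}U_y$; this follows because any point in $f^{-1}(U)$ that is not a vertex lies in the interior of some edge $e'\in \raE(\cY)$ with $f(e')\in E_x(\cX)$, and such an $e'$ has an endpoint in $f^{-1}(x)$, forcing the point into one of the $U_y$'s.

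The main obstacle, though a mild one, is precisely this last bookkeeping step: verifying that different sheets $U_y$ are disjoint and that their union covers all of $f^{-1}(U)$. Disjointness uses the hypothesis that the star map is a bijection (not merely a surjection) at each $y$, which prevents two different half-edges at the same preimage vertex from being identified; exhaustiveness uses the fact that the star neighborhood $U$ was chosen small enough not to reach any other vertex of $\cX$, so every preimage point in $f^{-1}(U)$ must sit on a half-edge emanating from some preimage of $x$. Finiteness of $\cX$ and $\cY$ is not really essential for the argument, but it guarantees that these star neighborhoods have a sensible interpretation and that $f^{-1}(x)$ is finite.
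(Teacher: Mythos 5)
The paper states this as a Fact with no proof at all (it is a well-known characterization of graph coverings, asserted without argument), so there is no in-paper proof to compare against. Your argument is correct and is the standard way to establish the equivalence: shrink an evenly-covered neighborhood to a star neighborhood in the forward direction, and build evenly-covered neighborhoods case-by-case (interior-of-edge points vs.\ vertices) in the reverse direction. The bookkeeping at the end is sound, though I would separate two issues you blend together: the star-bijection hypothesis at $y$ is what makes each individual sheet $U_y$ map \emph{homeomorphically} (injectivity of the star map gives injectivity of $f|_{U_y}$; surjectivity gives that $f(U_y)=U$), while the pairwise \emph{disjointness} of $U_{y_1}$ and $U_{y_2}$ for $y_1\neq y_2$ is simply a consequence of choosing the star neighborhoods small enough (terminal half-open segments not reaching the opposite endpoint), and does not actually use the bijection hypothesis. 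With that clarification, the argument is complete; finiteness of the graphs plays no essential role, as you correctly note.
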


 A combinatorial map between graphs $f\colon \cA\to \cX$ is said to be an \emph{embedding} if it is a set theoretic injection. A \emph{subgraph} of $\cX$ is the image of some embedding into it. The \emph{edit distance} between a graph $\cX$ and a subgraph of it $\cA$ is
 \[
 d_{Edit}(\cX,\cA)=|E(\cX)|-|E(\cA)|,
 \]
 namely, this is the number of edges needed to be deleted to move from $\cX$ to $\cA$.
 Given two graphs $\cX$ and $\cZ$, not necessarily embedded in one another, we can define the edit distance between them using the  largest graph $\cA$  which embeds into both $\cX$ and $\cZ$.\footnote{The \emph{largest} in this case is according to the number of edges.} The \emph{edit distance} between $\cX$ and $\cZ$ would be the maximum between their distances to $\cA$. Namely, 
   \begin{equation}\label{eq:Edit_distance}
     d_{Edit}(\cX,\cZ)=\max(d_{Edit}(\cX,\cA),d_{Edit}(\cZ,\cA))=\max(|E(\cX)|,|E(\cZ)|)-|E(\cA)|.
   \end{equation}
   The normalized version of the edit distance will be
   \begin{equation}\label{eq:norm_edit_distance}
        d_{edit}(\cX,\cZ)=1-\frac{|E(\cA)|}{\max(|E(\cX)|,|E(\cZ)|)}.
   \end{equation}

\begin{rem}\label{rem:enc_of_graphs}
    There is an encoding of graphs for which the edit distance \eqref{eq:Edit_distance} agrees with the Hamming distance \eqref{eq:Hamming_dist} (and similarly for the normalized versions): The vertex set $V(\cX)$ will always be  $\{1,...,|V(\cX)|\}$. The edge set $E(\cX)$ will   always be $\{1,...,|E(\cX)|\}$. Then, the function $\iota\times\tau\colon E(\cX)\to V(\cX)\times V(\cX)$, which outputs for each edge its origin and terminus, encodes the graph (note that we essentially chose an orientation for every edge $[e]=\{e,\bar e\}$). There are many encodings of the same graph (one can permute $V(\cX)$ and $E(\cX)$ arbitrarily, as well as choose different orientations of the edges). Hence, the edit distance between two graphs \textbf{is} the Hamming distance between the two most compatible encodings of them. 

    One may argue that we did not choose the most natural encoding of graphs. A multi-graph can be encoded by its adjacency matrix (see \eqref{eq:adjacency_matrix}), and the resulting edit distance will be slightly different --- it would be the sum of $d_{Edit}(\cX,\cA)$ and $d_{Edit}(\cZ,\cA)$ instead of their maximum as in \eqref{eq:Edit_distance}. But, up to a factor of $2$, they are the same.
\end{rem}

Given a graph $\cX$, an \emph{$\cX$-labeled} graph is a graph $\cY$ with a combinatorial map $\Phi_\cY\colon \cY\to \cX$ which we call \emph{the labeling}. A morphism between $\cX$-labeled graphs is a combinatorial map $f\colon \cY\to \cZ$  that commutes with the labelings, namely $\Phi_\cY=\Phi_\cZ \circ f$. Embeddings and subgraphs of $\cX$-labeled graphs can be defined in an analogous way to before, and hence the edit distance is defined on this category.\footnote{Note that by choosing a weighting system $w$ on the edges of $\cX$, we get a weighted edit distance on $\cX$-labeled graphs by paying $w(\Phi(e))$ when changing $e$. This is shortly discussed in Section \ref{sec:alt_dist}.}

\subsection{Polygonal complexes}
As mentioned above, graphs are $1$-dimensional CW complexes. We now wish to add $2$-dimensional cells to the mixture in a combinatorial way that will allow us to study them through a  property testing lens.

\begin{defn}
A \emph{polygonal complex} $\cX$ is a graph with extra data: 
A set $\raP=\raP(\cX)$ of cyclically reduced closed paths in $\cX$. The elements of  $\raP$ are called (oriented) \emph{polygons}. Similar to the way we always include both $e$ and $\bar e$ as edges in $\raE(\cX)$, we assume that all the orientations  of a polygon must be  in $\raP$. Namely, if  $\pi=e_1...e_\ell\in \raP$, then 
\[
\forall 1\leq k\leq \ell\ \colon\ \ e_k...e_\ell e_1...e_{k-1}\in \raP\quad \textrm{and}\quad \bar e_{k-1} ... \bar e_1 \bar e_\ell ...\bar e_k \in \raP.
\]
We denote the set of un-oriented polygons by $P(\cX)=\{[\pi]\mid \pi\in \raP(\cX)\}$.

For visual examples of polygons see Figure \ref{tikz:Polygon}. As oppose to edges, where there may be multiple edges with the same origin and end point, we assume that there are no two polygons with the same pasted path. Namely, a polygon is uniquely defined by its closed path boundary.
  
\begin{figure}
\begin{center}
    \begin{tikzpicture}[scale=0.8]
    \node [draw, color=black, shape=circle] (x) at (6,-3){};
    \node [draw, color=black, shape=rectangle] (Name) at (9,0){$\pi_1$};
    \node [draw, color=black, shape=circle] (y) at (12,-3){};
    \node [draw, color=black, shape=circle] (z) at (6,3){};
    \node [draw, color=black, shape=circle] (w) at (12,3){};
     \node[purple] (T) at (13.2,4){$e_0$};
     \draw[purple, ->, solid] (x)--(y) node[midway,above]{$e_1$};
\draw[purple, ->, solid] (y)--(w) node[midway,right]{$e_2$};
\draw[purple, ->, solid] (w)--(z) node[midway,above]{$e_3$};
\draw[purple, ->, solid] (z)--(x) node[midway,left]{$e_4$};
\draw[purple,->] (w) to  [out=0,in=90,looseness=15] (w) ; 

\node[draw, regular polygon,regular polygon sides=5]{$\pi_2$};
\node [draw, color=black, shape=circle] (A) at (12:3){};
\node [draw, color=black, shape=circle] (B) at (90:3){};
\node [draw, color=black, shape=circle] (C) at (162:3){};
\node [draw, color=black, shape=circle] (D) at (234:3){};
\node [draw, color=black, shape=circle] (E) at (306:3){};
  \draw[cyan, ->, solid] (A)--(B) node[midway,above]{$e'_1$};
  \draw[cyan, ->, solid] (B)--(C) node[midway,above]{$e'_2$};
  \draw[cyan, ->, solid] (C)--(D) node[midway,left]{$e'_3$};
  \draw[cyan, ->, solid] (D)--(E) node[midway,above]{$e'_4$};
  \draw[cyan, ->, solid] (E)--(A) node[midway,right]{$e'_5$};

 \node [draw, color=black, shape=circle] (*) at (4,-5){};
     \draw[purple,->] (*) to  [out=225,in=315,looseness=15] (*) ; \node[purple] at (4,-6.5) {$s_1$};
     \draw[cyan,->] (*) to  [out=330,in=70,looseness=15] (*) ; \node[cyan] at (5.2,-4) {$s_2$};
     \draw[brown,->] (*) to  [out=110,in=200,looseness=15] (*) ; \node[brown] at (2.8,-4) {$s_3$};
    \end{tikzpicture}
    \begin{adjustwidth*}{0em}{0em}
    \caption{Examples of polygons. $\pi_1=e_1e_2e_3e_4$ is a length $4$ polygon. We can also construct length $5$ polygons of the form $\pi=e_1e_2e_0e_3e_4$ or $\pi'=e_1e_2\bar{e_0}e_3e_4$. $\pi_2=e'_1e'_2e'_3e'_4e'_5$ is a length $5$ polygon. Lastly, every word of length $\ell$ in the free group on $\{s_1,s_2,s_3\}$ defines a length $\ell$ polygon on the bouquet of circles.}\label{tikz:Polygon}
    \end{adjustwidth*}
    \end{center}
    \end{figure}
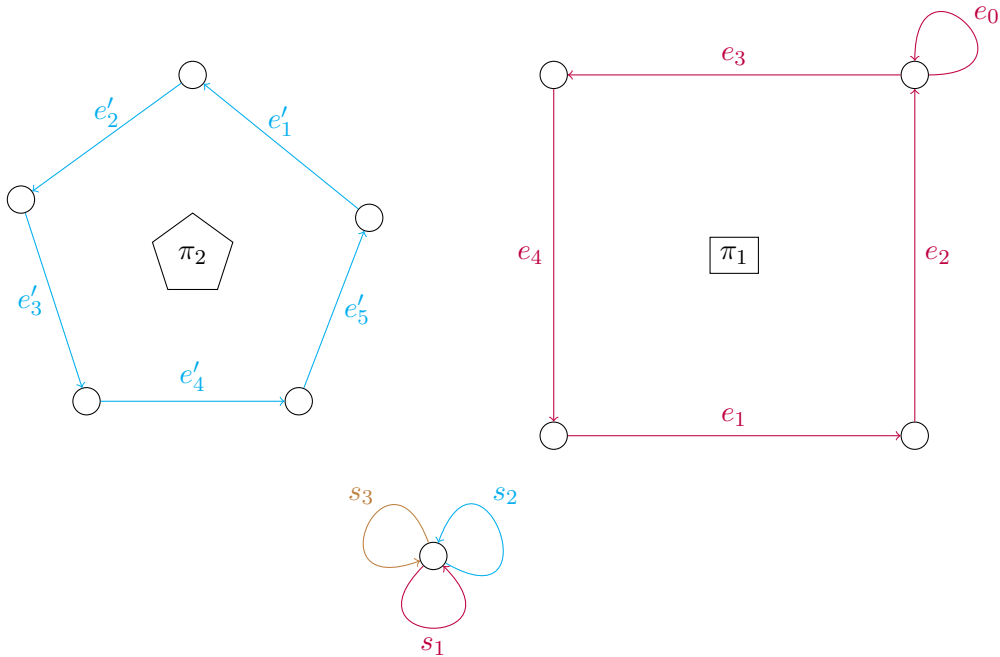
\end{defn}
\begin{rem} Note the following:
\begin{itemize}
    \item  Polygonal complexes are graphs with added data. To be able to distinguish between the graph structure and the whole complex $\cX$, we denote by $G(\cX)$ the \emph{underlying graph} of $\cX$.
    \item Polygonal complexes are $2$-dimensional CW complexes (hence, topological spaces): Their $1$-skeleton is induced by the underlying graph $G(\cX)$. For the $2$-dimensional cells, we paste a disc for every un-oriented polygon $[\pi]$, with the gluing map being according to some representative $\pi$. We therefore use the standard notation for $i$-dimensional cells of a CW complex when suitable:
    \begin{equation}\label{eq:i-cells}
        \cX(0)=V(\cX)\ ,\quad\cX(1)=E(\cX)\ ,\quad \cX(2)=P(\cX),
    \end{equation}
    and 
    \begin{equation}\label{eq:oriented_i-cells}
        \overrightarrow{\cX}(1)=\raE(\cX)\ ,\quad\overrightarrow{\cX}(2)=\raP(\cX)
    \end{equation} for the oriented versions.
\end{itemize}
     
\end{rem}

\begin{defn}
    Let $\cX,\cY$ be two polygonal complexs.  A combinatorial map  $f\colon G(\cY)\to G(\cX)$ is called a \emph{combinatorial map between polygonal complexes}, if for every $\pi \in \raP(\cY)$, we have $f(\pi)\in \raP(\cX)$.
\end{defn}

The notions of embeddings, subcomplexes and coverings generalize to the polygonal complex setup in the natural way.

\subsection{Almost covers}
Let $\cG$ be a graph and $\cX$ a \emph{finite} polygonal complex. Let $f\colon \cG\to G(\cX)$ be a \emph{finite} covering.  Let $\pi=x\xrightarrow{e_1}x_1\xrightarrow{e_2}...\xrightarrow{e_{\ell-1}} x_{\ell -1}\xrightarrow{e_\ell}x$ be a polygon in $\cX$. By the path lifting property of covering maps (See Proposition 1.30, page 60, in \cite{Hatcher_Alg_Top}), for every $x'\in f^{-1}(x)$ there exists a unique path $\pi'$ in $\cG$ that begins at $x'$ and satisfies $f(\pi')=\pi$. The lifted path $\pi'$ may be closed or open.

\begin{fact}\label{fact:completeness_of_covering_tester}
Let $f\colon \cG\to G(\cX)$ be a finite covering. Then, all the lifts of all the polygons in $\raP(\cX)$ are closed if and only if polygons can be added to $\cG$ such that $f$ becomes a covering map of $\cX$  (namely, $\cG$ is the $1$-skeleton of a covering of $\cX$ via $f$). 
\end{fact}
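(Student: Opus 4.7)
The plan is to prove both implications by combining Fact \ref{fact:char_of_graph_coverings} (the combinatorial characterization of graph coverings) with the path-lifting property of topological coverings. Throughout, $f\colon \cG \to G(\cX)$ is a finite graph covering, so every path in $G(\cX)$ has a unique lift starting at any prescribed vertex in the fiber over its origin, and the restriction of $f$ to the half-edges at any vertex is a bijection onto the half-edges at its image.

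The easy direction (``if'') is essentially homotopy-theoretic. Suppose polygons can be adjoined to $\cG$ so that it becomes a polygonal complex $\cG'$ and $f$ a topological covering $\cG' \to \cX$. Each polygon $\pi \in \raP(\cX)$ bounds a $2$-cell of $\cX$, which is simply connected and therefore lifts to a $2$-cell of $\cG'$ based at any chosen $y' \in f^{-1}(\iota(\pi))$. The boundary of this lifted $2$-cell is a closed path in the $1$-skeleton $\cG$ that lifts $\pi$ starting at $y'$, so by uniqueness of path lifting it \emph{is} the lift of $\pi$ at $y'$. In particular, the lift is closed.

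For the harder direction, I would explicitly construct the polygonal structure on $\cG$: declare $\raP(\cG)$ to consist of all lifts $\tilde\pi_{y'}$ of all $\pi \in \raP(\cX)$, where $y'$ ranges over $f^{-1}(\iota(\pi))$. These lifts are closed by hypothesis. Several routine verifications are then required: each lift is cyclically reduced (because $\pi$ is and because half-edges lift injectively), $\raP(\cG)$ is closed under cyclic shifts and reversals (because $\raP(\cX)$ is and because lifting commutes with these operations up to re-basing), and distinct elements of $\raP(\cG)$ have distinct underlying closed paths (the same property holds in $\cX$ and is preserved by $f$).

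The main obstacle is verifying that $f \colon \cG \to \cX$, now equipped with $\raP(\cG)$, is actually a topological covering and not merely a combinatorial map preserving polygons. On the interior of each $2$-cell this is clear by construction. The nontrivial step is the local condition at each vertex $y' \in V(\cG)$ with $x := f(y')$, namely that the link of $y'$ in $\cG$ maps isomorphically to the link of $x$ in $\cX$. Since the link at a vertex has the half-edges as its vertex set and the corners of incident polygons as its edges, the bijection on vertices is immediate from Fact \ref{fact:char_of_graph_coverings}. For the bijection on edges, fix a corner of $\cX$ at $x$ coming from position $i$ of some $\pi = f_1 \cdots f_\ell \in \raP(\cX)$ (so $\tau(f_i) = x$). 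Let $f_i'$ be the unique edge of $\cG$ with $\tau(f_i') = y'$ and $f(f_i') = f_i$, and set $z' = \iota(f_i')$. The lift of the cyclic shift $f_i f_{i+1} \cdots f_\ell f_1 \cdots f_{i-1}$ starting at $z'$ belongs to $\raP(\cG)$ by construction, starts with the edge $f_i'$, and therefore contributes a corner at $y'$ that maps to the chosen corner of $\cX$. Uniqueness of this lifted polygon, and hence injectivity of the corner assignment, follows once again from uniqueness of path lifting. This completes the link isomorphism and the proof.
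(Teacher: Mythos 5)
Your approach is genuinely different from the paper's. The paper disposes of this fact with a one-line appeal to the classification of coverings by conjugacy classes of subgroups of $\pi_1(\cX,*)$ (Hatcher, Theorem 1.38), i.e.\ it uses the abstract machinery of covering space theory. You instead construct the $2$-dimensional structure on $\cG$ explicitly and verify a link condition at each vertex, which is a more hands-on, elementary route and does not pass through the fundamental group at all.

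However, the hard direction has a genuine gap. Declaring $\raP(\cG)$ to be the \emph{set} of all lifts can produce too few $2$-cells. The problem occurs when some $\pi\in\raP(\cX)$ is invariant under a non-trivial cyclic shift and the graph covering $f$ separates that period, so that two lifts $\tilde\pi_{y'}$ and $\tilde\pi_{z'}$ with $y'\neq z'$ are cyclic shifts of one another and hence determine the \emph{same} un-oriented polygon. The canonical example is $\cX=\cX_{\langle s\mid s^2\rangle}$ (topologically $\mathbb{RP}^2$: one vertex $v$, one loop $e$, one relator $\pi=ee$) with $\cG$ the degree-$2$ cover of the loop, with vertices $v_1,v_2$ and edges $e_1,e_2$. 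Every lift of $\pi$ is closed, but the lift at $v_1$ is $e_1e_2$ and the lift at $v_2$ is $e_2e_1$, and $[e_1e_2]=[e_2e_1]$. So your $\raP(\cG)$ contains exactly one un-oriented polygon; attaching it yields a disk, not $S^2$, and $f$ is not a covering. This is visible in your injectivity step: the two distinct corners of $[\pi]$ at $v$ (positions $i=1$ and $i=2$) both produce $z'=v_2$ and the same lifted cyclic shift $e_2e_1$, so you assign them the \emph{same} corner at $v_1$ — the link at $v_1$ has one polygon-corner while the link at $v$ has two. The correct degree-$2$ cover $S^2\to\mathbb{RP}^2$ needs two distinct $2$-cells pasted along the same boundary $[e_1e_2]$, which a set of closed paths $\raP(\cG)$ cannot encode. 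The fix is to index the $2$-cells of $\cG'$ by pairs $([\pi],y')$ with $[\pi]\in P(\cX)$ and $y'\in f^{-1}(\iota(\pi))$, i.e.\ allow a multiset of $2$-cells; but then $\cG'$ leaves the paper's strict category of polygonal complexes (``a polygon is uniquely defined by its closed path boundary'') and lives in the ambient category of $2$-dimensional CW complexes, which is exactly what the paper's $\pi_1$-based argument handles automatically. Your argument is correct whenever no polygon of $\cX$ is periodic (in particular for simplicial complexes), but the fact as stated applies to all polygonal complexes, including presentation complexes of relators such as $s^2$.
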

\begin{proof}[Proof idea]
This is essentially the correspondence between (conjugation orbits of) subgroups of the fundamental group $\pi_1(\cX,*)$ and (connected) coverings of $\cX$ (See Theorem 1.38, page 67, in \cite{Hatcher_Alg_Top}). 
\end{proof}

We call a function $f\colon \cG\to G(\cX)$ that can be completed to a covering of $\cX$ (as in Fact \ref{fact:completeness_of_covering_tester}) 
 a \emph{genuine covering} of $\cX$.
This suggests the following tester for whether a graph covering $f\colon \cG\to G(\cX)$ is a genuine covering of $\cX$:\footnote{In Algorithm \ref{alg:Covering_test} we sample a polygon uniformly at random. One could have sampled a polygon using a different distribution. More on that in Section \ref{sec:alt_dist}.}

\begin{algorithm}[H]
\caption{\quad\texttt{Covering tester}}\label{alg:Covering_test}
\textbf{Input:} $f:\cG\rightarrow G(\cX),$ a finite covering of the $1$-skeleton of $\cX$. \\
\textbf{Output:} Accept or Reject.

\begin{algorithmic}[1]

\STATE Pick $[\pi]\in P(\cX)$ uniformly at random. Then, pick a representative $\pi$ of $[\pi]$ uniformly at random. 
\STATE Let $x$ be the starting vertex of $\pi$. Pick $x'\in f^{-1}(x)$ uniformly at random. 
\STATE{If the lift $\pi'$ of $\pi$ to $x'$ is a closed path in $\cG$,
return Accept.}

\STATE {Otherwise, return Reject.}

\end{algorithmic}
\end{algorithm}

Algorithm \ref{alg:Covering_test} may be fitted to the setup of Section \ref{sec:Prop_testing} using an extension of the encoding system suggested in Remark \ref{rem:enc_of_graphs}. In this case: 
\begin{itemize}
    \item $\frF$ is the collection of all graph coverings $f\colon \cG\to G(\cX)$; 
    \item $\frP$ is the collection of all genuine coverings of $\cX$;
    \item Given $f_1\colon \cG_1\to G(\cX)$ and $f_2\colon \cG_2\to G(\cX)$, the normalized Hamming distance \eqref{eq:normalized_Hamming_distance} between them is the normalized edit distance \eqref{eq:norm_edit_distance} between them as $G(\cX)$-labeled graphs. Hence, the \emph{covering global defect} of $f$ is the normalized distance of $\cG$ to the closest genuine covering of $\cX$ in the category of $G(\cX)$-labeled graphs. Namely,
\[
\Defect_{cover}(f)=\inf\left\{d_{edit}(\cG,G(\cY))\middle\vert \varphi\colon \cY\xrightarrow{{\rm covering}}\cX\right\}.
\]
\item As before,  the \emph{covering local defect} of $f$ is the rejection probability in the covering tester (Algorithm \ref{alg:Covering_test}), namely
\[
\defect_{cover}(f)=\Pro_{\substack{[\pi]\in P(\cX)\\ x'\in f^{-1}(x)}}\left[\textrm{The\ lift\ of}\ \pi\  \textrm{to\ } x'\textrm{\ is\ open}\right].
\]
\end{itemize}

\begin{defn}\label{defn:covering_stability}
    Let $\rho$ be a rate function, as in Definition \ref{defn:Rate_function}. A polygonal complex is \emph{$\rho$-covering stable} if for every graph $\cG$ and covering map $f\colon \cG\to G(\cX)$, we have
\[
\Defect_{cover}(f)\leq \rho(\defect_{cover}(f)).
\]
\end{defn}

In the next section, we discuss our motivation to study $\rho$-covering stability, as well as the difference between our covering tester (Algorithm \ref{alg:Covering_test}) and the Dinur--Meshulam one (Algorithm \ref{alg:DM_Covering_test}). 
\subsection{The Dinur--Meshulam framework}\label{sec:Dinur-Meshulam_framework}
In \cite{quantum_soundness_tensor_codes}, Ji--Natarajan--Vidick--Wright--Yuen proved a homomorphism stability result, in unitaries,  that has a very good trade-off between presentation length and  stability rate of the studied groups (see \cite{CVY_efficient}).
In an ongoing project \cite{BCLV_subgroup_tests}, we try to prove a permutation analogue of their result. Covering stability, as in Definition \ref{defn:covering_stability}, arises naturally in our proof strategy.  In \cite{Dinur-Meshulam}, Dinur--Meshulam studied a similar  problem.  The main difference between the Dinur--Meshulam setup and ours is that they think of the degree parameter $n$ as a \textbf{constant}. Namely, in their alternative to Algorithm \ref{alg:Covering_test}, instead of sampling a polygon $[\pi=x\to...\to x]$ \textbf{and} a starting position from $x'\in f^{-1}(x)$, they \textbf{only} sample a polygon. Then, they check that its lift to \textbf{every} vertex in the fiber $f^{-1}(x)$ is closed.

\begin{algorithm}[H]
\caption{\quad\texttt{Dinur-Meshulam covering tester}}\label{alg:DM_Covering_test}
\textbf{Input:} $f:\cG\rightarrow G(\cX),$ a finite covering of the $1$-skeleton of $\cX$. \\
\textbf{Output:} Accept or Reject.

\begin{algorithmic}[1]

\STATE Pick $[\pi]\in P(\cX)$ uniformly at random. 
\STATE Let $x$ be the starting vertex of some uniformly chosen representative $\pi$ of $[\pi]$. 
\STATE{If \textbf{for every} $x'\in f^{-1}(x)$ the lift $\pi'$ of $\pi$ to $x'$ is a closed path in $\cG$,
return Accept.}

\STATE {Otherwise, return Reject.}

\end{algorithmic}
\end{algorithm}

In Section \ref{sec:equivalence_covering_cocycles}, we give a one to one correspondence between $1$-cochains of a polygonal complex $\cX$ and coverings of its $1$-skeleton. To encapsulate the Dinur--Meshulam Algorithm, one can use this equivalence and the notations of Section \ref{sec:Cocycle_stability}, but with a different metric on $\Sym(n)$. They  use the discrete distance $$\forall \sigma,\tau \in \Sym(n)\ \colon\ \ d_D(\sigma,\tau)=\begin{cases}
    1 & \sigma\neq \tau,\\
    0& \sigma =\tau,
\end{cases}$$ 
while we are using the normalized Hamming distance on $\Sym(n)$.
This change may  seem minor, but it allows one to prove beautiful results in their setup (See Theorem 1.2 in  \cite{dikstein2023coboundary}) that seem very difficult to prove in our setting. The motivation to insist on our setting is coming from the fact that it can lead to a solution to Problem \ref{prob:sofic_groups}. For more details, see the open problem section of Part II of this paper \cite{CL_part2}.

There is another strengthening   of Algorithm \ref{alg:Covering_test} which is perpendicular to the Dinur--Meshulam one. Instead of sampling a single polygon with a single starting position (as done in Algorithm \ref{alg:Covering_test}), one can sample \textbf{for every} polygon $[\pi=x\to...\to x]\in P(\cX)$ a uniformly random  starting position  $x'_\pi\in f^{-1}(x)$. Then, the tester checks for each polygon and sampled starting position whether the lift of $\pi$ to $x'_\pi$ is closed or not. Namely:

\begin{algorithm}[H]
\caption{\quad\texttt{$L^\infty$ covering tester}}\label{alg:L_infty_Covering_test}
\textbf{Input:} $f:\cG\rightarrow G(\cX),$ a finite covering of the $1$-skeleton of $\cX$. \\
\textbf{Output:} Accept or Reject.

\begin{algorithmic}[1]

\STATE For every $[\pi]\in P(\cX)$ let $x_{\pi}$ be the starting vertex of a uniformly chosen representative $\pi$ of $[\pi]$. 
\STATE{Sample for every $[\pi]\in P(\cX)$ a lift $x'_\pi\in f^{-1}(x_\pi)$.}
\STATE {If for every $\pi$ and sampled $x'_\pi$, the lift  $\pi'$ of $\pi$ to $x'_\pi$ is  closed in $\cG$,
return Accept.}

\STATE {Otherwise, return Reject.}

\end{algorithmic}
\end{algorithm}

 When thinking of the \textbf{complex} $\cX$ as a constant, this tester is essentially equivalent to Algorithm \ref{alg:Covering_test}. To encapsulate the difference between Algorithm \ref{alg:L_infty_Covering_test} and Algorithm \ref{alg:Covering_test}, one can replace the expectations over polygons in our analysis with a maximum over polygons. The same change can be done to Algorithms \ref{alg:hom_test} and \ref{alg:Cocycle_test}, and we call these \emph{the $L^\infty$-analogues} of the testers.

\section{\textbf{Cocycle stability of polygonal complexes}} \label{sec:Cocycle_stability}
In this section, we define the last stability problem of interest (for this paper).
\subsection{Non-commutative cohomology}
Let $\cX$ be a polygonal complex. Recall our notation $\overrightarrow{\cX}(i)$ for the oriented  $i$-cells of $\cX$ and $\cX(i)$ for the unoriented versions, which were defined in  \eqref{eq:i-cells} and \eqref{eq:oriented_i-cells}. Moreover, for $i=1$ or $2$, recall that $\bar c$ is the reverse orientation of the cell $c\in \overrightarrow{\cX}(i)$. Let $\Gamma$ be a non trivial group and $d\colon \Gamma \times \Gamma \to \mathbb{R}_{\geq 0}$ a  bi-invariant metric on $\Gamma$.

  The \emph{$i$-cochains of $\cX$ with $\Gamma$  coefficients} are the anti-symmetric assignments of elements of $\Gamma$ to oriented $i$-cells. Namely,
    \[
        C^i(\cX,\Gamma)=\{\alpha\colon \overrightarrow{\cX}(i)\to \Gamma\mid \forall c\in \overrightarrow{\cX}(i)\colon \alpha(\bar c)=\alpha(c)^{-1}\}.
    \]
    Since we did not define orientations on vertices, there are no anti-symmetricity conditions on $0$-cochains. Also, for 
every $2$-cochain $\alpha\colon \raP(\cX)\to \Gamma$ and an orientation $\pi$ of a polygon $[\pi]$, we assume that $\alpha(\pi)$ is conjugate to $\alpha(\pi')$, where $\pi'$ is a shift of $\pi$.

The \emph{coboundary maps} are defined as follows. For a $0$-cochain $\alpha\colon V(\cX)\to \Gamma$, its coboundary $\delta\alpha$ is the $1$-cochain 
\[
\forall x\xrightarrow{e} y\in \raE(\cX)\ \colon\ \ \delta\alpha(e)=\alpha(x)^{-1}\alpha(y).
\]
For a $1$-cochain $\alpha\colon \raE(\cX)\to \Gamma$, its coboundary $\delta\alpha$ is the $2$-cochain
\[
\forall \pi=e_1...e_\ell\in \raP(\cX)\ \colon\ \ \delta\alpha(\pi)=\alpha(e_1)...\alpha(e_\ell).
\]
\begin{rem}
    Every map from $\raE(\cX)$ to a group $\Gamma$ can be extended to  paths in $G(\cX)$ in the natural way: For $\pi=x_0\xrightarrow{e_1}...\xrightarrow{e_\ell}x_\ell$, we let $\alpha(\pi)=\alpha(e_1)...\alpha(e_\ell)$. Note that $\delta \alpha(\pi)$ is exactly $\alpha(\pi)$ in the extended sense, but restricted to the polygons of $\cX$. 
\end{rem}
Next, we prove \emph{exactness}, namely, that for $\alpha\in C^0(\cX,\Gamma)$, $\delta^2\alpha$ is the constant identity function. For a polygon $\pi=x_0\xrightarrow{e_1}x_1\xrightarrow{e_2}...\xrightarrow{e_\ell}x_\ell=x_0$, we have
\[
\delta^2\alpha(\pi)=\delta\alpha(e_1)...\delta\alpha(e_\ell)=\alpha(x_0)^{-1}\alpha(x_1)\alpha(x_1)^{-1}\alpha(x_2)...\alpha(x_{\ell-1})^{-1}\alpha(x_\ell)=\alpha(x_0)^{-1}\alpha(x_\ell)=\Id.
\]
For any two $i$-cochains with $\Gamma$ coefficients $\alpha$ and $\beta$,  the \emph{distance} between them is 
\begin{equation}\label{eq:distance_between_cochains}
    d(\alpha,\beta)=\Ex_{[x]\in \cX(i)}\Ex_{x\in [x]}[d
(\alpha(x),\beta(x))].
\end{equation}
The \emph{norm} of an $i$-cochain with $\Gamma$ coefficients $\alpha$ is its distance to the constant identity cochain, namely 
\begin{equation}\label{eq:norm_of_cochain}
    \lVert\alpha\rVert=\Ex_{[x]\in \cX(i)}\Ex_{x\in[x]}[d
(\alpha(x),\Id)].\footnote{It is common to associate weights to the $i$-cells of the complex, and then to sample them in \eqref{eq:distance_between_cochains} and \eqref{eq:norm_of_cochain} according to their weight instead of uniformly at random. This is often used in the contemporary study of high dimensional expanders (cf. \cite{evra2016bounded}), and is related to Remark \ref{rem:general_distributions_hom_test}. We present this in more detail in Section \ref{sec:alt_dist}.}
\end{equation}

An \emph{$i$-cocycle} is an $i$-cochain $\alpha$ for which $\delta\alpha$ is the constant identity function. We denote the collection of $i$-cocycles by $Z^i(\cX,\Gamma)$.
A $0$-cochain $\beta\colon V(\cX)\to \Gamma$ is said to be a \emph{$0$-coboundary} if it is constant. Namely, for every $x,y\in V(\cX)$ we have $\beta(x)=\beta(y)$. A $1$-cochain $\alpha\colon \raE(\cX)\to \Gamma$ is said to be a \emph{$1$-coboundary} if it is in the image of the coboundary operator $\delta\colon C^0(\cX,\Gamma)\to C^1(\cX,\Gamma)$. 
Namely, there exists a $0$-cochain $\beta\colon V(\cX)\to \Gamma$ such that for every $xy\in \raE(\cX)$, $\alpha(xy)=\delta\beta(xy)=\beta(x)^{-1}\beta(y)$. We denote by $B^i(\cX,\Gamma)$ the collection of $i$-coboundaries of $\cX$.  Note that, without assuming further assumptions on $\Gamma$, the only indices for which we  define $Z^i(\cX,\Gamma)$ and $B^i(\cX,\Gamma)$ are $i=0\ \textrm{or}\ 1$. 
 We say that the \emph{$i^{\rm th}$ cohomology of $\cX$ with $\Gamma$ coefficients vanishes} if
 every $i$-cocycle of $\cX$  is an $i$-coboundary. 

 \subsection{Cohomology with permutation coefficients}
Though all the cohomological definitions we listed were  defined for a general group $\Gamma$, in this paper we focus on the case where $\Gamma$ is a finite permutation group equipped with the normalized Hamming distance \eqref{eq:permutation_normalized_Hamming_distance}.\footnote{As mentioned in Section \ref{sec:Dinur-Meshulam_framework}, to get the Dinur--Meshulam framework you replace the Hamming metric with the discrete metric.} Moreover, since the normalized Hamming distance (with errors) can compare permutations of different sizes, we will study them in a collective manner. Let 
\[
C^i(\cX,\Sym)=\bigsqcup_{n=2}^{\infty} C^i(\cX,\Sym(n))\  ,\quad Z^i(\cX,\Sym)=\bigsqcup_{n=2}^{\infty} Z^i(\cX,\Sym(n))\ ,
\]
and 
\[
B^i(\cX,\Sym)=\bigsqcup_{n=2}^{\infty} B^i(\cX,\Sym(n))
\]
be the \emph{$i$-cochains with permutation coefficients}, the \emph{$i$-cocycles with permutation coefficients} and the \emph{$i$-coboundaries with permutation coefficients}, respectively. Thus, $d_h$ defines a metric on $C^i(\cX,\Sym)$ as in  \eqref{eq:distance_between_cochains}, and thus also a norm on it as in equation \eqref{eq:norm_of_cochain}.

As the title of the section suggests, we now devise a tester for deciding whether a given $i$-cochain with permutaition coeffcients is an $i$-cocycle.   
Note that, by definition, $\alpha$ is a cocycle if and only if $\Vert\delta\alpha\Vert=0$. How robust is this property? Namely, if $\delta\alpha$ has a small norm, does it imply that $\alpha$ is close to a cocycle? The following tester  assumes exactly that:

\begin{algorithm}[H]
\caption{\quad\texttt{$i$-cocycle tester, for $i=0$ or $1$}}\label{alg:Cocycle_test}
\textbf{Input:} $\alpha\colon \overrightarrow{\cX}(i)\to \Sym(n)$ an $i$-cochain.\\
\textbf{Output:} Accept or Reject.

\begin{algorithmic}[1]

\STATE Pick an un-oriented $i$-cell $[\pi]\in \cX(i+1)$ uniformly at random. Then, pick a representative $\pi$ of $[\pi]$ uniformly at random. 
\STATE Pick an index $i\in \{1,...,n\}$ uniformly at random. 
\STATE{If $\delta\alpha(\pi).i=i$, then
return Accept.}

\STATE {Otherwise, return Reject.}

\end{algorithmic}
\end{algorithm}

Algorithm \ref{alg:Cocycle_test} can be framed in the setup of Section \ref{sec:Prop_testing} in a similar way to Algorithm \ref{alg:hom_test}. Namely, the input function is of the form $\alpha\colon \cX(i)\times [n]\to [n]$ with the restriction that for every $i$-cell $e$, $\alpha(e,\cdot)\colon [n]\to[n]$ is a permutation. Hence:
\begin{itemize}
    \item $\frF$ is the $i$-cochains $C^i(\cX,\Sym)$;
    \item $\frP$ is the $i$-cocycles $Z^i(\cX,\Sym)$;
    \item The normalized Hamming distance between two $i$-cochains $\alpha,\beta\in C^i(\cX,\Sym)$ is
    \[
        d_h(\alpha,\beta)=\Ex_{[e]\in \cX(i)}[d_h(\alpha(e),\beta(e))].
    \]
    Hence, the \emph{cocycle global defect} of $\alpha$ is
    \[
    \begin{split}
        \Defect_{cocyc}(\alpha)&=d_h(\alpha,Z^i(\cX,\Sym))\\
        &=\inf\{d_h(\alpha,\varphi)\mid N\in \mathbb{N},\ \varphi\colon \cX(i)\to \Sym(N),\ \Vert\delta\varphi\Vert=0\}.
    \end{split}
\]
\item The rejection probability of $\alpha$ in the  cocycle tester is, by design, $\Vert\delta\alpha\Vert$. Hence, the \emph{cocycle local defect} of $\alpha$ is 
\[
\defect_{cocyc}(\alpha)=||\delta\alpha||.
\]
    
\end{itemize}
\begin{rem}
    Our notations are suggestive of the $i=1$ case, since this case is our main interest. 
\end{rem}
\begin{defn}
Let $\rho$ be a rate function, as in Definition \ref{defn:Rate_function}.
    A polygonal complex is said to be  \emph{$\rho$-cocycle stable in the $i^{\rm th}$ dimension} (where $i=0$ or $1$) if for every $i$-cochain $\alpha$ we have
    \[
        d_h(\alpha,Z^i(\cX,\Sym))\leq \rho(\Vert\delta\alpha\Vert).
    \]
\end{defn}  

The following subsections are motivational. First, in Section \ref{sec:cocyc_stab_0_dim} we study cocycle stability in the $0^{\rm th}$ dimension, and prove it is related to the spectral gap of the underlying graph. This hints at a Cheeger-type inequality. Following the hint, in Section \ref{sec:Cheeger_cst}  we define an appropriate notion of a Cheeger constant, and compare it with the classical Cheeger constant. Finally, in Section \ref{sec:coboundary_cosystolic} we discuss coboundary  expansion  to motivate generalizations of it to our setup. This generalization will be further studied in Part II of this paper \cite{CL_part2}, and in the companion paper \cite{CL_stability_Random_complexes}.

\subsection{Cocycle stability in the $0^{\rm th}$ dimension}\label{sec:cocyc_stab_0_dim}
Cocycle stability in the $0^{\rm th}$ dimension  depends only on the underlying graph of the complex. For every finite graph $\cX$ there is an associated adjacency matrix $M_\cX\colon V(\cX)\times V(\cX)\to \mathbb{C} $ that satisfies
\begin{equation}\label{eq:adjacency_matrix}
    M_\cX(x,y)=|\{e\in \raE(\cX)\mid \iota(e)=x,\tau(e)=y\}|.
\end{equation}
Because of the reverse edge function, this matrix is symmetric. Assuming the graph is $k$-regular, the eigenvalues of this matrix are 
\[
k=\lambda_1\geq \lambda_2\geq ...\ge \lambda_{|V(\cX)|}\geq -k.
\]
The normalized spectral gap of a $k$-regular graph $\cX$ is
\[
\gamma(\cX)=\frac{k-\lambda_2}{k}.
\]
\begin{lem}[$L^2$-Poincare-type inequality]\label{lem:Poincare_ineq}
    Let $\cX$ be a finite regular graph. Let $\mathbb{H}$ be a Hilbert space. Let $f\colon V(\cX)\to \mathbb{H}$ be any function. Then 
    \[
        \Ex_{xy\in \raE(\cX)}[\Vert f(x)-f(y)\Vert^2]\geq \gamma(\cX) \cdot\Ex_{x,y\in V(\cX)}[\Vert f(x)-f(y)\Vert^2].
    \]
\end{lem}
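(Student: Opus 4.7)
The plan is to reduce the statement to the scalar case and then recognize it as a standard variational/spectral computation comparing the Dirichlet energy of $f$ on $\cX$ with its variance. First, I would reduce to $\mathbb{H}=\mathbb{R}$: writing $f=\sum_j f_j e_j$ in an orthonormal basis of $\mathbb{H}$ (or, more carefully, using the identity $\lVert v\rVert^2 = \sum_j |\langle v,e_j\rangle|^2$), both sides split as a sum (or integral, if $\mathbb{H}$ is infinite dimensional) over the coordinates of the corresponding scalar quantities for $f_j\colon V(\cX)\to\mathbb{R}$. So it suffices to prove the inequality when $f$ is real-valued.

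Next, let $L=\frac{1}{k}M_\cX$ be the normalized adjacency operator, viewed as a self-adjoint operator on $\mathbb{R}^{V(\cX)}$ equipped with the uniform inner product $\langle g,h\rangle = \frac{1}{|V(\cX)|}\sum_x g(x)h(x)$. The eigenvalues of $L$ are $1=\lambda_1/k\geq \lambda_2/k\geq\cdots\geq -1$, with the constant function $\mathbf{1}$ spanning the top eigenspace. A direct computation using $k$-regularity (namely $|\raE(\cX)|=k|V(\cX)|$ and $\sum_{xy\in \raE(\cX)} g(x)h(y)=\langle g, Mh\rangle\cdot|V(\cX)|$) gives
\[
\Ex_{xy\in \raE(\cX)}\!\bigl[(f(x)-f(y))^2\bigr]
= 2\bigl\langle f,\,(I-L)f\bigr\rangle,
\]
while on the right
\[
\Ex_{x,y\in V(\cX)}\!\bigl[(f(x)-f(y))^2\bigr]
= 2\bigl(\langle f,f\rangle - \bar f^{\,2}\bigr)
= 2\,\lVert f-\bar f\mathbf{1}\rVert^2,
\]
where $\bar f=\Ex_x[f(x)]$.

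Now decompose $f=\bar f\mathbf{1}+f^\perp$ with $f^\perp$ orthogonal to $\mathbf{1}$. Since $(I-L)\mathbf{1}=0$ and $(I-L)$ preserves $\mathbf{1}^\perp$ with spectrum contained in $[1-\lambda_2/k,\,2]=[\gamma(\cX),2]$, we get $\langle f,(I-L)f\rangle=\langle f^\perp,(I-L)f^\perp\rangle\geq \gamma(\cX)\,\lVert f^\perp\rVert^2$. Combining the two displays above yields the desired inequality, and summing over coordinates reinstates the Hilbert space case.

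I do not anticipate a real obstacle: this is the classical $L^2$-Poincar\'e inequality for regular graphs, and the only points to double-check are (i) that the Bass--Serre edge-sampling convention (each geometric edge counted twice, once per orientation, and multi-edges respected) matches the quadratic form $\langle f,(kI-M_\cX)f\rangle$ correctly, which it does precisely because $M_\cX$ is defined by \eqref{eq:adjacency_matrix} and is symmetric; and (ii) that the Hilbert space reduction is valid, which follows from the monotone/dominated convergence in the expansion $\lVert v\rVert^2=\sum_j|\langle v,e_j\rangle|^2$ applied inside both expectations (which are finite sums, so no analytic subtlety arises).
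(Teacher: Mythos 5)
Your proof is correct, and it is the standard spectral argument one would find in the references the paper cites. Note that the paper does not give its own proof of Lemma \ref{lem:Poincare_ineq}: it refers the reader to Theorem 13.9 of \cite{Hoory_Linial_Wigderson} (proof sketch) and Section 1.3 of \cite{de_la_Salle_spectral_gap} (complete proof), so there is no internal proof to compare against. What you wrote is essentially the content of those sources: reduce to the scalar case by expanding in an orthonormal basis of $\mathbb{H}$, identify the edge expectation with the Dirichlet form $2\langle f,(I-L)f\rangle$ and the vertex-pair expectation with twice the variance $2\lVert f-\bar f\mathbf{1}\rVert^2$, and then invoke the spectral gap of $I-L$ on $\mathbf{1}^\perp$. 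All of the bookkeeping is right given the Bass--Serre conventions of the paper: $|\raE(\cX)|=k\,|V(\cX)|$, the quadratic form over oriented edges equals $\frac{2}{n}\bigl(k\langle f,f\rangle_{\ell^2}-f^{\top}M_\cX f\bigr)$ because $M_\cX$ is symmetric, and the restriction of $I-L$ to $\mathbf{1}^{\perp}$ has spectrum contained in $[\gamma(\cX),2]$. The only potential subtlety worth a one-line remark in a full write-up is the case of a self-reversing loop $e=\bar e$ in the Bass--Serre graph, which would skew the factor-of-two bookkeeping; this does not affect the inequality (such a loop contributes $0$ to the Dirichlet form and does not change the variance), and most treatments either exclude $e=\bar e$ or treat it the same way, so it is a non-issue.
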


\begin{proof}
    See Theorem 13.9 in \cite{Hoory_Linial_Wigderson} for a proof sketch, and Section 1.3 in \cite{de_la_Salle_spectral_gap} for a complete proof.
\end{proof}

\begin{prop}\label{prop:Cheeger_lower_bound}
Every finite connected regular graph $\cX$ is $\rho$-cocycle stable in the $0^{\rm th}$ dimension, with $$\rho(\eps)=\frac{\eps}{\gamma(\cX)}.$$
\end{prop}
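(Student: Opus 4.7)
The plan is to apply the $L^2$-Poincar\'e-type inequality (Lemma \ref{lem:Poincare_ineq}) to a Hilbert-space embedding of $\Sym(n)$ that isometrically realises the normalized Hamming distance, and then extract a nearby constant ($0$-cocycle) via an averaging argument.

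First, I would embed $\Sym(n)$ into the Hilbert space $\mathbb{C}^{[n]\times [n]}$ by sending $\sigma$ to the rescaled permutation matrix $\frac{1}{\sqrt{2n}} P_\sigma$, where $P_\sigma(i,j) = \mathbb{1}[\sigma(i)=j]$. For each $i\in [n]$ with $\sigma(i)\neq\tau(i)$, the matrix $P_\sigma - P_\tau$ contributes exactly two entries of absolute value $1$ in row $i$ (and zero entries when $\sigma(i)=\tau(i)$). Hence
\[
\bigl\lVert \tfrac{1}{\sqrt{2n}}(P_\sigma - P_\tau)\bigr\rVert_2^{2} \;=\; \tfrac{1}{2n}\cdot 2\cdot \lvert\{i : \sigma(i)\neq \tau(i)\}\rvert \;=\; d_h(\sigma,\tau),
\]
so the embedding turns squared Hilbert norm into Hamming distance.

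Next, given a $0$-cochain $\alpha \colon V(\cX)\to \Sym(n)$, I define $f\colon V(\cX)\to \mathbb{C}^{[n]\times [n]}$ by $f(x) = \frac{1}{\sqrt{2n}} P_{\alpha(x)}$ and apply Lemma \ref{lem:Poincare_ineq} to $f$. This yields
\[
\Ex_{xy\in \raE(\cX)}\bigl[d_h(\alpha(x),\alpha(y))\bigr] \;\geq\; \gamma(\cX)\cdot \Ex_{x,y\in V(\cX)}\bigl[d_h(\alpha(x),\alpha(y))\bigr].
\]
Since $d_h$ is bi-invariant on $\Sym(n)$, the left-hand side equals $\Ex_{xy}[d_h(\alpha(x)^{-1}\alpha(y),\Id)] = \lVert\delta\alpha\rVert$, so $\Ex_{x,y\in V}[d_h(\alpha(x),\alpha(y))] \leq \lVert\delta\alpha\rVert/\gamma(\cX)$.

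Finally, I would choose $y_0\in V(\cX)$ minimizing $y\mapsto \Ex_x[d_h(\alpha(x),\alpha(y))]$; by averaging, this minimum is at most $\Ex_{x,y}[d_h(\alpha(x),\alpha(y))]$. Because $\cX$ is connected, the constant $0$-cochain $\varphi \equiv \alpha(y_0)$ lies in $Z^0(\cX,\Sym)$, and
\[
d_h(\alpha,\varphi) \;=\; \Ex_x\bigl[d_h(\alpha(x),\alpha(y_0))\bigr] \;\leq\; \Ex_{x,y}\bigl[d_h(\alpha(x),\alpha(y))\bigr] \;\leq\; \frac{\lVert\delta\alpha\rVert}{\gamma(\cX)},
\]
which is exactly the claimed rate. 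The only non-formulaic step is spotting the Hilbert space embedding of permutations in which squared distance recovers $d_h$; after that, the Poincar\'e inequality and an average-argument finish the proof mechanically, so I do not anticipate a serious obstacle.
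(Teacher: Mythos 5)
Your proof is correct and takes essentially the same route as the paper: embed $\Sym(n)$ as (scaled) permutation matrices in a Hilbert space so that squared distance recovers $d_h$, invoke the $L^2$-Poincar\'e inequality (Lemma \ref{lem:Poincare_ineq}), and extract a nearby constant cochain by choosing the base vertex minimizing the average distance. The only cosmetic difference is your normalization $\sigma\mapsto\frac{1}{\sqrt{2n}}P_\sigma$ versus the paper's unnormalized embedding paired with the normalized Hilbert--Schmidt norm and a factor of $\frac12$; these are the same identity $d_h(\sigma,\tau)=\frac12\lVert\sigma-\tau\rVert_{HS}^2$ in disguise.
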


\begin{proof}
    First of all, since $\cX$ is connected,  all $0$-cocycles are $0$-coboundaries, namely  constant functions. This is because $\Id=\delta\alpha(xy)=\alpha(x)^{-1}\alpha(y)$ implies $\alpha(x)=\alpha(y)$ along every edge.

    Now, let $\alpha\colon V(\cX)\to \Sym(n)$ be a $0$-cochain. The group $\Sym(n)$ embeds naturally into $U(n)$, the $n\times n$ unitary matrices, as permutation matrices. Moreover, for two permutations $\sigma,\tau\in \Sym(n)$, we have
    \[
        d_h(\sigma,\tau)=\frac{1}{2}\Vert \sigma -\tau \Vert_{HS}^2,
    \]
    where on the right hand side we think of $\sigma$ and $\tau$ as the associated permutation matrices, and 
    \begin{equation}
        \forall A\in M_{n}(\mathbb{C})\ \colon\ \ \Vert A\Vert_{HS}^2=\frac{\tr (A^*A)}{n}=\frac{\sum_{i,j=1}^n |A_{ij}|^2}{n}
    \end{equation} 
    is the normalized Hilbert--Schmidt norm, which is induced by an inner product on $M_n(\mathbb{C})$ that makes it a Hilbert space.
    Hence, by Lemma \ref{lem:Poincare_ineq},
    \[
       \begin{split}        \Vert\delta\alpha\Vert&=\Ex_{xy\in  \raE(\cX)}[d_h(\delta\alpha(xy),\Id)]\\
       &=\Ex_{xy\in  \raE(\cX)}[d_h(\alpha(x)^{-1}\alpha(y),\Id)]\\
        &=\Ex_{xy\in  \raE(\cX)}[d_h(\alpha(y),\alpha(x)]\\
         &=\frac{1}{2}\Ex_{xy\in  \raE(\cX)}[\Vert \alpha(y)-\alpha(x)\Vert_{HS}^2]\\
         &\geq\frac{\gamma(\cX)}{2}\Ex_{x,y\in  V(\cX)}[\Vert \alpha(y)-\alpha(x)\Vert_{HS}^2]\\
         &=\gamma(\cX)\Ex_{x,y\in V(\cX)}[d_h(\alpha(y),\alpha(x))]\\
         &\ge \gamma(\cX) \min_{x\in V(\cX)}\underbrace{\Ex_{y\in V(\cX)}[d_h(\alpha(y),\alpha(x))]}_{d_h(\alpha,\alpha(x))}\\
         &\geq \gamma(\cX) d_h(\alpha,Z^0(\cX,\Sym)),
       \end{split} 
    \]
    proving the claim.
\end{proof}
\subsection{Cheeger constants and (high dimensional) expansion}\label{sec:Cheeger_cst}
As was probably evident to an observant reader, Proposition \ref{prop:Cheeger_lower_bound} is a type of \emph{Cheeger inequality}. To make this statement precise, we need a suitable notion of a Cheeger constant.

\begin{defn}\label{defn:gen_cheeger}
    The  $i^{\rm th}$ \emph{cocycle Cheeger constant} of a complex $\cX$ with $\Gamma$ coefficients, where $i=0$ or $1$, is 
    \begin{equation}\label{eq:Cheeger_constant}
    h_i(\cX,\Gamma)=\inf\left\{\frac{\Vert\delta\alpha\Vert}{d(\alpha,Z^i(\cX,\Gamma))}\ \middle\vert\ {\alpha \in C^i(\cX,\Gamma)},\ \Vert\delta\alpha\Vert\neq 0\right\}.
    \end{equation}
    Similarly, the  $i^{\rm th}$ \emph{coboundary Cheeger constant} of a complex $\cX$ with $\Gamma$ coefficients is 
    \begin{equation}\label{eq:Cheeger_constant}
    h^B_i(\cX,\Gamma)=\inf\left\{\frac{\Vert\delta\alpha\Vert}{d(\alpha,B^i(\cX,\Gamma))}\ \middle\vert\ {\alpha \in C^i(\cX,\Gamma)} \setminus B^i(\cX,\Gamma)\right\}.
    \end{equation}
    When $h_i(\cX,\Sym)>0$ we call $\cX$ an \emph{$i$-cocycle expander with $\Gamma$ coefficients}, and when $h^B_i(\cX,\Sym)>0$ we call it an \emph{$i$-coboundary expander with $\Gamma$ coefficients}.
    
\end{defn}

When taking permutation coefficients, these notions genralize the classical Cheeger constant for graphs when $i=0$, as explained in Remark \ref{rem:classical_cheeger}. Moreover, when $i=1$ and $\Gamma=\FF_2$, these notions are closely related to Gromov's topological overlapping property \cites{gromov2010singularities,DKW_cosystolic_exp_implies_overlap}. As we aim to develop  further study of cocycle stability in permutations, we suggest in  \cite{CL_part2} a generalization of the search for bounded coboundary expanders.

\begin{rem}\label{rem:classical_cheeger}
    Let $\cX$ be a finite connected  $k$-regular graph. The \emph{classical Cheeger constant} of  $\cX$ is 
    \[
        h(\cX)=\min\left\{ \frac{|E(A,\bar A)|}{\min(|A|,|\bar A|)}\ \middle\vert \ \emptyset\neq A\subsetneq V(\cX)\right\},
    \]
    where $E(A,\bar A)$ is the set of edges with one endpoint in $A$ and the other in its complement $\bar A$. Note that if $\alpha\colon V(\cX)\to \{0,1\}$ is the characteristic function of $A$, and we identify $\{0,1\}$ with $\Sym(2)$ (as we did in Section \ref{sec:LTC}), then $\frac{|E(A,\bar A)|}{|E(\cX)|}=\Vert\delta\alpha\Vert$. Moreover, the $0$-cocycles with $\Sym(2)$ coefficients are exactly the characteristic functions of $V(\cX)$ and $\emptyset$, which we denote by ${\bf 1}$ and ${\bf 0}$ for the moment. Now, $d_h(\alpha,{\bf 0})=\Vert \alpha \Vert=\frac{|A|}{|V(\cX)|}$ and $d_h(\alpha,{\bf 1})=1-\Vert \alpha \Vert=\frac{|\bar A|}{|V(\cX)|}$. It can be verified that the closest constant function to $\alpha$ is either ${\bf 0}$ or ${\bf 1}$, even if we allow to range over all constant functions into $\Sym(n)$. Hence, $d_h(\alpha,Z^0(\cX,\Sym))=\frac{\min(|A|,|\bar A|)}{|V(\cX)|}$. All in all, 
    \begin{equation}\label{eq:rel_between_classical_and_0_Cheeger}
   \begin{split}
            h(\cX)&=\min\left\{ \frac{|E(A,\bar A)|}{\min(|A|,|\bar A|)}\ \middle\vert \ \emptyset\neq A\subsetneq V(\cX)\right\}\\
            &=\frac{|E(\cX)|}{|V(\cX)|}\cdot\min\left\{ \frac{\nicefrac{|E(A,\bar A)|}{|E(\cX)|}}{\nicefrac{\min(|A|,|\bar A|)}{|V(\cX)|}}\ \middle\vert \ \emptyset\neq A\subsetneq V(\cX)\right\}\\
            &=\frac{k}{2}\cdot\min\left\{ \frac{\Vert\delta \alpha\Vert}{d_h(\alpha,Z^0(\cX,\Sym))}\ \middle\vert \ \alpha\in C^0(\cX,\Sym(2)),\Vert\delta\alpha\Vert\neq 0\right\}\\
            &=\frac{k}{2}h_0(\cX,\FF_2)\\
            &\geq \frac{k}{2}h_0(\cX,\Sym).
        \end{split}     
    \end{equation}
    Namely, the classical Cheeger constant is (up to a factor of $\nicefrac{k}{2}$) the $0^{\rm th}$ cocycle Cheeger constant with $\FF_2\cong \Sym(2)$ coefficients, which upper bounds the cocycle Cheeger constant with permutation coefficients. 
  By the proof of Proposition \ref{prop:Cheeger_lower_bound}, 
  \begin{equation}\label{eq:Cheeger_ineq_lower}
      h_0(\cX,\Sym)\geq \gamma(\cX).
  \end{equation}
  Combining \eqref{eq:rel_between_classical_and_0_Cheeger} and \eqref{eq:Cheeger_ineq_lower}, we derive the lower bound in the Cheeger inequalities \cite{ALON_Milman_Cheeger_1,Alon_Cheeger_2,Dodziuk_Cheeger} (See also Proposition 
 4.2.5 in \cite{lubotzky1994discrete}). Moreover, since the Cheeger inequalities deduce an upper bound on $h(\cX)$ as a function of the spectral gap, together with \eqref{eq:rel_between_classical_and_0_Cheeger} we can deduce that also $h_0(\cX,\Sym)$ is upper bounded by the same bound.
\end{rem}   

\begin{cor}[Generalized  Cheeger inequalities in the $0^{\rm th}$ dimension]
    Let $\cX$ be a $k$-regular connected graph. Then,
\[
\gamma (\cX) \leq h_0(\cX,\Sym)\leq \sqrt{8\gamma(\cX)}.
\]

\end{cor}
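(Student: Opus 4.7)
The plan is to derive both inequalities by combining Proposition \ref{prop:Cheeger_lower_bound} with the chain of identities and inequalities already assembled in Remark \ref{rem:classical_cheeger}; no new mechanism beyond the classical Cheeger inequality is required.

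First, for the lower bound $\gamma(\cX) \leq h_0(\cX,\Sym)$, I would simply read off Proposition \ref{prop:Cheeger_lower_bound}: for every $0$-cochain $\alpha \in C^0(\cX,\Sym)$ with $\Vert\delta\alpha\Vert \neq 0$, the proposition gives
\[
\Vert \delta\alpha \Vert \;\geq\; \gamma(\cX) \cdot d_h\bigl(\alpha, Z^0(\cX,\Sym)\bigr),
\]
so rearranging and taking the infimum over such $\alpha$ yields $h_0(\cX,\Sym) \geq \gamma(\cX)$ directly from the definition \eqref{eq:Cheeger_constant}.

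For the upper bound, the plan is to cite two external facts and chain them with \eqref{eq:rel_between_classical_and_0_Cheeger}. Recall from that remark that
\[
h_0(\cX,\Sym) \;\leq\; h_0(\cX,\FF_2) \;=\; \frac{2}{k}\, h(\cX),
\]
where $h(\cX)$ is the classical Cheeger constant and $\cX$ is $k$-regular. Next, I would invoke the upper bound half of the classical Cheeger inequality (cf.\ \cite{ALON_Milman_Cheeger_1,Alon_Cheeger_2,Dodziuk_Cheeger}), which states
\[
h(\cX) \;\leq\; \sqrt{2k(k-\lambda_2)} \;=\; k\sqrt{2\gamma(\cX)}.
\]
Substituting this into the previous display gives
\[
h_0(\cX,\Sym) \;\leq\; \frac{2}{k}\cdot k\sqrt{2\gamma(\cX)} \;=\; 2\sqrt{2\gamma(\cX)} \;=\; \sqrt{8\gamma(\cX)},
\]
which is the desired bound.

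There is really no hard step here; the work was already carried out in proving Proposition \ref{prop:Cheeger_lower_bound} (for the easy direction) and in setting up \eqref{eq:rel_between_classical_and_0_Cheeger} (which reduces the permutation-coefficient Cheeger constant to the classical one by restricting to $\FF_2 \cong \Sym(2)$ coefficients). The only subtle point to double-check is the normalization in \eqref{eq:rel_between_classical_and_0_Cheeger} --- namely the factor $k/2$ relating $h(\cX)$ to $h_0(\cX,\FF_2)$, which comes from comparing $|E(\cX)|/|V(\cX)| = k/2$ with the normalized measures on edges and vertices used to define $\Vert\delta\alpha\Vert$ and $d_h$. Once that bookkeeping is confirmed, the corollary follows immediately.
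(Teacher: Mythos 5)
Your proof is correct and takes exactly the same route the paper intends: the lower bound is Proposition~\ref{prop:Cheeger_lower_bound} restated via Definition~\ref{defn:gen_cheeger}, and the upper bound chains $h_0(\cX,\Sym)\leq \frac{2}{k}h(\cX)$ from \eqref{eq:rel_between_classical_and_0_Cheeger} with the classical Cheeger upper bound $h(\cX)\leq\sqrt{2k(k-\lambda_2)}=k\sqrt{2\gamma(\cX)}$ to get $\sqrt{8\gamma(\cX)}$. Nothing to change; the bookkeeping on the $k/2$ normalization is exactly the point you flagged, and it checks out.
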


\subsection{Coboundary expansion} \label{sec:coboundary_cosystolic}
In the beginning of the proof of Proposition \ref{prop:Cheeger_lower_bound}, we  deduced the following:

 \begin{fact}\label{fact:0_vanishing=connected}
     The $0^{\rm th}$ cohomology of $\cX$ with 
 $\Gamma$ coefficients vanishes if and only if $G(\cX)$ is connected.
 \end{fact}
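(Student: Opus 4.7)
The plan is to unpack the definitions of $0$-cocycles and $0$-coboundaries and show that in the $0$-th dimension these coincide with, respectively, functions constant on each connected component and globally constant functions; connectivity of $G(\cX)$ is then exactly what forces these two notions to agree.

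First I would observe that a $0$-cochain $\alpha\colon V(\cX)\to \Gamma$ is a $0$-cocycle if and only if $\delta\alpha(e)=\alpha(x)^{-1}\alpha(y)=\Id$ for every edge $e\colon x\to y$, which is equivalent to $\alpha(x)=\alpha(y)$ along every edge. By iterating along a path, this is equivalent to $\alpha$ being constant on each connected component of $G(\cX)$. On the other hand, by the definition given just before Fact \ref{fact:0_vanishing=connected}, a $0$-coboundary is by convention a globally constant function $V(\cX)\to \Gamma$.

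For the forward direction, I would prove the contrapositive: assume $G(\cX)$ is disconnected, and write $V(\cX)=V_1\sqcup V_2$ with no edges between $V_1$ and $V_2$, with $V_1,V_2\neq\emptyset$. Using the standing assumption in the paper that $\Gamma$ is non-trivial, pick $g\neq h$ in $\Gamma$ and define $\alpha(x)=g$ for $x\in V_1$ and $\alpha(x)=h$ for $x\in V_2$. This $\alpha$ is constant along every edge, so $\alpha\in Z^0(\cX,\Gamma)$, but it is not globally constant, so $\alpha\notin B^0(\cX,\Gamma)$. Hence $H^0(\cX,\Gamma)\neq 0$. For the reverse direction, assume $G(\cX)$ is connected and let $\alpha\in Z^0(\cX,\Gamma)$. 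By the observation above, $\alpha$ is constant on each connected component, and since there is only one component, $\alpha$ is globally constant, i.e., $\alpha\in B^0(\cX,\Gamma)$. Thus every $0$-cocycle is a $0$-coboundary.

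There is no real obstacle here; the statement is essentially a tautology once the definitions are spelled out. The only point worth flagging is the mild hypothesis that $\Gamma$ is non-trivial, which is needed for the ``disconnected $\Rightarrow$ non-vanishing $H^0$'' direction so that distinct values $g\neq h$ exist; this is already assumed at the beginning of Section \ref{sec:Cocycle_stability}.
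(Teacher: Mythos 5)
Your proof is correct and follows the same route as the paper, which derives this fact from the observation at the start of the proof of Proposition \ref{prop:Cheeger_lower_bound} that $\Id=\delta\alpha(xy)=\alpha(x)^{-1}\alpha(y)$ forces $\alpha$ to be constant along edges, hence on connected components. You have merely spelled out the (trivial) converse direction for disconnected graphs, which the paper leaves implicit.
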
 
 
Hence, when we restricted the discussion in Proposition \ref{prop:Cheeger_lower_bound} and Remark \ref{rem:classical_cheeger} to the connected graph case, we forced the vanishing of the $0^{\rm th}$ cohomology. In that case, as proved in Remark \ref{rem:classical_cheeger}, the cocycle Cheeger constant with $\FF_2$ coefficients is the same as the classical Cheeger constant. But, for disconnected graphs they differ, and the natural analogue is the coboundary Cheeger constant. The following is a straightforward observation:

\begin{fact}\label{fact:positive_coboundary_and_vanishing}
    Let $\cX$ be a polygonal complex. Then:
    \begin{itemize}
        \item If $h^B_i(\cX,\Gamma)>0$, then, in particular, the $i^{\rm th}$ cohomology of $\cX$ with $\Gamma$ coefficients vanishes.
        \item If the $i^{\rm th}$ cohomology of $\cX$ with $\Gamma$ coefficients vanishes, then $h^B_i(\cX,\Gamma)=h_i(\cX,\Gamma)$.
    \end{itemize}
\end{fact}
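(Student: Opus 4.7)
My plan is to treat the two bullets separately; each is essentially a short definition-chase once one has the correct observation in hand.

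For the first bullet I would argue by contraposition. Suppose the $i^{\rm th}$ cohomology of $\cX$ with $\Gamma$ coefficients does not vanish, so there exists $\alpha\in Z^i(\cX,\Gamma)\setminus B^i(\cX,\Gamma)$. Being a cocycle, $\alpha$ satisfies $\|\delta\alpha\|=0$, so the single ratio contributed by $\alpha$ to the infimum defining $h^B_i(\cX,\Gamma)$ is $0$, provided the denominator $d(\alpha,B^i(\cX,\Gamma))$ is strictly positive. Granting this positivity, we conclude $h^B_i(\cX,\Gamma)=0$, contradicting the hypothesis.

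For the second bullet, assume $Z^i(\cX,\Gamma)=B^i(\cX,\Gamma)$. I would verify that the two infima defining $h_i$ and $h^B_i$ range over the same cochains and assign the same value to each. The denominators coincide automatically, since $d(\alpha,Z^i(\cX,\Gamma))=d(\alpha,B^i(\cX,\Gamma))$. For the index sets, the inclusion $\{\alpha\in C^i(\cX,\Gamma):\|\delta\alpha\|\neq 0\}\subseteq C^i(\cX,\Gamma)\setminus B^i(\cX,\Gamma)$ holds because $\|\delta\alpha\|\neq 0$ forces $\alpha\notin Z^i(\cX,\Gamma)=B^i(\cX,\Gamma)$. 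The reverse inclusion uses that $\alpha\notin B^i(\cX,\Gamma)=Z^i(\cX,\Gamma)$ means $\alpha$ is not a cocycle, which, since the metric on $\Gamma$ is nondegenerate and the norm is a nonnegative average, is equivalent to $\|\delta\alpha\|\neq 0$. Hence the two infima agree term-wise, proving the equality.

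The only technicality to verify is the positivity of $d(\alpha,B^i(\cX,\Gamma))$ used in the first bullet, which I expect to be the mildest possible obstacle. For the permutation-coefficient case this is immediate: for $\alpha\in C^i(\cX,\Sym(n))\setminus B^i(\cX,\Sym(n))$, the minimum distance to $B^i(\cX,\Sym(n))$ is positive by discreteness of $\Sym(n)$ together with the finiteness of $\cX$, and for $N>n$ the normalized Hamming-with-errors distance of \eqref{eq:permutation_normalized_Hamming_distance} forces cell-wise disagreement of at least $1-n/N>0$. Taking the infimum over all $N\geq n$ preserves strict positivity. For a general metric coefficient group the analogous statement holds whenever $B^i(\cX,\Gamma)$ is closed in the product topology induced by the metric on $\Gamma$, which is the setting in which the objects of Definition \ref{defn:gen_cheeger} are defined.
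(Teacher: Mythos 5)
Your proof is correct and is exactly the short definition-chase the paper has in mind when it calls this a ``straightforward observation'' and omits a proof. Your explicit flagging of the one genuine technicality — that the contradiction in the first bullet needs $d(\alpha,B^i(\cX,\Gamma))>0$ for $\alpha\notin B^i(\cX,\Gamma)$ — together with the verification in the permutation-coefficient case (discreteness and finiteness of $\cX(i)$ for $N=n$, the $1-n/N\ge 1/(n+1)$ lower bound for $N>n$), is a careful treatment of the only point a reader could stumble on; for general $\Gamma$ the Fact indeed tacitly requires this non-degeneracy, as you note.
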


The first observation in Fact \ref{fact:positive_coboundary_and_vanishing} (for $\FF_2$ coefficients) was a key ingredient in Linial--Meshulam's work on cohomology vanishing of random simplicial complexes \cite{linial_meshulam2006homological}. In  a companion paper to this one \cite{CL_stability_Random_complexes}, we discuss it further, and suggest to study phase transition of the vanishing of the $1^{\rm st}$ cohomology with permutation coefficients. But, what does it mean for the $1^{\rm st}$ cohomology with permutation coefficients to vanish? The analysis in Section \ref{sec:equivalences} provides a characterization:
\begin{fact}\label{fact:vanishing_of_1_cohomology}
    Let $\cX$ be a connected polygonal complex. Then, the $1^{\rm st}$ cohomology of $\cX$ with permutation coefficients vanishes if and only if the fundamental group $\pi_1(\cX,*)$ has no proper finite index subgroups. Equivalently, $\widehat{\pi_1(\cX,*)}=1$, i.e., the profinite completion of the fundamental group is trivial.
\end{fact}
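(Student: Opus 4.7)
The plan is to identify $1$-cocycles with values in $\Sym(n)$ (modulo $1$-coboundaries) with homomorphisms $\pi_1(\cX,*)\to \Sym(n)$, and then to interpret the vanishing of the $1^{\rm st}$ cohomology as the nonexistence of nontrivial finite permutation representations of $\pi_1(\cX,*)$.

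I would start by fixing a spanning tree $T$ of $G(\cX)$ and, for each vertex $v$, the unique reduced $T$-path $\gamma_v$ from $*$ to $v$. Any $1$-cochain $\alpha$ extends multiplicatively to paths via $\alpha(e_1\cdots e_\ell)=\alpha(e_1)\cdots\alpha(e_\ell)$. If $\alpha$ is a cocycle, then $\alpha(\pi)=\Id$ for every oriented polygon; using the standard presentation of $\pi_1(\cX,*)$ obtained by collapsing $T$ (Definition \ref{defn:pres_of_fundamental_group}), every element of $\pi_1(\cX,*)$ is a product of conjugates of polygon boundaries, so $\alpha(\pi)=\Id$ for every null-homotopic loop. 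Consequently $\alpha$ descends to a well-defined homomorphism $\phi_\alpha\colon \pi_1(\cX,*)\to \Sym(n)$ given by $\phi_\alpha([\pi])=\alpha(\pi)$. Conversely, any homomorphism $\phi\colon \pi_1(\cX,*)\to \Sym(n)$ is realized by the cocycle $\alpha(e):=\phi([\gamma_u\cdot e\cdot \gamma_v^{-1}])$ for $e\colon u\to v$.

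Next I would show that $\alpha$ is a $1$-coboundary exactly when $\phi_\alpha$ is trivial. If $\alpha=\delta\beta$, then for any closed loop $\pi=x_0\xrightarrow{e_1}\cdots\xrightarrow{e_\ell}x_0$ the product $\alpha(\pi)$ telescopes to $\beta(x_0)^{-1}\beta(x_0)=\Id$, so $\phi_\alpha\equiv \Id$. Conversely, if $\phi_\alpha\equiv\Id$, set $\beta(v):=\alpha(\gamma_v)$; for every edge $e\colon u\to v$, the loop $\gamma_u\cdot e\cdot \gamma_v^{-1}$ is based at $*$ and thus has trivial $\alpha$-value, giving $\alpha(e)=\alpha(\gamma_u)^{-1}\alpha(\gamma_v)=\beta(u)^{-1}\beta(v)=\delta\beta(e)$. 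So the $1^{\rm st}$ cohomology of $\cX$ with permutation coefficients vanishes if and only if $\Hom(\pi_1(\cX,*),\Sym(n))=\{\Id\}$ for every $n\geq 2$.

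Finally I would close the loop using standard group theory: every nontrivial homomorphism $G\to \Sym(n)$ has a nontrivial finite image, whose kernel is a proper finite index normal subgroup of $G$; conversely, any proper finite index subgroup $H\leq G$ yields a nontrivial left-translation action $G\to \Sym(G/H)$, with kernel the normal core of $H$. Hence \emph{$\pi_1(\cX,*)$ has no proper finite index subgroup} is equivalent to \emph{$\pi_1(\cX,*)$ admits no nontrivial homomorphism into any finite symmetric group}, which is the same as $\widehat{\pi_1(\cX,*)}=1$. Combining this with the previous paragraph yields the claimed equivalence. The whole argument is essentially bookkeeping once one has the cocycle/homomorphism dictionary; the only step requiring care is to invoke the correct presentation of $\pi_1(\cX,*)$ so that the cocycle condition on polygons genuinely forces $\alpha$ to vanish on every null-homotopic loop. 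This is exactly the translation baked into the equivalences of Section \ref{sec:equivalences}, so this proof amounts to tracing through that dictionary at the level of the objects themselves rather than their defects.
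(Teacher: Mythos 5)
Your proof is correct, and it follows exactly the route the paper intends: the paper does not give an explicit argument for this Fact, instead remarking that ``the analysis in Section~\ref{sec:equivalences} provides a characterization,'' and your write-up simply traces through the cocycle/homomorphism dictionary (the spanning-tree normalization and the identification of coboundaries with trivial representations) that underlies Theorems~\ref{thm:main2} and~\ref{thm:main3}. The only cosmetic caveat is that ``cocycles modulo coboundaries'' corresponds to ``homomorphisms modulo conjugation'' rather than to homomorphisms on the nose, but since your argument only ever uses that a cocycle is a coboundary if and only if the associated homomorphism is trivial, this has no effect on the proof.
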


As oppose to the $0^{\rm th}$ dimensional case, where vanishing of the $0^{\rm th}$ cohomology, namely connectedness, is the most basic property you would expect from an object which presumes to \emph{expand}, this is not the case in  higher dimensions. Out of the various suggested high dimensional expansion properties (cf. \cite{lubotzky2018high}), coboundary expansion is considered a very strong notion.
The motivation to study coboundary expansion over $\FF_2$ arised from the notion of topological overlapping \cite{gromov2010singularities}, but it also found applications in property testing \cite{kaufman2014high}. As mentioned before, we further discuss these notions, as well as topological overlapping, in Part II \cite{CL_part2} and in \cite{CL_stability_Random_complexes}, where we will also illustrate further the importance of coboundary expansion with permutation coefficients.

\section{\textbf{Proving equivalences}}\label{sec:equivalences}
\subsection{Covering stability is 
equivalent to cocycle stability} \label{sec:equivalence_covering_cocycles}
In this (sub) section, we prove  Theorem \ref{thm:main1}. Namely, we show that $\rho$-cocycle stability and $\rho$-covering stability of a polygonal complex $\cX$ are the same. We actually define a one to one correspondence between  $1$-cochains of $\cX$ with $\Sym$ coefficients (up to some equivalence relation) and coverings of the $1$-skeleton of $\cX$ (up to isomorphism). Under this correspondence, $1$-cocycles are mapped to genuine coverings. Moreover, the global defects are in correspondence. Lastly, the testers translate to one another under this correspondence, which in turn preserves the local defect. This idea of encoding graph coverings using $1$-cochains is classical (cf. \cites{Hatcher_Alg_Top,Linial_Amit2002random,amit2001random}), and was specifically used in Dinur--Meshulam \cite{Dinur-Meshulam} in a similar way.

To make the claims from the previous paragraph precise, we need the following action of $0$-cochains of $\cX$ with $\Sym(n)$ coefficients on $1$-cochains of $\cX$ with $\Sym(n)$ coefficients:
\begin{equation}\label{eq:action_0-coch_on_1-coch}
    \forall \alpha \colon \raE(\cX)\to \Sym(n),\ \beta \colon V(\cX)\to \Sym(n)\ \colon\ \ \beta.\alpha(x\xrightarrow{e}y)=\beta(x)^{-1}\alpha(e)\beta(y).
\end{equation}
Note that for every polygon $\pi=x\xrightarrow{e_1}...\xrightarrow{e_\ell}x\in \raP(\cX)$, we have $\beta.\alpha(\pi)=\beta(x)^{-1}\alpha(\pi)\beta(x)$. In particular, $\Vert\delta(\beta.\alpha)\Vert=\Vert \delta \alpha\Vert$, and hence the action preserves $1$-cocycles.
\begin{claim}\label{claim:encoding_of_covers_by_1-cocyc}
    Let $\cX$ be a polygonal complex. There is a one to one correspondence between coverings of $G(\cX)$ and orbits of $1$-cochains of $\cX$ with permutation coefficients under the action described in \eqref{eq:action_0-coch_on_1-coch}.
\end{claim}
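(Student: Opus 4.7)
The approach is the classical encoding of graph coverings by permutation assignments to the oriented edges. Given $\alpha \in C^1(\cX,\Sym(n))$, I would construct a graph $\cG_\alpha$ with vertex set $V(\cX)\times [n]$ and, for every $e\colon x\to y$ in $\raE(\cX)$ and every $i\in[n]$, a directed edge $(x,i)\to(y,\alpha(e).i)$ whose reverse is the edge assigned to $\bar e$ at the position $\alpha(e).i$. The anti-symmetry $\alpha(\bar e)=\alpha(e)^{-1}$ is exactly what is needed to make this involution consistent. The obvious projection $f_\alpha\colon \cG_\alpha\to G(\cX)$ is then a degree-$n$ combinatorial covering by Fact \ref{fact:char_of_graph_coverings}, since at each vertex $(x,i)$ the star is carried bijectively onto the star of $x$.

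For the inverse direction, given a finite covering $f\colon \cG\to G(\cX)$ of degree $n$, I would pick an arbitrary bijection $\phi_x\colon f^{-1}(x)\to [n]$ for every $x\in V(\cX)$. Using the local bijectivity of $f$ at each vertex of $\cG$, every oriented edge $e\colon x\to y$ in $\cX$ and every preimage $\tilde x\in f^{-1}(x)$ lifts to a unique oriented edge $\tilde e\in \raE(\cG)$ with $\iota(\tilde e)=\tilde x$ and $f(\tilde e)=e$. Setting $\alpha_f(e).\phi_x(\tilde x):=\phi_y(\tau(\tilde e))$ defines a permutation $\alpha_f(e)\in\Sym(n)$, and the compatibility of $f$ with the edge-reverse involution forces $\alpha_f(\bar e)=\alpha_f(e)^{-1}$, so $\alpha_f\in C^1(\cX,\Sym(n))$.

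The key step is to identify the ambiguity in this second construction with the action \eqref{eq:action_0-coch_on_1-coch}. Replacing each identification $\phi_x$ by $\beta(x)\circ\phi_x$ for some $\beta\in C^0(\cX,\Sym(n))$ transforms $\alpha_f$ into the cochain $e\mapsto \beta(\iota(e))^{-1}\alpha_f(e)\beta(\tau(e))$, which is precisely $\beta.\alpha_f$. Therefore two cochains built from the same covering with different identifications lie in the same $C^0$-orbit, and the first construction $\alpha\mapsto(\cG_\alpha,f_\alpha)$ is manifestly constant on orbits, because $(x,i)\mapsto (x,\beta(x)^{-1}.i)$ provides a labeled isomorphism $\cG_\alpha\cong \cG_{\beta.\alpha}$ over $G(\cX)$.

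Finally, I would verify that the two constructions are mutually inverse up to labeled isomorphism of coverings: starting from $\alpha$ and taking the tautological identifications $\phi_x\colon f_\alpha^{-1}(x)=\{x\}\times[n]\to [n]$, one recovers $\alpha$ on the nose; conversely, starting from $f$ with chosen $\{\phi_x\}$, the map $(x,i)\mapsto \phi_x^{-1}(i)$ extends to a labeled isomorphism $\cG_{\alpha_f}\to\cG$ over $G(\cX)$. Summing across all $n\ge 2$ yields the claimed bijection. The only part requiring genuine care, rather than routine bookkeeping, is the consistent handling of orientations and the anti-symmetry condition; once one commits to viewing the involution $\overline{\ \ ^{\ ^{\ }}}$ on $\raE(\cG_\alpha)$ as coming from the identity $\alpha(\bar e)=\alpha(e)^{-1}$, the remaining verifications are direct.
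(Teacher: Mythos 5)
Your overall strategy — encode a covering by storing at each edge the bijection between fibers, and show the ambiguity in labeling fibers corresponds to the $C^0$-action — is exactly the paper's approach. However, there is an orientation mismatch that breaks the verification as written.

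You build $\cG_\alpha$ by declaring an edge $(x,i)\to(y,\alpha(e).i)$, i.e.\ $\alpha(e)$ carries the fiber over the \emph{initial} vertex to the fiber over the \emph{terminal} vertex. In the inverse construction you accordingly set $\alpha_f(e)=\phi_y\circ L_e\circ\phi_x^{-1}$, where $L_e\colon f^{-1}(x)\to f^{-1}(y)$ is the lift map. Replacing $\phi_v$ by $\beta(v)\circ\phi_v$ then yields
\[
\alpha_f'(e)\;=\;\bigl(\beta(y)\phi_y\bigr)\circ L_e\circ\bigl(\beta(x)\phi_x\bigr)^{-1}\;=\;\beta(\tau(e))\,\alpha_f(e)\,\beta(\iota(e))^{-1},
\]
\emph{not} $\beta(\iota(e))^{-1}\alpha_f(e)\beta(\tau(e))$ as you claim. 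These two formulas define genuinely different $C^0$-actions with different orbit partitions on $C^1(\cX,\Sym)$: for a closed triangle $e_1e_2e_3$ and $\alpha_0=((123),(12),(13))$ in $\Sym(3)$, the monodromy $\alpha_0(e_1)\alpha_0(e_2)\alpha_0(e_3)=\Id$, so $\alpha_0$ is in the $\beta(\iota)^{-1}\alpha\beta(\tau)$-orbit of the trivial cochain, but the reversed product $\alpha_0(e_3)\alpha_0(e_2)\alpha_0(e_1)=(132)$, so it is \emph{not} in the $\beta(\tau)\alpha\beta(\iota)^{-1}$-orbit of the trivial cochain. The same issue invalidates your claimed labeled isomorphism $\cG_\alpha\cong\cG_{\beta.\alpha}$ via $(x,i)\mapsto(x,\beta(x)^{-1}.i)$: with your edge convention the correct isomorphism to $\cG_{\alpha'}$ with $\alpha'(e)=\gamma(\tau(e))\alpha(e)\gamma(\iota(e))^{-1}$ is $(x,i)\mapsto(x,\gamma(x).i)$, and it does \emph{not} land on $\cG_{\beta.\alpha}$ with the paper's $\beta.\alpha$.

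The fix is easy but necessary: either adopt the paper's convention $\iota(e,i)=(x,\alpha(e).i)$, $\tau(e,i)=(y,i)$ — so that $\alpha(e)$ carries the fiber over the terminal point to the fiber over the initial point, the choice the paper explicitly flags in a footnote as being forced by its use of left actions and the form of \eqref{eq:action_0-coch_on_1-coch} — or else note that your construction establishes a bijection with orbits under the action $\alpha\mapsto\beta(\tau)\alpha\beta(\iota)^{-1}$, and compose with the pointwise-inverse involution $\alpha\mapsto\alpha^{-1}$, which intertwines the two actions (up to $\beta\mapsto\beta^{-1}$) and hence carries one orbit space bijectively to the other.
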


\begin{proof}
    We first associate with every $1$-cochain $\alpha\colon \raE(\cX)\to \Sym(n)$ a corresponding covering $f\colon \cG \to G(\cX)$ as follows:
    
 \[
 \begin{split}
      &V(\cG)=V(\cX)\times [n]\quad,\quad\raE(\cG)=\raE(\cX)\times [n];\\
      &\forall x\xrightarrow{e}y\in \raE(\cX),i\in [n]\ \colon \ \ \tau(e,i)=(y,i),\ \iota(e,i)=(x,\alpha(e).i),\ \overline{(e,i)}=(\bar e,\alpha(e).i);\\
      &\forall x\in V(\cX),e\in \raE(\cX),i\in[n]\ \colon \ \ f(x,i)=x,\quad f(e,i)=e.
 \end{split}
 \]
By Fact \ref{fact:char_of_graph_coverings}, $f$ is a covering.\footnote{Note that the permutation $\alpha(e)$ tells us how the fiber over the terminal point $y$ is mapped to the fiber over the origin point $x$ and not the other way around. This is because of our choice of left actions.}

On the other hand, if $f\colon \cG\to G(\cX)$ is a degree $n$ covering, then one can construct a $1$-cochain $\alpha\colon \raE(\cX)\to \Sym(n)$ as follows:
     For every $x\in V(\cX)$, $|f^{-1}(x)|=n$. Hence we can label the vertices of $f^{-1}(x)$ by $\{(x,i)\}_{i=1}^n$. Note that for each vertex there are $n!$ ways of choosing these labels. Now, for every $e\in \raE(\cX)$ and $e'\in f^{-1}(e)$ define $e'=(e,i)$ if $\tau(e')=(y,i)$. Here we did not have any choice in the labeling. Now, define $\alpha(e).i$ to be  the second coordinate of $\iota(e,i)$. Note that  a different choice of labeling for the fibers $f^{-1}(x)$ would give rise to another $1$-cochain in the same orbit of the action of the $0$-cochains. This proves the correspondence.
\end{proof}

The encoding in Claim \ref{claim:encoding_of_covers_by_1-cocyc} allows us to analyze the lifted path $\pi'$ used in Algorithm \ref{alg:Covering_test}. Recall that for every path $x\xrightarrow{e_1}...\xrightarrow{e_\ell}y$ in $\cX$ and every $x'\in f^{-1}(x)$, there exists a unique path $\pi'$ in $\cG$ that starts at $x'$ and satisfies $f(\pi')=\pi$. By choosing a $1$-cochain $\alpha$ that corresponds to the covering $f$, we have chosen a label $(x,i)$ for $x'$. To lift $x\xrightarrow{e_1}x_1$, we need to find the index $j$ such that $(x,i)\xrightarrow{(e_1,j)}(x_1,j)$. By our construction, $i=\alpha(e_1).j$, which implies $j=\alpha(e_1)^{-1}.i=\alpha(\bar e_1).i$. Therefore, by induction, the lifted path $\pi'$ is of the form
 \[
 \begin{split}
\pi'=&(x,i)\xrightarrow{(e_1,\alpha(\bar e_1).i)}(x_1,\alpha(\bar e_1).i)\xrightarrow{(e_2,\alpha(\bar e_2)\circ\alpha(\bar e_1).i)}(x_2,\alpha(\bar e_2)\circ\alpha(\bar e_1).i)\to\dots\\
&\dots\xrightarrow{(e_\ell,\alpha(\bar e_\ell)\circ...\circ\alpha(\bar e_1).i)}  (y,\alpha(\bar e_\ell)\circ...\circ\alpha(\bar e_1).i).
\end{split}
 \]
Note that $\alpha(\bar e_\ell)\circ...\circ\alpha(\bar e_1)=\delta\alpha(\pi)^{-1}$. Hence, the lifted path of a polygon  $\pi$ to $x'=(x,i)$ is closed if and only if $\delta\alpha(\pi)^{-1}.i=i$ (which is if and only if $\delta\alpha(\pi).i=i$). Therefore, the testers Algorithm \ref{alg:Cocycle_test} and Algorithm \ref{alg:Covering_test} are \textbf{the same} when encoding coverings as $1$-cochains. In particular, the covering $f$ is genuine if and only if $\delta\alpha(\pi)=\Id$ for every polygon $\pi\in \raP(\cX)$, which is the same as saying $\delta\alpha=\Id$. 
\begin{cor}\label{cor:local_defect_preserved}
    Let $\cX$ be a polygonal complex.  The \textbf{local} defects of  Algorithms \ref{alg:Covering_test} and \ref{alg:Cocycle_test} are preserved under the corresponednce of Claim \ref{claim:encoding_of_covers_by_1-cocyc}. In particular, orbits of $1$-cocycles of $\cX$ are in one to one correspondence with isomorphism classes of genuine coverings of $\cX$.
\end{cor}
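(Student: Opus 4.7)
The plan is to read the corollary as a clean packaging of the explicit computation that already appears in the preceding paragraphs, and to organize that computation into three bite-sized checks. The first check is that, under the correspondence of Claim \ref{claim:encoding_of_covers_by_1-cocyc}, the random bits consumed by Algorithm \ref{alg:Covering_test} and Algorithm \ref{alg:Cocycle_test} are the same: in both cases we sample an unoriented polygon $[\pi]\in P(\cX)$ uniformly, then a representative $\pi$ uniformly, and finally an element from a set of size $n$ --- in the covering tester this is $x'\in f^{-1}(x)$, and in the cocycle tester it is an index $i\in[n]$, and the labeling $x'=(x,i)$ from Claim \ref{claim:encoding_of_covers_by_1-cocyc} identifies the two uniform distributions.

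The second and main check is that, with these samples identified, the acceptance criteria coincide. I would establish this by a single induction on the length of a path $\pi=x_0\xrightarrow{e_1}x_1\xrightarrow{e_2}\cdots\xrightarrow{e_\ell}x_\ell$ in $G(\cX)$: assuming $\cG$ is built as in Claim \ref{claim:encoding_of_covers_by_1-cocyc}, the unique lift of $e_k$ starting at $(x_{k-1},j)$ ends at $(x_k,\alpha(\bar e_k).j)$, so by induction the lift of $\pi$ starting at $(x_0,i)$ terminates at $\bigl(x_\ell,\alpha(\bar e_\ell)\circ\cdots\circ\alpha(\bar e_1).i\bigr)=\bigl(x_\ell,\delta\alpha(\pi)^{-1}.i\bigr)$. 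When $\pi$ is a polygon we have $x_\ell=x_0$, so the lift is closed precisely when $\delta\alpha(\pi)^{-1}.i=i$, equivalently $\delta\alpha(\pi).i=i$, which is exactly the event for which Algorithm \ref{alg:Cocycle_test} accepts. The main bookkeeping hazard here --- the only thing that is not totally formal --- is getting the side of the action right: the fiber over the terminal vertex of $e$ is mapped to the fiber over its origin by $\alpha(e)$ (reflecting the left-action convention in \eqref{eq:action_0-coch_on_1-coch}), and this is why the composition comes out as $\delta\alpha(\bar\pi)=\delta\alpha(\pi)^{-1}$ rather than $\delta\alpha(\pi)$.

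The third check is the ``in particular'' clause. A covering $f\colon\cG\to G(\cX)$ is genuine iff every polygon in $\raP(\cX)$ lifts to a closed path from every point of its fiber (Fact \ref{fact:completeness_of_covering_tester}). By the formula above, this happens iff $\delta\alpha(\pi).i=i$ for all $\pi\in\raP(\cX)$ and all $i\in[n]$, i.e., $\delta\alpha(\pi)=\mathrm{Id}$ for every polygon, i.e., $\alpha\in Z^1(\cX,\mathrm{Sym})$. To upgrade ``genuine coverings'' to ``isomorphism classes of genuine coverings,'' note that the arbitrariness in Claim \ref{claim:encoding_of_covers_by_1-cocyc} is exactly the freedom to relabel each fiber $f^{-1}(x)$ by a permutation $\beta(x)\in\Sym(n)$; a direct computation shows this relabeling changes $\alpha$ to $\beta.\alpha$ as defined in \eqref{eq:action_0-coch_on_1-coch}, so isomorphism classes of $n$-sheeted coverings match orbits of $C^0(\cX,\Sym(n))$ acting on $C^1(\cX,\Sym(n))$, and restricting to genuine coverings gives the claimed bijection with orbits of $Z^1(\cX,\mathrm{Sym})$.

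In short, the proof is essentially a corollary of the explicit computation displayed in the paragraph right before the statement; the only new observation is that ``sample $x'\in f^{-1}(x)$'' and ``sample $i\in[n]$'' are the same random choice, so the testers agree event by event, not just in expected rejection probability. No part of the argument is difficult; the only thing worth being careful about is the compatibility of conventions (orientation of edges, left vs.\ right action, and the $\delta\alpha(\pi)$ vs.\ $\delta\alpha(\pi)^{-1}$ arising from the reverse-edge formula), and once those are fixed the equality of local defects is immediate.
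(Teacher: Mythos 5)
Your proposal is correct and follows essentially the same route as the paper: the paper's proof of the corollary is precisely the inductive path-lifting computation appearing in the paragraph between Claim \ref{claim:encoding_of_covers_by_1-cocyc} and the corollary, culminating in the same formula $\alpha(\bar e_\ell)\circ\cdots\circ\alpha(\bar e_1)=\delta\alpha(\pi)^{-1}$ and the same closedness criterion $\delta\alpha(\pi).i=i$. The only thing you add is making explicit that the sampling distributions of the two testers are identified point-by-point (not just in expectation) and that the \emph{in particular} clause falls out of the relabeling remark already made in the proof of Claim \ref{claim:encoding_of_covers_by_1-cocyc}; both of these are implicit in the paper, so this is a faithful unpacking rather than a different argument.
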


\begin{claim}\label{claim:global_def_preserved}
    Let $\cX$ be a polygonal complex.  The \textbf{global} defects of  Algorithms \ref{alg:Covering_test} and \ref{alg:Cocycle_test} are preserved under the corresponednce of Claim \ref{claim:encoding_of_covers_by_1-cocyc}. 
\end{claim}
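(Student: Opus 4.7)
The plan is to prove both inequalities $\Defect_{cocyc}(\alpha) \leq \Defect_{cover}(f_\alpha)$ and $\Defect_{cover}(f_\alpha) \leq \Defect_{cocyc}(\alpha)$ by explicit translation between $G(\cX)$-labeled common subgraphs and pointwise agreements of $1$-cochains. I will use the explicit description of $\cG_\alpha$ from the proof of Claim \ref{claim:encoding_of_covers_by_1-cocyc}, in which the edge $(e,i)$ runs from $(x, \alpha(e).i)$ to $(y, i)$ for $e \colon x \to y$. Since both $d_h$ and $d_{edit}$ normalize against the \emph{maximum} of the two relevant sizes, appending trivial cocycle components (which correspond to disjoint copies of $G(\cX)$ as trivial covers) weakly decreases both defects, so I may assume both $\alpha$ and any comparison cocycle have a common degree $N$; this does not alter either infimum.

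For the first inequality, given any $1$-cocycle $\varphi$ of degree $N$, consider the label-preserving vertex bijection $(x,i) \mapsto (x,i)$ between $V(\cG_\alpha)$ and $V(\cG_\varphi)$. An oriented edge $(e,i)$ belongs to both $\cG_\alpha$ and $\cG_\varphi$ precisely when the origin conditions match, i.e.\ when $\alpha(e).i = \varphi(e).i$. Hence the resulting $G(\cX)$-labeled common subgraph $\cA$ satisfies
\[
|E(\cA)| \;\geq\; \sum_{[e] \in E(\cX)} \bigl|\{i \in [N] : \alpha(e).i = \varphi(e).i\}\bigr|,
\]
which upon normalizing by $|E(\cG_\varphi)| = |E(\cX)| \cdot N$ yields $d_{edit}(\cG_\alpha, \cG_\varphi) \leq d_h(\alpha, \varphi)$. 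Taking the infimum over $\varphi$ gives $\Defect_{cover}(f_\alpha) \leq \Defect_{cocyc}(\alpha)$.

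For the reverse inequality, fix a genuine covering $g \colon \cG_g \to \cX$ with an associated cocycle $\varphi_0$ of degree $N$, and let $\cA$ realize $d_{edit}(\cG_\alpha, \cG_g)$. Without loss of generality, the embedding of $\cA$ into $\cG_\alpha$ is the inclusion; then the $G(\cX)$-labeled embedding into $\cG_g$ is given on vertices by label-preserving injections $\pi_x \colon A_x \hookrightarrow [N]$ with $A_x \subseteq [N]$, and the edge condition reads $\pi_x(\alpha(e).i) = \varphi_0(e).\pi_y(i)$ for every oriented edge $(e,i)$ of $\cA$. Extend each $\pi_x$ arbitrarily to a bijection $\tilde\pi_x \in \Sym(N)$ (possible since $|[N] \setminus A_x| = |[N] \setminus \pi_x(A_x)|$), and set $\varphi := \tilde\pi.\varphi_0$, which lies in the same $0$-cochain orbit as $\varphi_0$. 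A direct calculation from the edge condition shows $\varphi(e).i = \alpha(e).i$ for every oriented edge $(e,i) \in \cA$; summing produces at least $2|E(\cA)|$ agreements across all oriented edges, so $d_h(\alpha, \varphi) \leq d_{edit}(\cG_\alpha, \cG_g)$, and the infimum over $g$ completes the proof.

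The main obstacle is careful bookkeeping: verifying that the padding step genuinely leaves both infima invariant (trivial summands contribute matched fibers in any largest common subgraph while enlarging the normalizing denominator in lockstep), and that extending the partial injection $\pi_x$ to the bijection $\tilde\pi_x$ cannot destroy agreements already counted by $\cA$ (it cannot, since $\tilde\pi_x$ restricts to $\pi_x$ on $A_x$, which is the only property the verification uses). Both points are elementary but must be checked to obtain the exact equality rather than merely a qualitative correspondence.
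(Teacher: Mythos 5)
Your proof is correct and follows essentially the same strategy as the paper: in one direction you build a common $G(\cX)$-labeled subgraph from the pointwise agreements $\alpha(e).i=\varphi(e).i$, and in the other you take a largest common subgraph and relabel fibers so it sits inside the set of agreements. The only presentational difference is that you phrase the relabeling of the comparison cover as twisting $\varphi_0$ by a $0$-cochain $\tilde\pi$ (and add an optional padding step), whereas the paper directly relabels both covers' fibers compatibly with $\cA$ and invokes maximality of $\cA$; these are the same argument.
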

\begin{proof}
 Let $\alpha\colon \raE(\cX)\to \Sym(n)$ and $\beta\colon \raE(\cX)\to \Sym(N)$ be two $1$-cochains, where $N\geq n$. Let $f\colon \cG\to G(\cX)$ and $\varphi\colon \cY \to G(\cX)$ be the coverings associated with $\alpha$ and $\beta$ respectively, as defined in the proof of Claim \ref{claim:encoding_of_covers_by_1-cocyc}. Then, we can define a mutual $G(\cX)$-labeled subgraph $\cA$ of $\cG$ and $\cY$ as follows: Its vertices are $$V(\cA)=V(\cG)=V(\cX)\times [n].$$ Its edges are $$\raE(\cA)=\raE(\cG)\cap\raE(\cY)=\{(e,i)\mid \iota_\cG(e,i)=\iota_\cY(e,i)\}=\{(e,i)\mid \alpha(e).i=\beta(e).i \}.$$
Therefore, we found a $G(\cX)$-labeled subgraph of $\cG$ and $\cY$ with $$|\raE(\cA)|=|\{(e,i)\mid e\in \raE(\cX),\ i\in [N],\ \alpha(e).i=\beta(e).i\}|=|\raE(\cX)|\cdot N\cdot d_h(\alpha,\beta).$$
Hence, recalling the definition of the normalized edit distance \eqref{eq:norm_edit_distance},
\begin{equation}\label{proof:edit_smaller_than_hamming}
    d_{edit}(\cG,\cY)\leq 1-\frac{|\raE(\cA)|}{|\raE(\cY)|}=d_h(\alpha,\beta).
\end{equation}

 On the other hand, let $f\colon \cG\to G(\cX)$ be an $n$-covering and $\varphi \colon \cY\to G(\cX)$ be an $N$-covering, where $N\geq n$.  If $a\colon \cA\to G(\cX)$ is the largest $G(\cX)$-labeled graph that embeds in both, then we can think of $V(\cA)\subseteq V(\cG)\cap V(\cY)$. For every $x\in V(\cX)$, if $|a^{-1}(x)|=k_x\leq n$ then we can label its vertices as $\{(x,i)\}_{i=1}^{k_x}$. Further, label the vertices of $f^{-1}(x)\setminus a^{-1}(x)$ by $\{(x,i)\}_{i=k_x+1}^{n}$ and the vertices of $\varphi^{-1}(x)\setminus a^{-1}(x)$ by $\{(x,i)\}_{i=k_x+1}^{N}$. Note that we essentially chose an arbitrary embedding of $V(\cG)\setminus V(\cA)$ into $V(\cY)\setminus V(\cA)$. By this choice of labeling, we get $\raE(\cA)\subseteq \raE(\cG)\cap\raE(\cY)$. But $\cA$ was the largest $G(\cX)$-labeled embedding into $\cG$ and $\cY$, which implies  $\raE(\cA)= \raE(\cG)\cap\raE(\cY)$. Let $\alpha$ and $\beta$ be the  $1$-cochain encodings of $f$ and $\varphi$ respectively, defined by our choice of labelings (as done in the proof of Claim \ref{claim:encoding_of_covers_by_1-cocyc}). By definition, 
 \[
 \begin{split}
     \raE(\cG)\cap\raE(\cY)&=\{(e,i)\mid e\in \cX,\ i\in[N],\ \iota_\cG(e,i)=\iota_\cY(e,i)\}\\
     &=\{(e,i)\mid e\in \cX,\ i\in[N],\ \alpha(e).i=\beta(e).i\}.
 \end{split}
 \]
 Therefore,
 \begin{equation}
     d_h(\alpha,\beta)=\Pro_{e\in \raE(\cX),i\in[N]}[\alpha(e).i=\beta(e).i]=\frac{|\raE(\cG)\cap\raE(\cY)|}{|\raE(\cY)|}=d_{edit}(\cG,\cY).
 \end{equation}
 Thus, if $\varphi\colon \cY\to \cX$ is the closest genuine cover to $f\colon \cG\to G(\cX)$, then there is an encoding of them where the normalized Hamming distance is the same as the normalized edit distance. Together with \eqref{proof:edit_smaller_than_hamming}, this proves that the global defects are preserved under this correspondence.
\end{proof}

By combining Claim \ref{claim:encoding_of_covers_by_1-cocyc}, Corollary \ref{cor:local_defect_preserved} and Claim \ref{claim:global_def_preserved}, Theorem \ref{thm:main1} is proven.

\subsection{Homomorphism stability is equivalent to  cocycle stability}\label{sec:hom_equiv_to_cocycle}
In this section we prove Theorems \ref{thm:main2} and \ref{thm:main3}.
Like in the previous section, there is a  classical correspondence between presentations and polygonal complexes. One direction, from presentations to complexes, is via the \emph{presentation complex} of the group (cf. Corollary 1.28, page 52, in \cite{Hatcher_Alg_Top}). 
The other direction, from complexes to presentaions, is via the presentation of the fundamental group associated with the reatraction of a spanning tree. We now recall these constructions in detail.

\begin{defn}\label{defn:presentation_complex}
    Given a presentation $\Gamma =\langle S|R\rangle$, one can define the presentation complex $\cX_{\langle S|R\rangle}$ to be the following polygonal complex: It has a single vertex $*$, and an edge $e(s)$ for every generator $s\in S$. Then, for every $r= s_1^{\eps_1}\cdot...\cdot s_\ell^{\eps_\ell}\in R$, we add a polygon (together with all its orientations) $\pi(r)=e(s_1)^{\eps_1}...e(s_\ell)^{\eps_\ell}$, where $e(s)^{-1}=\overline{e(s)}$ and $e(s)^{1}=e(s)$. Note that since there is only one vertex, this is a closed path regardless of $r$. 
\end{defn}

\begin{proof} [Proof of Theorem \ref{thm:main2}]
    Every function $f\colon S\to\Sym(n)$ can be extended to a $1$-cochain  $\alpha_f$ on $\cX_{\langle S|R\rangle}$ in an antisymmetric way. Namely, $\alpha_f(e(s))=f(s),\alpha_f(\overline{e(s)})=f(s)^{-1}$. Furthermore, for $r= s_1^{\eps_1}\cdot...\cdot s_\ell^{\eps_\ell}\in R$ we get $$\delta\alpha_f(\pi(r))=\prod_{i=1}^{\ell}\alpha_f(e(s_i))^{\eps_i}=\prod_{i=1}^{\ell}f(s_i)^{\eps_i}=f(r).$$  Hence
    \[
    \begin{split}
        \defect_{cocyc}(\alpha_f)&=\Ex_{[\pi]\in P(\cX_{\langle S|R\rangle})}[d_h(\delta\alpha_f(\pi),\Id)]\\
        &=\Ex_{r\in R}[d_h(\delta\alpha_f(\pi(r)),\Id)]\\
        &=\Ex_{r\in R}[d_h(f(r),\Id)]\\
        &=\defect_{hom}(f).
    \end{split}
    \]
    In particular, this correspondence $f\leftrightarrow \alpha_f$ associates the homomorphisms $\Hom(\Gamma,\Sym)$  with the cocycles $Z^1(\cX_{\langle S|R\rangle},\Sym)$.
    
Let $\varphi\colon S\to \Sym(N)$ be another function, and let $\alpha_\varphi$ be the associated $1$-cochain on $\cX_{\langle S|R\rangle}$. Then,
\[
\begin{split}
    d_h(\alpha_f,\alpha_\varphi)&=\Ex_{e\in \raE(\cX_{\langle S|R\rangle})}[d_h(\alpha_f(e),\alpha_\varphi(e))]\\
    &=\Ex_{s\in S}[d_h(\alpha_f(e(s)),\alpha_\varphi(e(s)))]\\
    &=\Ex_{s\in S}[d_h(f(s),\varphi(s))]\\
    &=d_h(f,\varphi).
\end{split}
\]
Therefore, the global defects are preserved. 
\end{proof}

We now recall the presentation of the fundamental group associated with the retraction of a tree. This presentation can be attained by van Kampen's Theorem (See Chapter 1.2 in \cite{Hatcher_Alg_Top}).
\begin{defn}\label{defn:pres_of_fundamental_group}
    Let $\cX$ be a connected polygonal complex.  Choose a base point $*\in V(\cX)$  and a spanning tree $T$ in $\cX$. There is a standard presentation $\langle S|R\rangle$ of the fundamental group $\pi_1(\cX,*)$  with respect to $T$.
 For the generator set $S$, we choose a representative $e$ out of every unoriented edge $\{e,\bar e\}\in E(\cX)$ which is outside of the spanning tree $T$. 
For relations, we take a single  representative $\pi$ from each unoriented polygon $[\pi]\in P(\cX)$. Now, the way we interpret $\pi$ as a word in $S$ is by replacing each $\bar e$ for which $e\in S$ by $e^{-1}$, and by deleting every edge $e\in T$. 
\end{defn}

\begin{proof} [Proof of Theorem \ref{thm:main3}]
     Let $\alpha\colon  \raE(\cX)\to \Sym(n)$ be a $1$-cochain of $\cX$. We now define a $0$-cochain $\beta$ which depends on $\alpha,*$ and $T$:
For every vertex $y\in V(\cX)$, let $\pi_y$ be the unique path in $T$ from $y$ to the base point $*$. Define $\beta\in C^0(\cX,\Sym)$ by $$\beta(y)=\alpha(\pi_y).$$ Let $y\xrightarrow{e}z\in T$ be an edge in the spanning tree oriented towards the basepoint $*$ (namely, the path between $*$ and $y$ in $T$ passes through $z$). Hence, $\pi_y=e\pi_z$. Therefore
\[
\begin{split}
    \beta.\alpha(e)&=\beta(y)^{-1}\alpha(e)\beta (z)\\
    &=\alpha(\pi_y)^{-1}\alpha(e)\alpha(\pi_z)\\
    &=\alpha(e\pi_z)^{-1}\alpha(e\pi_z)\\
    &=\Id.
\end{split}
\]
Since the action of the $0$-cochains on the $1$-cochains does not effect the local or global defects of the $1$-cochain, we can assume the $\alpha$ we were given was already $\beta.\alpha$. Namely, it is the identity for every $e$ in the tree. Then, running the homomorphism tester (Algorithm \ref{alg:hom_test}) on $\alpha|_S\colon S\to \Sym(n)$ 
is the same as running the $1$-cocycle tester (Algorithm \ref{alg:Cocycle_test}) on $\alpha$. Thus, 
\begin{equation}\label{eq:local_defects_the_same_hom_cocyc}
   \defect_{hom}(\alpha|_S)=\defect_{cocyc}(\alpha).
\end{equation}
But, the global defects may be different. If $f\colon S\to \Sym(N)$ is the closest homomorphism  to $\alpha|_S$, then we can extend $f$ to be the identity on the tree $T$ and it becomes a $1$-cocycle of $\cX$. Furthermore
\[
\begin{split}
    d_h(f,\alpha|_S)&=\Ex_{e\notin T}[d_h(f(e),\alpha(e))]\\
    &=\frac{|E(\cX)|}{|E(\cX)|-|T|}\Ex_{e\in \raE(\cX)}[d_h(f(e),\alpha(e))]\\
    &\geq d_h(f,\alpha).
\end{split}
\]
Hence, $\Defect_{hom}(\alpha|_S)\geq \Defect_{cocyc}(\alpha)$. Together with \eqref{eq:local_defects_the_same_hom_cocyc}, this proves clause $(1)$ of the Theorem.
\\

For clause $(2)$ we apply a naive operation, which would seem very wasteful at first glance. In Remark \ref{rem:thm3_is_tight}, we show that clause $(2)$ is essentially tight, and thus this process is optimal for the worst case scenarios. 

    Let $\varphi\colon \raE(\cX)\to \Sym(N)$ be the closest cocycle to $\alpha$. Even though $\alpha$ is trivial on $T$, and thus $\alpha|_S$ is in the potential homomorphisms from $\pi_1(\cX,*)$ to $\Sym(n)$, $\varphi$ is \textbf{not necessarily trivial on $T$}. To amend this, we will apply  on $\varphi$ a similar operation to the one we applied on $\alpha$ in the beginning of this proof. Namely, there is a $0$-cochain $\beta'\colon V(\cX)\to \Sym(N)$ such that $\beta'.\varphi$ is trivial on $T$. How far can $\beta'.\varphi|_S$ be from $\alpha|_S$? Recall that $\beta'.\varphi|_S(y\xrightarrow{e}z)=\varphi(\pi_y)^{-1}\varphi(e)\varphi(\pi_z)$. On the other hand, $\alpha|_S(y\xrightarrow{e}z)=\alpha(e)=\alpha(\pi_y)^{-1}\alpha(e)\alpha(\pi_z)$, since $\alpha$ is trivial in the tree. Let $\pi_y=y\xrightarrow{e_1}...\xrightarrow{e_k}*$ and $\pi_z=z\xrightarrow{e'_1}...\xrightarrow{e'_m}*$ be the paths along the tree. Then, 
    \[
    \begin{split}
d_h(\alpha|_S(e),\beta'.\varphi|_S(e))&=d_h(\alpha(\pi_y)^{-1}\alpha(e)\alpha(\pi_z),\varphi(\pi_y)^{-1}\varphi(e)\varphi(\pi_z))\\
&=d_h(\alpha(\bar e_k)...\alpha(\bar e_1)\alpha(e)\alpha(e'_1)...\alpha(e'_m),\varphi(\bar e_k)...\varphi(\bar e_1)\varphi(e)\varphi(e'_1)...\varphi(e'_m))\\
&\leq d_h(\alpha(e),\varphi (e))+\sum_{i=1}^k d_h(\alpha(e_i),\varphi(e_i))+\sum_{j=1}^m d_h(\alpha(e'_j),\varphi(e'_j))\\
&=(\diamond).
    \end{split}
    \]
    Note that in the last sum, no oriented edge in $\raE(\cX)$ appears more than once. Hence,
    \[
\begin{split}
    (\diamond)\leq \sum_{e\in \raE(\cX)}d_h(\alpha(e),\varphi(e))=|\raE(\cX)|d_h(\alpha,\varphi).
\end{split}
    \]
Therefore, 
\[
\begin{split}
    d_h(\alpha|_S,\beta'.\varphi|_S)&=\Ex_{e\notin T}[d_h(\alpha|_S(e),\beta'.\varphi|_S(e))]\\
    &\leq \Ex_{e\notin T}\left[|\raE(\cX)|d_h(\alpha,\varphi)\right]\\
    &=|\raE(\cX)|d_h(\alpha,\varphi),
\end{split}
\]
which finishes the proof.
\end{proof}

 \begin{cor}\label{cor:other_Gammas}
     Note that the proofs of Theorems \ref{thm:main2} and \ref{thm:main3} apply in a much more general setup. We could have defined homomorphism stability with $(G,d)$-coefficients: Let $\Gamma \cong\langle S|R\rangle$ be a finite presentation. Let $f\colon S\to G$ be a function, which extends to a homomorphism from $\cF_S$. By letting 
     \[
     \defect_{hom,G}(f)=\Ex_{r\in R}[d(f(r),\Id_G)]
     \]
     and 
     \[
\Defect_{hom,G}(f)=\min\left\{\Ex_{s\in S}[d(f(s),\varphi(s))]\ \middle|\ \varphi \in Hom(\Gamma,G)\right\},
     \]
     we can define that $\langle S|R\rangle$ is $\rho$-homomorphism stable with $(G,d)$-coefficients whenever 
     \[
        \Defect_{hom,G}(f)\leq \rho(\defect_{hom,G}(f)).
     \]
     The proofs of Theorems \ref{thm:main2} and \ref{thm:main3} show that this condition is equivalent to cocycle stability in the first dimension with $(G,d)$-coefficients. Since variants  of this type are extensively studied --- Hilbert--Schmidt stablity \cites{HadwinShulman,BeckerLubotzky}, Frobenius stability \cites{DGLT,LubotzkyOppenheim} and so on --- our framework may allow one to tackle them as well. 
 \end{cor}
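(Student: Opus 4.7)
The plan is to observe that the arguments proving Theorems \ref{thm:main2} and \ref{thm:main3} never exploited features particular to the symmetric group equipped with the normalized Hamming distance; they only relied on (i) the group structure on the coefficient set, (ii) the bi-invariance of the metric $d$, and (iii) the fact that the distance between two cochains is defined as the uniform average of pointwise distances on cells. So the task reduces to retracing both proofs with $\Sym(n)$ replaced by $(G,d)$, and checking at each step that $d_h$ can be swapped out for $d$ without loss.

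First, for the $(G,d)$-analogue of Theorem \ref{thm:main2}, I would define $\alpha_f \in C^1(\cX_{\langle S|R\rangle},G)$ from $f\colon S \to G$ by $\alpha_f(e(s)) = f(s)$ and $\alpha_f(\overline{e(s)})=f(s)^{-1}$, exactly as before. Since $\cX_{\langle S|R\rangle}$ has one vertex, one edge per $s \in S$, and one polygon $\pi(r)$ per $r \in R$, the identity $\delta\alpha_f(\pi(r)) = f(r)$ still holds, so $\defect_{cocyc}(\alpha_f) = \Ex_{r \in R}[d(f(r),\Id)] = \defect_{hom,G}(f)$. The correspondence $f \leftrightarrow \alpha_f$ is a bijection $G^S \to C^1(\cX_{\langle S|R\rangle},G)$ sending $\Hom(\Gamma,G)$ onto $Z^1(\cX_{\langle S|R\rangle},G)$, and the distance equality $d(\alpha_f,\alpha_\varphi) = \Ex_{s \in S}[d(f(s),\varphi(s))]$ is immediate, so local and global defects match. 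This step is routine.

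For the $(G,d)$-analogue of Theorem \ref{thm:main3}, I would first check that the action of $0$-cochains on $1$-cochains defined by $\beta.\alpha(x \xrightarrow{e} y) = \beta(x)^{-1}\alpha(e)\beta(y)$ preserves both defects: on a polygon $\pi$ based at $x$, we have $\beta.\alpha(\pi) = \beta(x)^{-1}\alpha(\pi)\beta(x)$, and bi-invariance of $d$ gives $d(\beta.\alpha(\pi),\Id) = d(\alpha(\pi),\Id)$, so $\defect_{cocyc}$ is invariant; a similar bi-invariance argument shows the global defect is too. With this in hand, the gauge reduction producing an $\alpha$ trivial on the spanning tree $T$ goes through verbatim, and clause $(1)$ follows: extending the closest homomorphism $f$ to be trivial on $T$ gives a $1$-cocycle, and the averaging over $\raE(\cX)$ rather than $S$ only decreases distances.

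The one place that genuinely requires bi-invariance is the telescoping estimate in clause $(2)$. After conjugating the optimal cocycle $\varphi$ by a suitable $0$-cochain $\beta'$ to make it trivial on $T$, I need the inequality
\begin{equation*}
d\bigl(\alpha(\bar e_k)\cdots\alpha(e)\cdots\alpha(e'_m),\ \varphi(\bar e_k)\cdots\varphi(e)\cdots\varphi(e'_m)\bigr) \le d(\alpha(e),\varphi(e)) + \sum_{i=1}^k d(\alpha(e_i),\varphi(e_i)) + \sum_{j=1}^m d(\alpha(e'_j),\varphi(e'_j)).
\end{equation*}
This is exactly the standard word-metric fellow-traveler bound, obtained by inserting one factor at a time and invoking both left- and right-invariance of $d$; I expect this to be the only step requiring any thought, and even it is a one-line computation once bi-invariance is flagged. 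The averaging over edges $e \notin T$ then yields the factor $|\raE(\cX)|$ in front of $d(\alpha,\varphi)$, concluding clause $(2)$. The main \emph{conceptual} obstacle, such as it is, is simply to identify which ambient hypotheses (bi-invariance and the averaged form of the distance) are doing the work; once isolated, nothing else needs modification.
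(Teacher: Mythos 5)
Your proposal is correct and follows exactly the route the paper intends: the corollary is stated as an observation that the proofs of Theorems \ref{thm:main2} and \ref{thm:main3} go through verbatim, and you have correctly verified this by tracing both proofs with $\Sym(n)$ replaced by a general $(G,d)$ and isolating the two places where bi-invariance of $d$ is what makes the argument work (invariance of both defects under the action \eqref{eq:action_0-coch_on_1-coch} of $0$-cochains, and the telescoping bound in clause (2) of Theorem \ref{thm:main3}). One very minor point you could make explicit: in the telescoping step the terms appearing are $d(\alpha(\bar e_i),\varphi(\bar e_i))$ rather than $d(\alpha(e_i),\varphi(e_i))$, and the identification of the two uses $d(g^{-1},h^{-1})=d(g,h)$, which is itself a consequence of bi-invariance (take $a=h$, $b=g$ in $d(agb,ahb)=d(g,h)$), so nothing extra is needed.
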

 
In Part II of this paper, we prove the following fact:

\begin{fact}[\cite{CL_part2}]\label{fact:complete_complex_stable}
    Let $\cX$ be the \emph{complete complex} on $d$ vertices. Namely, it has a vertex set $[d]$, an edge set $\{ij\mid i\neq j\in [d]\}$ and a triangle set $\{ijk\mid i\neq j\neq k\in [d]\}$. Then $\cX$ is $\rho$-cocycle stable with $\rho(\eps)=\eps$.
\end{fact}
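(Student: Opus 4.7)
The plan is to use the fact that the complete complex $\cX$ is contractible (its fundamental group is trivial, in the sense of Fact \ref{fact:vanishing_of_1_cohomology} and beyond, since the $1$-skeleton already fills up via the triangles). Hence $Z^1(\cX,\Sym) = B^1(\cX,\Sym)$, and it suffices, given $\alpha \in C^1(\cX,\Sym(n))$, to produce a $0$-cochain $\beta$ whose coboundary $\delta\beta$ is close to $\alpha$ in the normalized Hamming metric.

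The natural candidate, for a choice of basepoint $* \in [d]$, is $\beta_*(*) = \Id$ and $\beta_*(y) = \alpha(*y)$ for $y \neq *$. By construction $\delta\beta_*$ agrees with $\alpha$ on every edge through $*$. For an edge $xy$ with $x,y \neq *$, the ratio
\[
\alpha(xy)\,\delta\beta_*(xy)^{-1} = \alpha(xy)\,\alpha(y*)\,\alpha(*x) = \delta\alpha(\pi_{xy*})
\]
telescopes around the triangle $\pi_{xy*} = x \to y \to * \to x$, and bi-invariance of the Hamming distance gives $d_h(\alpha(xy),\delta\beta_*(xy)) = d_h(\delta\alpha(\pi_{xy*}),\Id)$. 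So $d_h(\alpha,\delta\beta_*)$ is exactly the average defect of $\alpha$ over the $\binom{d-1}{2}$ triangles containing $*$, weighted by the fraction of edges that avoid $*$.

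A single basepoint is insufficient, because the defect on triangles through some specific $*$ could exceed the global average $\Vert\delta\alpha\Vert$. The key step is to average $d_h(\alpha,\delta\beta_*)$ over the $d$ possible basepoints. In the resulting double sum, each unoriented triangle $[\pi] \in P(\cX)$ appears exactly three times (once for each choice of apex), and using $|E(\cX)| = \binom{d}{2}$ and $|P(\cX)| = \binom{d}{3}$ one gets
\[
\Ex_{* \in [d]}\bigl[d_h(\alpha,\delta\beta_*)\bigr] \;=\; \frac{3}{d\binom{d}{2}} \sum_{[\pi] \in P(\cX)} d_h(\delta\alpha(\pi),\Id) \;=\; \frac{3\binom{d}{3}}{d\binom{d}{2}}\,\Vert\delta\alpha\Vert \;=\; \frac{d-2}{d}\,\Vert\delta\alpha\Vert.
\]
By pigeonhole some $*$ achieves $d_h(\alpha,\delta\beta_*) \leq \tfrac{d-2}{d}\Vert\delta\alpha\Vert \leq \Vert\delta\alpha\Vert$, and since $\delta\beta_*$ is a coboundary (and hence a cocycle) this yields $\Defect_{cocyc}(\alpha) \leq \defect_{cocyc}(\alpha)$, proving $\rho$-cocycle stability with $\rho(\eps) = \eps$.

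I do not expect any real obstacles. The argument is essentially a one-line averaging once the candidate $\beta_*$ is written down; the only things to check carefully are bi-invariance of the Hamming distance (standard) and the combinatorial counting that each triangle is hit three times by the averaging. The output rate $\tfrac{d-2}{d}$ is in fact slightly sharper than the claimed $\eps$, though the averaging method does not directly give a $d$-independent improvement of the constant.
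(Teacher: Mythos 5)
Your argument is correct and complete. The telescoping identity $\alpha(xy)\,\delta\beta_*(xy)^{-1}=\delta\alpha(\pi_{xy*})$, bi-invariance of $d_h$ on $\Sym(n)$, and the observation that $d_h(\delta\alpha(\pi),\Id)$ depends only on $[\pi]$ (because $d_h(\cdot,\Id)$ is conjugation- and inverse-invariant, so cyclic shifts and reversal of the polygon do not change it) are all sound, and the counting $3\binom{d}{3}/\bigl(d\binom{d}{2}\bigr)=\tfrac{d-2}{d}$ is right. The pigeonhole step then gives a coboundary $\delta\beta_*$ (hence a cocycle, by the exactness computation $\delta^2=\Id$ already verified in Section 5.1) within distance $\tfrac{d-2}{d}\Vert\delta\alpha\Vert\le\Vert\delta\alpha\Vert=\defect_{cocyc}(\alpha)$ of $\alpha$, which is exactly $\rho$-cocycle stability with $\rho(\eps)=\eps$.

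One caveat for the comparison you were asked to draw: the paper does not prove Fact~\ref{fact:complete_complex_stable} in Part~I at all --- it is cited from the companion paper \cite{CL_part2}, so there is no internal proof here to measure yours against. What you have produced is a self-contained proof via averaging over cone apices $*$, which is the standard cone-and-average argument for coboundary expansion of the complete complex, here adapted to the normalized Hamming metric on $\Sym(n)$. Two small remarks. First, the ``Hence $Z^1=B^1$'' preamble, while true, is not actually needed: you only use that $\delta\beta_*\in B^1\subseteq Z^1$, which is just exactness. Second, you correctly note the rate is $\tfrac{d-2}{d}$; it may be worth stating explicitly that the constant approaches $1$ as $d\to\infty$, so this method matches but does not beat the advertised identity rate asymptotically.
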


\begin{rem}\label{rem:thm3_is_tight}
    We now show that the bound in clause $(2)$ of Theorem \ref{thm:main3} is, up to a factor of $12$, tight. Let us consider the complete complex $\cX$ as in Fact  \ref{fact:complete_complex_stable}. Choose the spanning tree $T$ to be the path $1\to2\to3\to ...\to d$. Recall that $S=\raE(\cX)\setminus T$, and let $\alpha\colon S\to \FF_2 \cong \Sym(2)$ be the following: 
    \[
    \forall i\xrightarrow{e}j\notin T\ \colon\ \ \alpha(e)=\begin{cases}
        1 & i\leq \nicefrac{d}{2}, j>\nicefrac{d}{2},\\
         1 & j\leq \nicefrac{d}{2}, i>\nicefrac{d}{2},\\
         0 & \textrm{otherwise}.
    \end{cases}
    \]
    Note that if we extend $\alpha$  to be trivial on $T$, it is almost the characteristic function of the cut of $\cX$ separating $\{1,...,\lfloor\nicefrac{d}{2}\rfloor\}$ and the rest of $[d]$, except $\alpha(\lfloor\nicefrac{d}{2}\rfloor\to \lfloor\nicefrac{d}{2}\rfloor+1)=0$ and not $1$ (since this edge is part of $T$). It can be checked that the only triangles that are violated by $\alpha$ are those which contain the edge $\lfloor\nicefrac{d}{2}\rfloor\to \lfloor\nicefrac{d}{2}\rfloor+1$, and there are $d-2$ of them. Hence, $\defect_{cocyc}(\alpha)=\defect_{hom}(\alpha)=\frac{d-2}{\binom{d}{3}}=\frac{6}{d(d-1)}.$ 
    On the other hand, since $\pi_1(\cX,*)=1$, there  only  homomorphisms from $S$ to $\Sym(n)$ are the constant identity ones, and clearly the closest one is the one into $\Sym(2)\cong\FF_2$ which is the constant $0$ function. 
    Therefore, $$\Defect_{hom}(\alpha)=\Ex_{ij\notin T}[\alpha(ij)=1]=\frac{ \lfloor\nicefrac{d}{2}\rfloor\lceil\nicefrac{d}{2}\rceil-1}{\binom{d-1}{2}-d+1}\approx \frac{1}{2}.$$
    All in all, 
    \[
\frac{\Defect_{hom}(\alpha)}{\defect_{hom}(\alpha)}\approx \frac{\frac{1}{2}}{\frac{6}{d(d-1)}}=\frac{|\raE(\cX)|}{12},
    \]
    which, together with Fact \ref{fact:complete_complex_stable}, proves that the cocycle stability rate of $\cX$ is at least $\frac{|\raE(\cX)|}{12}$ better than the homomorphism stability of the presentation of $\pi_1(\cX,*)$ obtained by retracting $T$.

    This example is a bit weak, since there are better spanning trees that will yield a better (though, still quite a bad) deterioration parameter. We wonder whether there is a complex such that \textbf{all} of its spanning trees produce presentations with homomorphism stability rate deteriorating by $\Omega(|\raE(\cX)|)$ compared to the cocycle stability rate.
\end{rem}

\section{Alternative distributions}  \label{sec:alt_dist}
Since we presented the property testing framework in a very general fashion in Section \ref{sec:Prop_testing}, it was not natural to discuss various normalization and weighting systems other than the uniform ones. But, for other problems (cf. \cite{CL_part2}), there is a natural way of extending the definitions to include weights.

Given a polygonal complex $\cX$, we can provide with it probability distributions $\mu_i$ on $\cX(i)$. Then, the distance between the cochains $\alpha\colon \overrightarrow{\cX}(i)\to \Sym(n)$ and $\varphi\colon \overrightarrow{\cX}(i)\to \Sym(N)$ would be
\begin{equation}\label{eq:weighted_distance_cochains}
    d_h(\alpha,\varphi)=\Ex_{[x]\sim \mu_i}[d_h(\alpha(x),\varphi(x))].
\end{equation}
Note that \eqref{eq:distance_between_cochains}  is a special case of \eqref{eq:weighted_distance_cochains} by choosing $\mu_i$ to be the uniform distributions on the un-oriented cells. Furthermore, the edit distance between $\cX$-labeled graphs can be weighted according to $\mu_1$. The sampling procedure of a polygon in Algorithms \ref{alg:Covering_test} and \ref{alg:Cocycle_test} (with $i=1$) can be taken to be according to $\mu_2$. 
The correspondence of Theorem \ref{thm:main1} which was proved in Section \ref{sec:equivalence_covering_cocycles} is still valid in this generalized setup, as long as the weights on the complex $\cX$ are fixed.

Similarly, one can associate a measure $\mu_S$ on $S$ and $\mu_R$ on $R$. Then, define the distance between $f\colon S\to \Sym(n)$ and $\varphi\colon S\to \Sym(N)$ in a similar fashion to \eqref{eq:weighted_distance_cochains},
\[
d_h(f,\varphi)=\Ex_{s\sim \mu_S}[d_h(f(s),\varphi(s))].
\]
Furthermore, we can sample the relation in Algorithm \ref{alg:hom_test} according to $\mu_R$. In this generalized framework, Theorem \ref{thm:main2} can be proved as in Section \ref{sec:hom_equiv_to_cocycle} by associating with $X_{\langle S|R\rangle}$ the measures $\mu_1=\mu_S$ on its edges and $\mu_2=\mu_R$ on its polygons. There is a weighted version of Theorem \ref{thm:main3} as well, but we omit it from this discussion.

Though this generalized form does not imply any relation between the $\mu_i$'s, there is a natural choice which behaves well when perturbations are applied to the studied object. Let us focus on cocycle stability, which we now know is equivalent to the others under the appropriate corresondences of Theorems \ref{thm:main1},\ref{thm:main2} and \ref{thm:main3}. For an edge $e$ and a polygon $\pi=e_1...e_\ell$, the occurrence number of $e$ in $\pi$ is 
\[
OC(e\prec \pi)=|\{i\in \{1,...,\ell\}\mid [e_i]=[e] \}|.
\]
Note that the occurence number is independent of the chosen orientation of the polygon or edge. 
Given a distribution $\mu_2$ on un-oriented polygons of $\cX$, let us define the following weighting system on edges
\[
\begin{split}
    \forall [e]\in E(\cX)\ \colon \ \ w([e])&=\Ex_{[\pi]\sim \mu_2}\left[OC(e\prec \pi)\right].
\end{split}
\]
Namely, the weight of $[e]$ is the expected number of times it appears in a polygon sampled according to $\mu_2$.
Then, let 
\begin{equation}\label{eq:distribution_by_weights_on_polygons}
    \begin{split}
    \forall [e]\in E(\cX)\ \colon \ \ \mu_1([e])&=\frac{w([e])}{\sum_{[e']\in E(\cX)}w(e')}.
\end{split}
\end{equation}
The reason this weighting system is natural is that it behaves well under small perturbations, as the following claim shows.

\begin{claim}\label{claim:weighting_system_perturbation}
    Let $\cX$ be a polygonal complex,  let $\mu_2$ be some distribution on its polygons, and let $L$ be the maximal length of a polygon in $\cX$. Choose $\mu_1$ to be the distribution from \eqref{eq:distribution_by_weights_on_polygons}. Let $\alpha\colon \raE(\cX)\to \Sym(n)$ and $\varphi\colon \raE(\cX)\to \Sym(N)$ be two $1$-cochains. Then, $$d_h(\delta\alpha,\delta\varphi)\leq\Ex_{[\pi]\sim \mu_2}[\ell(\pi)]\cdot d_h(\alpha,\varphi) \leq L\cdot d_h(\alpha,\varphi).$$
    In particular, if two cochains are close, then their local defects are also close.
\end{claim}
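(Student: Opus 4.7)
The plan is to reduce the claim to a per-polygon triangle inequality followed by a Fubini-type swap between the expectation over polygons and the sum over edges, at which point the definition \eqref{eq:distribution_by_weights_on_polygons} of $\mu_1$ does exactly the right bookkeeping.

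First I would fix an oriented polygon $\pi = e_1 \cdots e_{\ell(\pi)}$ and estimate $d_h(\delta\alpha(\pi),\delta\varphi(\pi))$. Since $\delta\alpha(\pi)$ and $\delta\varphi(\pi)$ are just the products $\alpha(e_1)\cdots\alpha(e_{\ell(\pi)})$ and $\varphi(e_1)\cdots\varphi(e_{\ell(\pi)})$, and the normalized Hamming distance on $\Sym$ is bi-invariant (after embedding the smaller permutation into the larger symmetric group, which is the usual trick behind the definition of $d_h$ with errors in \eqref{eq:permutation_normalized_Hamming_distance}), a standard telescoping argument gives
\[
d_h\!\left(\alpha(e_1)\cdots\alpha(e_{\ell(\pi)}),\,\varphi(e_1)\cdots\varphi(e_{\ell(\pi)})\right)\;\le\;\sum_{i=1}^{\ell(\pi)} d_h(\alpha(e_i),\varphi(e_i)).
\]
This is really the only non-cosmetic step; everything else is just reorganizing sums. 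I would prove it by a one-line induction on $\ell(\pi)$, replacing one factor at a time and using bi-invariance to cancel the identical prefixes and suffixes.

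Next I would rewrite the right-hand side in terms of occurrence numbers. Because $d_h(\alpha(e),\varphi(e)) = d_h(\alpha(\bar e),\varphi(\bar e))$ (both cochains are antisymmetric and $d_h$ is invariant under inversion), the quantity $d_h(\alpha(e),\varphi(e))$ depends only on the unoriented edge $[e]$, so
\[
\sum_{i=1}^{\ell(\pi)} d_h(\alpha(e_i),\varphi(e_i)) \;=\; \sum_{[e]\in E(\cX)} OC(e\prec \pi)\cdot d_h(\alpha(e),\varphi(e)).
\]
Taking the expectation over $[\pi]\sim\mu_2$ and interchanging the (finite) sum with the expectation lets the occurrence numbers collapse into the weights: $\mathbb{E}_{[\pi]\sim\mu_2}[OC(e\prec\pi)] = w([e])$. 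Setting $W = \sum_{[e]}w([e]) = \mathbb{E}_{[\pi]\sim\mu_2}[\ell(\pi)]$, the definition \eqref{eq:distribution_by_weights_on_polygons} gives $\mu_1([e]) = w([e])/W$, so
\[
\mathbb{E}_{[\pi]\sim\mu_2}\!\left[\sum_{[e]} OC(e\prec\pi)\,d_h(\alpha(e),\varphi(e))\right] = W\cdot \sum_{[e]}\mu_1([e])\,d_h(\alpha(e),\varphi(e)) = W\cdot d_h(\alpha,\varphi).
\]

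Combining the two steps yields $d_h(\delta\alpha,\delta\varphi) \le W\cdot d_h(\alpha,\varphi) = \mathbb{E}_{[\pi]\sim\mu_2}[\ell(\pi)]\cdot d_h(\alpha,\varphi)$, and the final bound by $L$ is immediate from $\ell(\pi)\le L$. The only subtlety to be careful about is the treatment of reverse orientations and different index sets when $\alpha$ and $\varphi$ take values in $\Sym(n)$ and $\Sym(N)$ with $n\ne N$; this is handled exactly as in the definition of $d_h$ in \eqref{eq:permutation_normalized_Hamming_distance}, by embedding $\Sym(n) \hookrightarrow \Sym(N)$ via the identity on the extra coordinates, which preserves both bi-invariance and the triangle inequality used in the telescoping step.
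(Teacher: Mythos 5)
Your proposal is correct and follows essentially the same argument as the paper: the per-polygon triangle inequality $d_h(\alpha(e_1)\cdots\alpha(e_\ell),\varphi(e_1)\cdots\varphi(e_\ell))\le\sum_j d_h(\alpha(e_j),\varphi(e_j))$ via bi-invariance, regrouping the sum by unoriented edges using $OC(e\prec\pi)$, swapping the finite sum with $\Ex_{\mu_2}$ to produce the weights $w([e])$, and then normalizing by $W=\Ex_{\mu_2}[\ell(\pi)]\le L$ to recover $\mu_1$. The extra remarks you make --- spelling out the telescoping induction, noting that $d_h(\alpha(e),\varphi(e))$ is orientation-independent, and handling $n\ne N$ via the standard embedding into $\Sym(N)$ --- are sound and merely make explicit details the paper leaves implicit.
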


\begin{proof}
    Let us calculate,
    \[
    \begin{split}
        d_h(\delta\alpha,\delta\varphi)&=\Ex_{[e_1...e_\ell]\sim \mu_2}[d_h(\delta\alpha(e_1...e_\ell),\delta\varphi(e_1...e_\ell))]\\
        &=\Ex_{[e_1...e_\ell]\sim \mu_2}[d_h(\alpha(e_1)...\alpha(e_\ell),\varphi(e_1)...\varphi(e_\ell))]\\
        &\leq \Ex_{[e_1...e_\ell]\sim \mu_2}\left[\sum_{j=1}^\ell d_h(\alpha(e_j),\varphi(e_j))\right]\\
        &=\Ex_{[\pi]\sim \mu_2}\left[\sum_{[e]\in E(\cX)} OC(e\prec \pi)d_h(\alpha(e),\varphi(e))\right]\\
        &=\sum_{[e]\in E(\cX)} \underbrace{\Ex_{[\pi]\sim \mu_2}\left[OC(e\prec \pi)\right]}_{w([e])}d_h(\alpha(e),\varphi(e))\\
        &=\left(\sum_{[e']\in E(\cX)}w(e')\right)\left(\sum_{[e]\in E(\cX)} \mu_1([e])d_h(\alpha(e),\varphi(e))\right)
    \end{split}
    \]
    Now, on the one hand, $$\sum_{[e]\in E(\cX)} \mu_1([e])d_h(\alpha(e),\varphi(e))=\Ex_{[e]\sim \mu_1}[d_h(\alpha(e),\varphi(e))]=d_h(\alpha,\varphi).$$
    On the other hand,
    \[
\sum_{[e']\in E(\cX)}w(e')=\Ex_{[\pi]\sim \mu_2}\left[\sum_{[e']\in E(\cX)}OC(e\prec \pi)\right]=\Ex_{[\pi]\sim \mu_2}[\ell(\pi)]\leq L,
\]
where $\ell(\pi)$ is the length of the polygon $\pi$, defined in Definition \ref{defn:paths}. This finishes the proof.
\end{proof}

\bibliographystyle{plain}
\bibliography{Bib}

\end{document}